\newcommand{\nc}{\newcommand}
\numberwithin{equation}{section}
\newtheorem{thm}{Theorem}[section]
\newtheorem{prop}[thm]{Proposition}
\newtheorem{lem}[thm]{Lemma}
\newtheorem{cor}[thm]{Corollary}
\newtheorem{rem}[thm]{Remark}
\newtheorem{definition}[thm]{Definition}
\newtheorem{example}[thm]{Example}
\newtheorem{dfn}[thm]{Definition}
\newtheorem{conj}[thm]{{\bf Conjecture}}
\nc{\gl}{\mathfrak{gl}}
\nc{\GL}{\mathfrak{GL}}
\nc{\g}{\mathfrak{g}}
\nc{\gh}{\widehat\g}
\nc{\h}{\mathfrak{h}}
\nc{\la}{\lambda}
\nc{\al}{\alpha }
\nc{\be}{\beta }
\nc{\ve}{\varepsilon }
\nc{\om}{\omega }
\nc{\ta}{\theta}
\nc{\ch}{{\mathop {\rm ch}}}
\nc{\Tr}{{\mathop {\rm Tr}\,}}
\nc{\Id}{{\mathop {\rm Id}}}
\nc{\ad}{{\mathop {\rm ad}}}
\nc{\bra}{\langle}
\nc{\ket}{\rangle}
\nc{\x}{{\bf x}}
\nc{\bm}{{\bf m}}
\nc{\bs}{{\bf s}}
\nc{\br}{{\bf r}}
\nc{\bb}{{\bf b}}
\nc{\bk}{{\bf k}}
\nc{\bp}{{\bf p}}
\nc{\pa}{\partial}
\nc{\ld}{\ldots}
\nc{\cd}{\cdots}
\nc{\hk}{\hookrightarrow}
\nc{\T}{\otimes}
\nc{\gr}{\mathrm{gr}}
\nc{\ov}{\overline}
\nc{\cO}{\mathcal O}
\nc{\msl}{\mathfrak{sl}}
\nc{\mgl}{\mathfrak{gl}}
\nc{\U}{\mathrm U}
\nc{\V}{\EuScript V}
\nc{\cL}{\mathcal{L}}
\nc{\Res}{\mathrm{Res\ }}
\newcommand{\bC}{{\mathbb C}}
\newcommand{\bQ}{{\mathbb Q}}
\newcommand{\bZ}{{\mathbb Z}}
\newcommand{\bP}{{\mathbb P}}
\newcommand{\bL}{{\mathbb L}}
\newcommand{\fh}{{\mathfrak h}}
\newcommand{\fg}{{\mathfrak g}}
\newcommand{\fb}{{\mathfrak b}}
\newcommand{\fn}{{\mathfrak n}}
\newcommand{\eM}{\EuScript{M}}
\nc{\Q}{\mathfrak Q}
\newcommand{\gW}{{\mathbb{W}}}
\begin{document}

\title[Vertex algebras and coordinate rings of semi-infinite flags]
{Vertex algebras and coordinate rings of semi-infinite flags}

\author{Evgeny Feigin}
\address{Evgeny Feigin:\newline
Department of Mathematics,\newline
National Research University Higher School of Economics,\newline
Usacheva str. 6, 119048, Moscow, Russia,\newline
{\it and }\newline
Skolkovo Institute of Science and Technology, Skolkovo Innovation Center, Building 3,
Moscow 143026, Russia
}
\email{evgfeig@gmail.com}

\author{Ievgen Makedonskyi}
\address{Ievgen Makedonskyi:\newline
Department of Mathematics, Kyoto University, Oiwake,
Kita-Shirakawa, Sakyo Kyoto 606--8502, Japan
\newline
{\it and} \newline
Department of Mathematics,\newline
National Research University Higher School of Economics,\newline
Usacheva str. 6, 119048, Moscow, Russia
}
\email{makedonskii\_e@mail.ru}

\begin{abstract}
The direct sum of irreducible level one integrable representations of affine Kac-Moody Lie algebra of (affine) type $ADE$
carries a structure of $P/Q$-graded vertex operator algebra.
There exists a filtration on this direct sum studied by Kato and Loktev such that the corresponding graded vector space is a direct sum of
global Weyl modules. The associated graded space with respect to the dual filtration is isomorphic to the homogenous coordinate ring
of semi-infinite flag variety. We describe the ring structure in terms of vertex operators and endow the homogenous coordinate ring
with a structure of $P/Q$-graded vertex operator algebra. We use the vertex algebra approach to derive semi-infinite
Pl\"ucker-type relations in the homogeneous coordinate ring.
\end{abstract}

\maketitle

\section*{Introduction}
Let $\fg$ be a simple complex Lie algebra of type $ADE$ (i.e. simply-laced). Let $P$ and $Q$ be the weight and root lattices
of $\fg$. We denote by $\gh$ the corresponding affine Kac-Moody Lie algebra, which is the central extension of the loop
algebra $\fg\T\bC[t,t^{-1}]$. It is well known (see e.g. \cite{C,Kac1,Kum}) that the level one integrable irreducible
$\gh$-modules are in one-to-one
correspondence with the quotient $P/Q$. In particular, the neutral element in $P/Q$ corresponds to the
basic (vacuum) representation $L(\Lambda_0)$. We note that if $\fg=\msl_n$, then there are $n$ level one integrable
$\gh$-modules: $L(\Lambda_0)$ and one module for each fundamental weight of $\fg$. And if $\fg$ is of type $E_8$, then $P=Q$ and
the only integrable level one module is the basic one.

The basic representation $L(\Lambda_0)$ of $\gh$ carries a structure of vertex operator algebra (VOA) \cite{FK,FB,Kac2}.
In short, a VOA is a correspondence between vectors in $L(\Lambda_0)$ and fields on the same space (the formal
Taylor series with coefficients in ${\rm End}(L(\Lambda_0))$). In particular, the action of the generators of $\gh$
is expressed in terms of the components of certain fields. For our purposes we need a generalization of the standard
definition of vertex operator algebra, the so called $\Gamma$-graded VOA for a finite group $\Gamma$
(this notion generalizes the notion of super vertex algebra) (see
\cite{DL,R,BK,S}). We note that we only use the case $\Gamma=P/Q$. The space of states $\bL$ of the $P/Q$-graded VOA we are working with
is equal to the direct sum of all irreducible level one integrable $\gh$-modules (in particular, $\bL$
is naturally graded by the group $P/Q$).  To an element $A\in \bL$ one
attaches a field $Y(A,z)$ acting on $\bL$. An important difference between the $\Gamma$-graded VOAs and their classical
analogues is that the fields $Y(A,z)$ contain rational (not necessarily integer) powers of $z$.
We use these fields in order to study certain structure on level one modules
which involves Weyl modules: a class of representations of the current algebra $\fg\T\bC[t]$.

The current algebra $\fg\T\bC[t]$ can be seen as an intermediate step between $\fg$ and $\gh$. In particular, its
representation theory (algebraic, geometric and combinatorial) has various deep connections with representation theory
of $\fg$ and that of $\gh$ (see e.g. \cite{CL,FL,FM1,KL}). The current algebra has two important classes of cyclic
representations: local Weyl modules $W_\la$ and
global Weyl modules $\gW_\la$ (\cite{CP,CFK,FMO}). Both classes are labeled by the same parameter $\la\in P_+$ (the dominant cone
of the weight lattice),
but the local modules are finite-dimensional, while the global Weyl modules are infinite-dimensional. These modules
have many interesting features, in particular, they are important for the study of nonsymmetric Macdonald polynomials
\cite{Ch,CI,I,FKM,Kat, ChK} and semi-infinite flag varieties \cite{BF1,BF2,FMi,KNS}.

In a recent paper \cite{KL} the authors studied a filtration $F_\la\subset \bL$ labeled by dominant weights
$\la\in P_+$, where each $F_\la$ is a thick Demazure module (a subspace generated from an extremal weight $\la$ vector
of $\bL$ by the action of $\fg\T\bC[t^{-1}]$). The main result of \cite{KL} for level one representations claims
that the associated graded ${\rm gr} F_\bullet$ is isomorphic to the direct sum of global Weyl modules
$\bigoplus_{\la\in P_+} \gW_\la$ (see also \cite{ChF}). We consider the dual filtration $F^\dag_\la$ on $\bL$
(with respect to the Shapovalov form)
with the property that ${\rm gr}F^\dag_\bullet$ is isomorphic to the space $\gW^*=\bigoplus_{\la\in P_+} \gW^*_\la$.

The advantage of the dual picture is that the space $\gW^*$ carries an important structure of commutative associative algebra
coming from the $\fg\T\bC[t]$ equivariant surjections $\gW^*_{\la}\T \gW^*_{\mu}\to \gW^*_{\la+\mu}$ \cite{Kat,KNS}.
The algebra  $\gW^*$ is the homogeneous coordinate ring of the semi-infinite flag variety of the group $G[[t]]$ with
respect to the Drinfeld-Pl\"ucker embedding (see \cite{FMi,BF1,BF2}). Our goal is to use the $P/Q$-graded vertex operators
structure on $\bL$ in order to study the algebra $\gW^*$.

Let $Y(A,z)=\sum_{n\in\bQ} A_{(n)}z^{-n-1}$ be the Fourier expansion of a vertex operator. We first prove the following
claim:
\[
A_{(n)} B\in F^\dag_{\la+\mu} \text{ for all } A\in F^\dag_\la, B\in F^\dag_\mu, n\in\bQ.
\]
We conclude that the algebra $\gW^*$ carries structure of $P/Q$-graded vertex operator algebra.

The next goal is to describe the filtration $F^\dag_\la$ in terms of the action of vertex operators. In order to state
the result, we consider the natural embeddings $W_{\omega_i}^*\subset\gW^*_{\omega_i}$ for all fundamental weights $\omega_i$.
Using ${\rm gr}F^\dag_\la\simeq \gW^*_\la$, we fix a lift of the fundamental local Weyl modules into $F^\dag_{\om_i}$
and consider $W_{\omega_i}^*$ as sitting inside $F^\dag_{\om_i}\subset \bL$.
Let $\la=\om_{j_1}+\dots+\om_{j_s}$. We prove that
\[
F_\la^\dag={\rm span}\{A^1_{(n_1)}\dots A^s_{(n_s)}|0\rangle, A^i\in W_{\omega_{j_i}}^*\},
\]
where $|0\rangle\in L(\Lambda_0)\subset \bL$ is the vacuum vector (see \cite{A,Li1,Li2} for similar constructions).
As a consequence we endow $\gW^*$ with the structure of $P/Q$-graded VOA.

Our next task is to describe the multiplication in the algebra $\gW^*$ in terms of the vertex operators.
In short, we write down an explicit formula for the multiplication of the elements from $\gW^*_{\omega_i}$
in terms of the products of vertex operators (see \eqref{multiplication} for the explicit formula).
We apply the resulting formula to derive a set of quadratic relations in the algebra $\gW^*$.
We show that in type $A$ these relations are defining and coincide with those given in \cite{FM2}.

The paper is organized as follows. In Section \ref{notation} we collect main notation and introduce main
objects from the theory of simple, current and  affine Lie algebras. We also briefly recall the formalism of semi-infinite
flag varieties and Weyl modules. In Section \ref{gVOA} we collect main definitions from the theory of $\Gamma$-graded vertex
operator algebras. Section \ref{strategy} is devoted to the brief description of our constructions and to the strategy of the proofs.
In Section \ref{Main} we construct vertex operator algebra filtration ($P/Q$-graded analog of the standard filtration)
and write down the multiplication formula in terms of vertex operators. Section \ref{VOAPl} is devoted to the
derivation of the semi-infinite Pl\"ucker-type relations in the homogeneous coordinate ring of the semi-infinite
flag variety. In Section \ref{typeA} we show that in type $A$ the relations derived  in the previous section
are defining.

\section{Semi-infinite flag varieties, Weyl modules and integrable representations}\label{notation}
In this section we collect main definitions and constructions needed in the main body of the paper.
The three main ingredients are the Pl\"ucker-Drinfeld realization of the semi-infinite flag varieties,
Weyl modules and the connection between the global Weyl modules and integrable level one representations
of the affine Kac-Moody Lie algebra described in \cite{KL}.

\subsection{Simple Lie algebras}
Let $\fg$ be a simple Lie algebra of rank $r$ with the Cartan decomposition $\fg=\fb\oplus \fn_-$, where
$\fb=\fn\oplus\fh$ is a Borel subalgebra.
Let $\omega_1,\dots,\omega_r$ and $\al_1,\dots,\al_r$ be fundamental weights and simple roots. We denote by
$P$ and $P_+$ the weight lattice and its dominant cone. In particular, $P_+=\bigoplus_{i=1}^r \bZ_{\ge 0}\om_i$.
Let $Q$ be the root lattice spanned by $\al_i$, $i=1,\dots,r$. We denote by $\Delta=\Delta_+\sqcup \Delta_-$ the set of roots of
$\fg$. For $\al\in \Delta_+$ we denote by $f_\al\in\fn_-$, $e_\al\in\fn$ the corresponding Chevalley generators.
In what follows we use the following partial order on the lattice $P$:
\begin{equation}\label{order}
\mu\leq\lambda\ \text{ iff }\ \lambda-\mu=\sum_{i=1}^ra_i\alpha_i,\ a_i \in \mathbb{Q}_{\ge 0}.
\end{equation}
We note that $\la-\om_i<\la$ for any $\la\in P$, $i=1,\dots,r$. In particular, $0$ is the smallest element in $P_+$.

For a dominant integral weight $\la\in P_+$ we denote by $V_\la$ irreducible $\fg$ module of highest weight $\la$.
In particular, $V_\la$ contains a highest weight vector $v_\la$ such that $\fn v_\la=0$.
Let $(\cdot,\cdot)$ be the Killing form. For a root $\al$ we denote the corresponding coroot by $\al^\vee$.
One has the following (defining) relations in $V_\la$: $f_\al^{(\la,\al^\vee)+1}v_\la=0$.

For a weight $\la\in P_+$ the dual representation $V^*_\la$ is isomorphic to a highest weight module.
We denote the highest weight of this module by $\la^*$. One has the equality $\la^*=-w_0\la$, where
$w_0$ is the longest element of the Weyl group of $\fg$. In particular, for any $i=1,\dots,r$ the weight
$\om_i^*$ is again a fundamental weight.

Let $G\supset B$ be the Lie groups of $\fg$ and $\fb$. Then the flag variety $G/B$ enjoys the Pl\"ucker embedding
$G/B\subset \prod_{i=1}^r \bP(V_{\omega_i})$. The Pl\"ucker embedding gives rise to the family of line bundles
$\cO(\la)$, $\la\in P_+$ on $G/B$ (the tensor products of pull backs of the line bundles $\cO(1)$ on $\bP(V_{\omega_i})$).
The Borel-Weil theorem states that the space of sections $H^0(G/B,\cO(\la))$ is isomorphic to $V_\la^*$. The coordinate ring
of $G/B$ (with respect to the Pl\"ucker embedding) is given by $\bigoplus_{\la\in P_+} V_\la^*$.

\subsection{Weyl modules}
We consider the current algebra $\fg\T\bC[t]$.
For $x\in\fg$, $k\ge 0$ we denote by $xt^k$ the element $x\T t^k\in\fg\T\bC[t]$.
For a dominant integral weight $\la$ let $W_\la$ be the corresponding
local Weyl module of highest weight $\la$ and let $\gW_\la$ be the global Weyl module.
The global Weyl module $\gW_\la$ is cyclic $\fg\T\bC[t]$ module with cyclic vector $w_\la$ of $\fh\T 1$ weight $\la$
and defining relations
\[
\fn\T\bC[t]. w_\la=0,\ (f_\la\T 1)^{(\la,\al^\vee)+1}w_\la=0.
\]
The defining relations for the local Weyl module $W_\la$ differ by the additional relation $\fh\T t\bC[t]. w_\la=0$.
We note that both local and global Weyl modules are graded, where the grading is defined by saying that the degree of
$w_\la$ is zero and $x\T t^i$ increases the degree by $i$. We refer to this grading as $q$-grading.
For a graded $\fg$-module $M=\bigoplus_{k\in\bZ} M[k]$ with
finite-dimensional graded components we denote $\ch_q M=\sum_{k\in \bZ} q^k \ch M[k]$.
Let $(q)_n=\prod_{i=1}^n (1-q^i)$.

We have the following properties of Weyl modules.
\begin{itemize}
\item $W_\la$ is finite-dimensional and $\dim W_\la=\prod_{i=1}^r (\dim W_{\om_i})^{(\la,\al_i^\vee)}$;
\item  $\U(\fg)w_\la\simeq V_\la\subset W_\la$;
\item $\ch \gW_\la=\ch W_\la\prod_{i=1}^r (q)^{-1}_{(\la,\al_i^\vee)}$.
\end{itemize}

\begin{rem}
In type $A$ one has $W_{\om_i}\simeq V_{\om_i}$. However, in general $W_{\om_i}$ is not irreducible as $\fg$ module.
\end{rem}

It was shown in \cite{Kat} that one has $\fg\T\bC[t]$ equivariant embeddings $\gW_{\la+\mu}\subset \gW_\la\T\gW_\mu$,
$w_{\la+\mu}\mapsto w_\la\T w_\mu$. Thus one gets the natural algebra structure on the space
$\bigoplus_{\la\in P_+} \gW_\la^*$.

\subsection{Construction of global Weyl modules}
In this subsection we describe an explicit construction of global Weyl modules and dual global Weyl modules.
Let $U$ be a finite-dimensional $\fg[t]$-module (here and below we use the shorthand notation $\fg[t]=\fg\T\bC[t]$). 
We construct two $\fg[t]$-modules $U[t]$ and $U[t^{-1}]$ satisfying the following properties:
\begin{gather}\label{globalizationproperties1}
\ch U[t]=\frac{1}{1-q}\ch(U),\ \ch U[t^{-1}]=\frac{1}{1-q^{-1}}\ch U;\\ \label{globalizationproperties2}
\text{ (co)cyclicity of } U\text{ implies  (co)cyclicity of } (U[t^{-1}]) U[t];\\
(U^*)[t^{-1}]=(U[t])^*\label{globalizationproperties3}.
\end{gather}

We consider the vector space $U[t,t^{-1}]:=U \otimes \mathbb{C}[t,t^{-1}]$ and define the following action of $\fg[t]$ on this space:
for
any $m\in \bZ_{\ge 0}$, $k\in\bZ$, $x \in \fg$, $u \in U$:

\begin{equation}\label{globalization}
xt^m.u \otimes t^k=\sum_{j=0}^m\binom{m}{j}(xt^ju)\otimes t^{m+k-j}.
\end{equation}

It is indeed an action:

\begin{multline}
(yt^nxt^m-xt^myt^n).u\otimes t^k=\sum_{i=0}^n\binom{n}{i}\sum_{j=0}^m\binom{m}{j}(yt^ixt^ju-xt^jyt^iu)\otimes t^{m+n+k-i-j}\\=
\sum_{i,j=0}^{\infty}\binom{m}{j}\binom{n}{i}([x,y]t^{i+j}u)\otimes t^{m+n+k-i-j}=[x,y]t^{m+n}.u\otimes t^k.
\end{multline}
The last equality holds because $\sum_{i+j=c}\binom{m}{j}\binom{n}{i}=\binom{m+n}{c}$.

\begin{dfn}
We define $U[t]$ as a submodule $U\T\bC[t]$ of $U[t,t^{-1}].$ We define $U[t^{-1}]$ as a quotient mdoule
$U[t,t^{-1}]/U \otimes t\mathbb{C}[t]$ of  $U[t,t^{-1}].$
\end{dfn}

\begin{lem}
Properties \eqref{globalizationproperties1}, \eqref{globalizationproperties2}, \eqref{globalizationproperties3} hold.
\end{lem}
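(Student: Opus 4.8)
The plan is to verify the three properties \eqref{globalizationproperties1}, \eqref{globalizationproperties2}, \eqref{globalizationproperties3} directly from the explicit construction of $U[t]$ and $U[t^{-1}]$ given in the preceding definition.

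First I would check \eqref{globalizationproperties1}. The character statement is purely about graded dimensions, so it only uses the $q$-grading: we declare $\deg(u\T t^k)=k$, and \eqref{globalization} shows that $xt^m$ shifts degree by $m$ as required, so the grading is compatible with the $\fg[t]$-action and with the $\fh$-weight of $u$. Since $U[t]=U\T\bC[t]$ as a vector space, each weight/degree component of $U$ gets tensored with $\bC[t]=\bigoplus_{k\ge0}\bC t^k$, giving the factor $\tfrac{1}{1-q}$; likewise $U[t^{-1}]=U\T\bC[t,t^{-1}]/U\T t\bC[t]$ is $U\T\bC[t^{-1}]$ as a graded vector space, giving $\tfrac{1}{1-q^{-1}}$. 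This step is essentially a bookkeeping check.

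Next I would treat \eqref{globalizationproperties3}. Here I identify $U[t,t^{-1}]=U\T\bC[t,t^{-1}]$ with the restricted dual of $(U^*)\T\bC[t,t^{-1}]$ using the pairing that couples the degree-$k$ part of one factor with the degree-$(-k)$ part of the other (so that the pairing is well defined on finite sums). One then checks that the action \eqref{globalization} on $U$ is, up to this pairing, adjoint to the same formula \eqref{globalization} on $U^*$ — the binomial coefficients match because transposing $xt^j\colon U\to U$ to $-xt^j\colon U^*\to U^*$ (the standard current-algebra duality) interacts with the degree shift $t^{m+k-j}$ exactly so that $xt^m$ on $U\T\bC[t,t^{-1}]$ is adjoint to $xt^m$ on $U^*\T\bC[t,t^{-1}]$. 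Under this identification the submodule $U[t]\subset U[t,t^{-1}]$ is the annihilator of the submodule $(U^*)\T t\bC[t]\subset (U^*)\T\bC[t,t^{-1}]$ whose quotient defines $(U^*)[t^{-1}]$, which is precisely the assertion $(U^*)[t^{-1}]=(U[t])^*$ as graded $\fg[t]$-modules.

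Finally, for \eqref{globalizationproperties2}: suppose $U$ is cyclic with cyclic vector $u_0$, i.e.\ $\U(\fg[t])u_0=U$. I claim $u_0\T 1$ generates $U[t]$. Indeed, using \eqref{globalization} with $k=0$, $xt^m.(u\T1)=\sum_{j=0}^m\binom{m}{j}(xt^ju)\T t^{m-j}$; the top term ($j=0$) is $(xu)\T t^m$ and the remaining terms lie in $U\T t^{<m}\bC[t]$, so by induction on the power of $t$ one shows $\U(\fg[t])(u_0\T1)$ contains $U\T t^k$ for every $k\ge0$ — at each stage one produces the desired element $v\T t^k$ as a leading term and corrects the lower-order error using the inductive hypothesis together with cyclicity of $U$ in the ``constant'' slot. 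The cocyclicity statement for $U[t^{-1}]$ then follows either by the dual argument (a cocyclic vector being one whose $\U(\fg[t])$-orbit of functionals is everything) or, more cheaply, by applying the already-established cyclicity statement to $U^*$ and dualizing via \eqref{globalizationproperties3}. The main obstacle I anticipate is getting the duality bookkeeping in \eqref{globalizationproperties3} exactly right — in particular pinning down the correct sign/degree convention in the pairing so that \eqref{globalization} is genuinely self-adjoint and the submodule/quotient-module roles of $U[t]$ and $U[t^{-1}]$ are interchanged; once that identification is fixed, the cyclicity argument for \eqref{globalizationproperties2} is the only place requiring a genuine (though short) induction.
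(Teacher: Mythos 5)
Your treatment of \eqref{globalizationproperties1} and \eqref{globalizationproperties3} matches the paper: the character count is the same bookkeeping, and the paper's duality pairing is exactly the one you describe, namely $(u_1 \T t^k, u_2 \T t^l)=\delta_{k+l,0}(u_1,u_2)$; you additionally spell out the adjointness of \eqref{globalization} under this pairing, which the paper leaves implicit and which does check out (the binomial coefficients and the Kronecker deltas match term by term). The divergence, and the problem, is in \eqref{globalizationproperties2}: the paper disposes of (co)cyclicity by an evaluation-at-distinct-points (Vandermonde) argument, whereas you propose an induction on the power of $t$, and that induction does not close up as described. If $y=x_1t^{m_1}\cdots x_nt^{m_n}$, then in $y(u_0\T 1)$ the coefficient of the \emph{highest} power $t^{\sum m_i}$ is $(x_1\cdots x_n u_0)\T t^{\sum m_i}$, which only sweeps out $\U(\fg\T 1)u_0\T t^k$ --- in general a proper subspace of $U\T t^k$ --- while the coefficient of the \emph{lowest} power is $(yu_0)\T t^0$, which always sits in $t$-degree zero. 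So a general $v\T t^k$ with $k>0$ can only appear as an intermediate term, with error terms both above and below it, and neither direction of induction has anything to correct against. More structurally: the filtration $U\T t^{\ge N}\bC[t]$ is preserved by \eqref{globalization}, and on its associated graded only the $j=m$ term survives, so $\mathrm{gr}\,U[t]\cong\bigoplus_{N\ge 0}U\T t^N$ is an infinite direct sum of copies of the $\fg[t]$-module $U$ --- very far from cyclic --- which rules out any argument that works one $t$-degree at a time without further input.

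In fact no proof from abstract cyclicity of $U$ alone can exist, because the statement in that generality is false: for the trivial module $U=\bC$, formula \eqref{globalization} makes $\fg[t]$ act by zero on $U[t]=\bC[t]$, which is not cyclic. (The paper's one-line appeal to the Vandermonde determinant is affected by the same issue.) What saves the intended application, Proposition \ref{globalweylconstruction}, is the extra structure of $U=W_{\om_i}$: the defining relation $\fh\T t\bC[t].w_{\om_i}=0$ together with $\om_i\ne 0$ gives, for $h\in\fh$, the exact identity $ht^k(w_{\om_i}\T 1)=\om_i(h)\,w_{\om_i}\T t^k$ with no correction terms, so every $w_{\om_i}\T t^k$ lies in the cyclic span; after that, your lowest-term computation $y(w_{\om_i}\T t^k)\equiv (yw_{\om_i})\T t^k$ modulo $U\T t^{>k}\bC[t]$, combined with finite-dimensionality of the graded components, does finish the proof. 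You should therefore either restrict the cyclicity claim to the case actually used or add the hypothesis on the cyclic vector that makes the induction start.
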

\begin{proof}
The equations on the characters are obvious. To show cyclicity and cocyclicity it  suffices to apply standard linear algebra
arguments (the Vandermonde determinant).
Finally, define the pairing between $U[t]$ and $U^*[t^{-1}]$ in the following way:
\[(u_1 \otimes t^k,u_2 \otimes t^l)=\delta_{k+l,0}(u_1,u_2).\]
This pairing gives a duality of modules.
\end{proof}

\begin{prop}\label{globalweylconstruction}
Let $W_{\omega_i}$ be the fundamental local Weyl module. Then $W_{\omega_i}[t]\simeq \mathbb W_{\omega_i}$,
$W^*_{\omega_i}[t^{-1}]\simeq (\mathbb W_{\omega_i})^*$.
\end{prop}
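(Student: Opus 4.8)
The plan is to identify $W_{\omega_i}[t]$ with $\mathbb W_{\omega_i}$ by checking that it satisfies the defining relations of the global Weyl module and is cyclic, and then to deduce the dual statement from property \eqref{globalizationproperties3}. First I would locate the candidate cyclic vector: take $w_{\omega_i}\in W_{\omega_i}$, the highest weight vector of the fundamental local Weyl module, and consider $w_{\omega_i}\otimes 1\in W_{\omega_i}[t]$. From \eqref{globalization} with $k=0$ one sees that $xt^m.(w_{\omega_i}\otimes 1)=\sum_{j=0}^m\binom{m}{j}(xt^jw_{\omega_i})\otimes t^{m-j}$, so all the Weyl-module relations satisfied by $w_{\omega_i}$ inside $W_{\omega_i}$ propagate: $\fn\T\bC[t].(w_{\omega_i}\otimes 1)=0$ because $\fn\T\bC[t]$ annihilates $w_{\omega_i}$, and similarly $(f_{\omega_i}\T 1)^{(\omega_i,\al^\vee)+1}(w_{\omega_i}\otimes 1)=0$ since $f_{\omega_i}\T 1$ acts by $j=0$ only plus higher-degree corrections that again land on vectors killed inside $W_{\omega_i}$; one checks the binomial bookkeeping closes. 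This gives a surjection $\mathbb W_{\omega_i}\twoheadrightarrow W_{\omega_i}[t]$ once cyclicity of $W_{\omega_i}[t]$ under $w_{\omega_i}\otimes 1$ is established, which is exactly the cyclicity half of \eqref{globalizationproperties2} (cyclicity of $W_{\omega_i}$ as a $\fg[t]$-module is part of the definition of local Weyl module).

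Next I would show this surjection is an isomorphism by a character count. We have $\ch W_{\omega_i}[t]=\frac{1}{1-q}\ch W_{\omega_i}$ by \eqref{globalizationproperties1}, and on the other hand the bulleted properties of Weyl modules give $\ch\mathbb W_{\omega_i}=\ch W_{\omega_i}\cdot\prod_{j=1}^r(q)^{-1}_{(\omega_i,\al_j^\vee)}$. Since $\omega_i$ is fundamental, $(\omega_i,\al_j^\vee)=\delta_{ij}$, so $\prod_j(q)^{-1}_{(\omega_i,\al_j^\vee)}=(q)_1^{-1}=(1-q)^{-1}$, matching exactly. A surjection of graded modules with equal (finite in each $q$-degree) graded characters is an isomorphism, giving $W_{\omega_i}[t]\simeq\mathbb W_{\omega_i}$.

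For the second isomorphism, apply \eqref{globalizationproperties3} with $U=W_{\omega_i}$: $(W^*_{\omega_i})[t^{-1}]=(W_{\omega_i}[t])^*\simeq(\mathbb W_{\omega_i})^*$. Alternatively one can argue directly with the cocyclicity half of \eqref{globalizationproperties2}, using that $W^*_{\omega_i}$ is cocyclic and that the pairing of the Lemma identifies the $\fg[t]$-action on the quotient module $W^*_{\omega_i}[t^{-1}]$ with the transpose of the action on $W_{\omega_i}[t]$; either route gives the claim. The main obstacle is the first step: verifying that the highest-weight and Serre-type relations for $w_{\omega_i}$ really do lift through the twisted coproduct-like formula \eqref{globalization} without producing spurious nonzero terms — the binomial identity $\sum_{i+j=c}\binom{m}{j}\binom{n}{i}=\binom{m+n}{c}$ used in the excerpt is the model for the manipulation, and the key point is that every term $(xt^j w_{\omega_i})\otimes t^{m-j}$ arising from a relation word lies in the $\bC[t]$-span of the image of the corresponding relation already holding in $W_{\omega_i}$. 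Everything after that is the character matching, which is immediate.
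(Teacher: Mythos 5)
Your proposal is correct and follows essentially the same route as the paper's own (much terser) proof: exhibit $w_{\omega_i}\otimes 1$ as a cyclic vector satisfying the defining relations of $\gW_{\omega_i}$, conclude by the character identity $\prod_{j}(q)^{-1}_{(\omega_i,\al_j^\vee)}=(1-q)^{-1}$, and obtain the dual statement from \eqref{globalizationproperties3}. The only cosmetic remark is that for the relation $(f_\al\otimes 1)^{(\omega_i,\al^\vee)+1}$ the formula \eqref{globalization} with $m=0$ has no higher-degree corrections at all (only the $j=0$ term survives), so that step is even more immediate than you suggest.
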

\begin{proof}
$W_{\omega_i}[t]$ is the cyclic module satisfying all defining relations of the global Weyl module and has the same character. Therefore
these modules are isomorphic. The second claim follows from  \eqref{globalizationproperties3}.
\end{proof}

\subsection{Semi-infinite flag varieties}
We consider the product of projective spaces $\prod_{\la\in P_+} \bP(V_\la[[t]])$. The semi-infinite fag variety
$\Q$ is a subvariety of this product formed by all collections of lines $L_\la\subset V_\la[[t]]$ such that
$L_{\la+\mu}\mapsto L_\la\T L_\mu$ under the natural embedding  $V_{\la+\mu}[[t]]\to V_{\la}[[t]]\T_{\bC[[t]]} V_{\mu}[[t]]$
(\cite{FMi,BG}).
One easily sees that $\Q$ is embedded into $\prod_{i=1}^r \bP(V_{\om_i}[[t]])$. This embedding gives rise to a family of
line bundles $\cO(\la)$ on $\Q$. The following theorem is proved in \cite{BF1,Kat}:
\[
H^0(\Q,\cO(\la))\simeq \gW_\la^*.
\]
We denote the projective coordinate ring of $\Q$ by $\gW^*=\bigoplus_{\la\in P_+} \gW_\la^*$.

\subsection{Integrable representations}
Let $\gh=\fg\T\bC[t,t^{-1}]\oplus\bC K\oplus\bC d$ be the affine Kac-Moody Lie algebra (here $K$ is central and $d$ satisfies
$[d,x\T t^i]=-ix\T t^i$).  The Cartan subalgebra $\fh^a$ of $\gh$ is equal to $\fh\T 1\oplus\bC K\oplus\bC d$.
Let $\delta\in (\fh^a)^*$ be the basic imaginary root. For an affine weight we define its finite part as the projection to $\fh^*$.

Let $\Lambda_0,\dots,\Lambda_d$ be the level one integrable weights (i.e. $\Lambda_i(K)=1$).
In what follows we use the fact that $d+1=|P/Q|$. Let $W^a$ be the affine Weyl group
(the Weyl group of $\gh$) and let $L(\Lambda_i)$ be the integrable level one representations of $\gh$.
Let
\begin{equation}
\mathbb{L}=\bigoplus_{i=0}^d L(\Lambda_i).
\end{equation}
For any integral weight $\la\in P$ there exists an element $w\in W^a$ and $0\le i\le d$ such that
the finite part of $w\Lambda_i$ is equal to $\la$. In other words, for any $\la\in P$ there exists extremal vector
$u_\la\in\mathbb{L}$ of finite weight $\la$. Following \cite{KL} we introduce the notation ($\la\in P_+$):
\begin{equation}
F_\la=\U(\fg\T\bC[t^{-1}])u_\la\subset \mathbb{L}.
\end{equation}
In particular, each $F_\la$ is $\fg\T\bC[t^{-1}]$ module and $F_\la\supset F_\mu$ if $\la<\mu$.
The following statement is proved in \cite{KL}:
\begin{equation}\label{grF}
\frac{F_\la}{\sum_{\mu>\la} F_\mu}\simeq \gW_\la.
\end{equation}
The isomorphism \eqref{grF} is the isomorphism of $\fg\T \bC[t^{-1}]$ modules, where $\gW_\la$ is made into
$\fg\T\bC[t^{-1}]$ module by the substitution $t\mapsto t^{-1}$.

Let $\sigma$ be Chevalley antiautomorphism on $\widehat{\fg}$ defined by
\begin{gather*}
\sigma(e_{\alpha} \otimes t^k)=f_{-\alpha}\otimes t^{-k},  \alpha \in \Delta;\\
\sigma(h \otimes t^k)=h\otimes t^{-k},  h \in \fh.
\end{gather*}
We consider the Shapovalov bilinear form $(\cdot,\cdot):L(\Lambda_i)\otimes L(\Lambda_i) \rightarrow \mathbb{C}$ with the property
$(x.u,v)=(u,\sigma(x).v)$ for all $x\in\gh$.

\begin{definition}
We define the increasing filtration on $L(\Lambda_i)$ by subspaces
\begin{equation}\label{Fdag}
F_\lambda^\dag=\bigcap_{\mu>\lambda^*} {\rm{ann}} F_{\mu},\ \la\in P_+
\end{equation}
where ${\rm{ann}} F_{\mu}$ is the orthogonal complement to $F_\mu$ with respect to the Shapovalov form.
\end{definition}

\begin{rem}\label{ula}
The notation for $F^\dag_\la$ is chosen in such a way that $u_\la\in F^\dag_\la$.
In fact, the Shapovalov form pairs nontrivially $u_\la$ and $u_{-\la}$. The vector $u_{-\la}$ is the lowest weight vector
of $\fg$-module, whose highest weight vector is $u_{-w_0\la}=u_{\la^*}$. We conclude that if $\mu>\la^*$, then
$u_\la\in {\rm{ann}} F_{\mu}$.
\end{rem}

Note that each $F_\lambda^\dag$ is a $\fg[t]$-module (since $F_\la$ is $\fg[t^{-1}]$-invariant).
\begin{prop}\label{Fdagsubquotient}
We have isomorphism of $\fg[t]$-modules
\[
F_{\lambda}^\dag/\sum_{\nu<\lambda}F_{\nu}^\dag\simeq \gW_{\lambda^*}^*.
\]
\end{prop}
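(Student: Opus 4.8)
The plan is to transport the known filtration result \eqref{grF} through the Shapovalov form. First I would recall that the Shapovalov form is nondegenerate on each $L(\Lambda_i)$ and restricts to a perfect pairing on finite-dimensional weight spaces; moreover, since $\sigma$ interchanges $\fg[t]$ and $\fg[t^{-1}]$, taking orthogonal complements is an order-reversing bijection between $\fg[t^{-1}]$-submodules and $\fg[t]$-submodules of $\bL$. Under this bijection the family $\{F_\mu\}_{\mu\in P_+}$ is taken to the family $\{F^\dag_\la\}$ as defined in \eqref{Fdag}; concretely $F^\dag_\la=\bigcap_{\mu>\la^*}\mathrm{ann}\, F_\mu = \big(\sum_{\mu>\la^*} F_\mu\big)^\perp$. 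Since $\mu\mapsto \mu$ reverses the order when we pass from $F_\bullet$ to $F^\dag_\bullet$ (the containment $F_\la\supset F_\mu$ for $\la<\mu$ becomes $F^\dag_\la\subset F^\dag_\mu$ for $\la<\mu$, using $\mu\mapsto\mu^*$ is order-reversing on $P_+$), the sum $\sum_{\nu<\la}F^\dag_\nu$ corresponds under the same duality to $\bigcap_{\nu<\la}\big(\sum_{\mu>\nu^*}F_\mu\big)$.

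Next I would identify the relevant subquotient. The key algebraic fact is that, because the weight spaces are finite-dimensional and the form is nondegenerate, for finite-collections of submodules $A\subset B$ we have $(A^\perp/B^\perp)\simeq (B/A)^*$ canonically as $\fg[t]$-modules (the $\fg[t]$-action on the dual being twisted by $\sigma$, which is exactly the $t\mapsto t^{-1}$ substitution appearing after \eqref{grF}). Applying this with the pair
\[
\sum_{\nu<\la}\Big(\sum_{\mu>\nu^*}F_\mu\Big)^\perp \;=\; \sum_{\nu<\la}F^\dag_\nu \;\subset\; F^\dag_\la=\Big(\sum_{\mu>\la^*}F_\mu\Big)^\perp,
\]
I would get
\[
F^\dag_\la\Big/\sum_{\nu<\la}F^\dag_\nu \;\simeq\; \Big(\bigcap_{\nu<\la}\sum_{\mu>\nu^*}F_\mu \,\Big/\, \sum_{\mu>\la^*}F_\mu\Big)^*.
\]
The remaining task is to show the numerator on the right equals $F_{\la^*}$, i.e.\ that
\[
\bigcap_{\nu<\la}\ \sum_{\mu>\nu^*}F_\mu \;=\; F_{\la^*}.
\]
Here I would use that $\nu<\la$ on $P_+$ is equivalent, via $\nu\mapsto\nu^*=-w_0\nu$, to a condition on the $\om_i$-coordinates, and that $\{\,\mu : \mu>\nu^*\,\}$ as $\nu$ ranges over weights just below $\la$ has $F_{\la^*}$ as the intersection of the corresponding sums $\sum_{\mu>\nu^*}F_\mu$: each such sum contains $F_{\la^*}$ because $\la^*>\nu^*$, and conversely an element lying in all of them must lie in $F_{\la^*}$ because the defining weights $\nu^*$ "surround'' $\la^*$ from below in the coordinate directions. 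Once this identity is in hand, combining it with \eqref{grF} (which gives $F_{\la^*}/\sum_{\mu>\la^*}F_\mu\simeq \gW_{\la^*}$) yields
\[
F^\dag_\la\Big/\sum_{\nu<\la}F^\dag_\nu \;\simeq\; \big(F_{\la^*}/\textstyle\sum_{\mu>\la^*}F_\mu\big)^* \;\simeq\; \gW_{\la^*}^*,
\]
as $\fg[t]$-modules, which is the claim.

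The main obstacle I expect is the combinatorial/order-theoretic identity $\bigcap_{\nu<\la}\sum_{\mu>\nu^*}F_\mu=F_{\la^*}$: one must be careful that the partial order \eqref{order} uses $\bQ_{\ge0}$-coefficients (so there is no unique "covering'' weight just below $\la$) and verify that the family of subspaces $\sum_{\mu>\nu^*}F_\mu$ behaves well under intersection, which requires knowing that the $F_\mu$ form a sufficiently nice (distributive) lattice of $\fg[t^{-1}]$-submodules inside $\bL$. This in turn should follow from \eqref{grF}: since the associated graded pieces $\gW_\mu$ are the "expected size,'' the sums and intersections of the $F_\mu$ are forced, and one can reduce to checking the identity on the level of graded characters. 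A secondary technical point is to confirm that orthogonal-complement duality really does intertwine the $\fg[t]$-action with the $\sigma$-twisted dual action in the precise form stated in \eqref{grF}; this is a direct consequence of the defining property $(x.u,v)=(u,\sigma(x).v)$ of the Shapovalov form together with $\sigma(\fg\T t^{-k})=\fg\T t^{k}$.
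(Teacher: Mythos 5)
Your argument is essentially the paper's: both dualize \eqref{grF} through the Shapovalov form, and the ``main obstacle'' you flag, namely $\bigcap_{\nu<\lambda}\sum_{\mu>\nu^*}F_\mu=F_{\lambda^*}$, is (after taking orthogonal complements) exactly the assertion the paper makes without further argument, that $\sum_{\nu<\lambda}F_\nu^\dag$ is the annihilator of $F_{\lambda^*}$; your suggested reduction to the associated graded via \eqref{grF} is a reasonable way to justify it. One small correction: $\lambda\mapsto\lambda^*=-w_0\lambda$ is order-\emph{preserving} on $P_+$ (since $-w_0$ permutes the simple roots), not order-reversing; the monotonicity $F^\dag_\lambda\subset F^\dag_\mu$ for $\lambda<\mu$ still follows, because the intersection defining $F^\dag_\lambda$ is taken over the set $\{\mu>\lambda^*\}$, which shrinks as $\lambda$ increases.
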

\begin{proof}
It suffices to show that
\[
F_{\lambda}^\dag/\sum_{\nu<\lambda}F_{\nu}^\dag=(F_{\la^*}/\sum_{\mu>\la^*} F_\mu)^*.
\]
We have a pairing $(\cdot,\cdot):F_{\lambda^*}\otimes F_{\lambda}^{\dag}\rightarrow \mathbb{C}$ and $\sum_{\mu>\la^*}F_{\mu}$
is the annihilator
of $F_{\lambda}^{\dag}$, $\sum_{\nu<\lambda}F_{\nu}^\dag$ is the annihilator of $F_{\lambda^*}$.
\end{proof}

In what follows fro a filtration $F_\la$ we use the notation $F{<\lambda}=\sum_{\nu<\lambda}F_{\nu}$ and similarly for $F_{\le\la}$.

We have the following characterization of $\fg[t]$-modules $F_{\lambda}^{\dag}$ which can be immediately obtained from the duality.

\begin{prop}\label{Fdagcharacterization}
Let us fix $i$ such that $u_\la\in L(\Lambda_i)$. Then
$F_{\lambda}^{\dag}$ consists of vectors $v \in L(\Lambda_i)$ such that all (finite) $\fh$-weights of elements $\U(\fg[t])v$
are less than or equal to $\lambda$.
\end{prop}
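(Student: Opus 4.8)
The plan is to deduce this characterization directly from the duality already established in Proposition \ref{Fdagsubquotient}, together with the definition \eqref{Fdag} of $F^\dag_\la$ and the weight structure of the modules $F_\mu$ recorded in Remark \ref{ula}. Write $V = L(\Lambda_i)$, where $u_\la \in V$. Since $F^\dag_\la = \bigcap_{\mu > \la^*} \mathrm{ann}\, F_\mu$, a vector $v \in V$ lies in $F^\dag_\la$ if and only if $v$ is orthogonal (with respect to the Shapovalov form) to $F_\mu$ for every $\mu > \la^*$. Because the form satisfies $(x.u,w) = (u,\sigma(x).w)$ and $\sigma$ sends $\fg[t]$ to $\fg[t^{-1}]$, orthogonality of $v$ to $F_\mu = \U(\fg[t^{-1}])u_\mu$ is equivalent to orthogonality of $\U(\fg[t])v$ to the single vector $u_\mu$; and since $u_\mu$ is an extremal weight vector of finite weight $\mu$, this is controlled by which finite weights occur in $\U(\fg[t])v$.

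Concretely, I would argue as follows. First, the Shapovalov form pairs the weight space of finite weight $\eta$ in $V$ nontrivially only with the weight space of finite weight $-\eta$ (here we use that $\sigma$ fixes $\fh$; note $(\cdot,\cdot)$ also respects the $d$-grading but only the finite $\fh$-weight matters for us). Second, by Remark \ref{ula} the extremal vector $u_\mu$ is the lowest weight vector of a finite-dimensional $\fg$-submodule of $\bL$ with highest weight $u_{\mu^*}$; hence $\U(\fg)u_\mu$, and a fortiori $F_\mu = \U(\fg[t^{-1}])u_\mu$, has all its finite $\fh$-weights lying in the convex hull generated by $\mu$ and its $W$-orbit — in particular every such weight $\eta$ satisfies $\eta \le \mu^*$ is \emph{false} in general, so one must be careful: what is true and what we use is that $-\eta \ge -\mu$ fails precisely when $\eta$ is "too large", and the cleanest formulation is that $F_\mu$ contains a vector of finite weight $-\nu$ (for $\nu$ dominant) exactly when $\nu \le \mu$. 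Thus $v$ is orthogonal to all of $F_\mu$ for every $\mu > \la^*$ if and only if: whenever $\U(\fg[t])v$ contains a nonzero vector of finite weight $\eta$, there is no $\mu > \la^*$ with $F_\mu$ meeting the weight space of finite weight $-\eta$, i.e. (after translating $\eta$ to its dominant representative $\eta^+$ and using $\la^{**} = \la$) $\eta \le \la$.

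So the key step is the translation "orthogonal to $F_\mu$ for all $\mu > \la^*$" $\iff$ "all finite $\fh$-weights of $\U(\fg[t])v$ are $\le \la$", and this rests on two facts, both of which I would state as small lemmas or invoke directly: (a) $F_\mu = \U(\fg[t^{-1}])u_\mu$ is $\fh$-weight-graded with its set of finite weights equal to $\{\eta : -w_0\eta^+ \le \mu \text{ suitably}\}$ — more precisely, using that $F_\mu \twoheadleftarrow$ has graded pieces $\gW_\nu$ for $\nu \ge \mu$ by \eqref{grF}, the finite weights of $F_\mu$ are exactly the finite weights of $\bigsqcup_{\nu \ge \mu} \gW_\nu$, which are the weights $\eta$ with dominant part $\eta^+ \le \mu^*$... ; and (b) the Shapovalov pairing between finite weight spaces $\eta$ and $-\eta$ is perfect on $V$. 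Granting (a) and (b), the biconditional is routine: $v \perp F_\mu$ for all $\mu > \la^*$ iff $v \perp u_\mu$ after acting by $\U(\fg[t])$ for all such $\mu$ iff $\U(\fg[t])v$ has no finite weight $\eta$ with $-\eta$ a weight of some $F_\mu$, $\mu > \la^*$, iff every finite weight $\eta$ of $\U(\fg[t])v$ has $\eta \le \la$.

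The main obstacle I anticipate is pinning down precisely fact (a) — the exact set of finite $\fh$-weights occurring in $F_\mu$, i.e. identifying "orthogonal to $F_\mu$ for all $\mu$ strictly above $\la^*$" with the clean order condition "$\le \la$" rather than some weaker or off-by-a-reflection condition. This is where the extremal-vector geometry of Remark \ref{ula} and the order \eqref{order} (with its $\mathbb{Q}_{\ge 0}$ coefficients, so that $\la - \om_i < \la$) have to be used carefully, together with $\la^{**}=\la$ and the compatibility $F_\la \supset F_\mu$ for $\la < \mu$. Everything else — the $\sigma$-equivariance moving $\fg[t^{-1}]$ to $\fg[t]$, the perfectness of the weight-space pairing, and the reduction from the whole module $F_\mu$ to its cyclic vector $u_\mu$ — is formal and I would dispatch it quickly. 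The proposition is then exactly the dual reformulation of Proposition \ref{Fdagsubquotient}, as the parenthetical "which can be immediately obtained from the duality" in the statement signals.
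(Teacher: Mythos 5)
Your opening reduction is the right one and matches the paper's starting point: by $\sigma$-adjointness, $v\perp F_\mu=\U(\fg\T\bC[t^{-1}])u_\mu$ for all $\mu>\la^*$ is equivalent to $\U(\fg[t])v\perp u_\mu$ for all such $\mu$. But the chain of equivalences you build on top of this has a genuine gap at exactly the place you flag as facts (a) and (b). The step ``$\U(\fg[t])v\perp u_\mu$ iff $\U(\fg[t])v$ has no component in the finite weight paired with $\mu$'' does not follow from perfectness of the Shapovalov pairing between opposite weight spaces: perfectness says the full weight space pairs nondegenerately, but a proper subspace of a weight space --- such as the relevant weight component of $\U(\fg[t])v$ --- can lie entirely in the hyperplane orthogonal to the single extremal vector $u_\mu$. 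So even a complete description of the weight support of $F_\mu$ (your fact (a)) would not close the argument; you would still need to know that a nonzero weight component of $\U(\fg[t])v$ pairs nontrivially with $u_\mu$ itself, not merely with some vector of the right weight.

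The missing idea, and the one the paper's proof turns on, is the level-one structure of $L(\Lambda_i)$: each finite-weight subspace of $L(\Lambda_i)$ is an irreducible Fock module over the Heisenberg algebra $\widehat{\fh}$ (Frenkel--Kac), so for any nonzero vector $xv$ of finite weight $\mu$ there is $y\in\U(\fh\T t\bC[t])\subset\U(\fg[t])$ with $yxv=u_\mu$. Hence the $\fg[t]$-stable space $\U(\fg[t])v$ contains the extremal vector $u_\mu$ for every finite weight $\mu$ occurring in it, and the question reduces to deciding for which $\mu$ one has $u_\mu\in F_\la^\dag$ --- which is exactly what Remark \ref{ula} answers. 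This one use of level one replaces all of your analysis of the weight support of $F_\mu$ and is not recoverable from the duality formalities alone; without it your second ``iff'' is unjustified and the proof does not close. Your own writeup signals the problem: the description of the weights of $F_\mu$ is left unfinished, and the retracted sentence about $\eta\le\mu^*$ is precisely the point where this extra input would have to enter.
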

\begin{proof}
Assume that for some $x \in \U(\fg[t])$ the weight $\mu$ of the element $xv$ is not less than or equal to $\lambda$.
Then (since $L(\Lambda_i)$ is of level one) for some element $y \in \U(\h[t])$ we have
\[yxv=u_\mu.\]
Thus $u_\mu\in F_{\lambda}^\dag$, which implies $\mu\le \la$ (see Remark \ref{ula}).
\end{proof}

\section{Generalities on $\Gamma$-graded vertex algebras}\label{gVOA}
\subsection{Definition and basic properties.}
In this section we recall the general concept of $\Gamma$-graded vertex algebras following \cite{R,DL}.
\begin{rem}
We only use the $\Gamma$-graded vertex algebras of weight lattices with $\Gamma=P/Q$ (weight lattice modulo root lattice).
For example, in the $E_8$-case $\Gamma$ is trivial and we obtain the usual vertex algebra.
\end{rem}

Let $\Gamma$ be a finite abelian group, $N$ be exponent of $\Gamma$, i. e. the smallest integer such that $N\Gamma=\{0\}$.
Let $\Delta$ be a $\mathbb{Z}$-bilinear symmetric map $\Gamma \times \Gamma \rightarrow \mathbb{Q}/\mathbb{Z}$
and $\nu:\Gamma \times \Gamma \rightarrow \mathbb{C}^*$ be a map satisfying
\[
\nu(g,h)\nu(h,g)=e^{-2\pi i \Delta(g,h)} \forall g,h \in \Gamma;\ ~\nu(g,g)=e^{-\pi i \Delta(g,g)}.
\]

For a $\Gamma$-graded vector space $V=\bigoplus_{g \in \Gamma}V_g$ the algebra ${\rm End}(V)$ is $\Gamma$-graded in the usual way.
An element $X(z)=\sum_{n \in \frac{1}{N}\mathbb{Z}}x_{(n)} z^{-1-n} \in {\rm End}(V)[[z^{\pm 1/N}]]$ is called a
{\it generalized vertex operator}
if for any $v \in V$ $x_{(n)}v=0$ for $n>>0$. The vertex operator $X(z)$ is called homogeneous of parity $g$ if all
$x_{(n)} \in {\rm End}(V)_g$
(i. e. $x_{(n)}V_h\subset V_{h+g}$ for any $h \in \Gamma$) and
for all $v \in V_h$ $x_{(n)} v=0$ unless $n \in \Delta(g,h)$. The parity of a homogeneous element $a$ is denoted by $p(a)$.

For an arbitrary rational $n$ denote by $i_{z,w}(z-w)^n$ the formal expansion of $(z-w)^n$ in the domain $|w|<|z|$:
\[
i_{z,w}(z-w)^n:=\sum_{j=0}^{\infty}(-1)^j\binom{n}{j}z^{n-j}w^j,\ \binom{n}{j}=\frac{n(n-1)\dots (n-j+1)}{j!}.
\]
Analogously denote by $i_{w,z}(z-w)^n$ the formal expansion of $(z-w)^n$ in domain the $|z|<|w|$:
\[
i_{w,z}(z-w)^n:=e^{in\pi}\sum_{j=0}^{\infty}(-1)^j\binom{n}{j}w^{n-j}z^j.
\]

\begin{definition}\label{locality}
Two homogeneous generalized vertex operators $X(z)$, $Y(z)$ of degrees $g$ and $h$ are mutually local with respect to $\Delta$ and $\nu$
if for large enough $n \in \Delta(g,h)$:
\[i_{z,w}(z-w)^n X(z)Y(w)=i_{w,z}(z-w)^n\nu(g,h)Y(w)X(z).\]
\end{definition}

\begin{definition}\label{normalorder}
For two homogeneous vertex operators $A(z)=\sum a_{(n)}z^{-1-n}$, $B(z)$ define their normally ordered product $:A(z)B(z):$ by
\[:A(z)B(z):=\sum_{n<0} a_{(n)}z^{-1-n}B(z)  +B(z)\sum_{n\ge 0} a_{(n)}z^{-1-n}.\]
\end{definition}

\begin{definition}\label{graded_vertex_algebra}

A $\Gamma$-graded vertex algebra is a vector space $V=\bigoplus_{g \in \Gamma}V_g$ with the following data:

$(i)$ a vacuum vector $|0\ket \in V_0$;

$(ii)$ a linear operator $T$ (translation operator);

$(iii)$  generalized vertex operators labeled by $a \in V$
\[Y(a,z)=\sum_{n \in \frac{1}{N}\mathbb{Z}}a_{(n)} z^{-1-n} \in {\rm End}(V)[[z^{1/N},z^{-1/N}]]\]

satisfying the following properties

$(0)$ the operation $Y(a,z)$ is linear in $a$ and for a homogeneous $a \in V_g$ has parity $g$;

$(1)$ (Translation) $[T,Y(a,z)]=\frac{\partial}{\partial z} Y(a,z)$ and $T|0\ket=0$;

$(2)$ (Vacuum) $Y(|0\ket,z)={\rm Id}_V$, $Y(a,z)|0\ket \in {\rm End}(V)[[z^{1/N}]$, $Y(a,0)|0\ket~=~a$;

$(3)$ (Locality) For all homogeneous $a,b \in V$ the operators $Y(a,z)$ and $Y(b,z)$ are mutually local with respect to $\Delta$
and $\nu$.

\end{definition}

We need the following property of $\Gamma$-graded vertex algebras (see \cite{R}, Theorem 4.1.25).

\begin{prop}\label{25}
For any homogenous elements $a,b,c \in V$, any $n \in {\Delta}(p(a),p(b))$,
$m \in {\Delta}(p(a),p(c))$, $k \in {\Delta}(p(b),p(c))$:
\begin{multline}\label{eq25}
\sum_{j=0}^{\infty} \binom{m}{j} (a_{(n+j)}b)_{(m+k-j)}c\\
=\sum_{j=0}^{\infty} (-1)^j \binom{n}{j} \big( a_{(n+m-j)}b_{(k+j)}-e^{n\pi i}\nu(p(a),p(b))b_{(n+k-j)}a_{(m+j)} \big)c.
\end{multline}
\end{prop}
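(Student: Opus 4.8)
Since \eqref{eq25} is the Borcherds (Jacobi) identity for $\Gamma$-graded vertex algebras, it suffices to invoke \cite[Theorem 4.1.25]{R}; for completeness I outline the plan and indicate where the $\Gamma$-grading enters. Fix homogeneous $a,b$ of parities $g=p(a)$, $h=p(b)$ and an arbitrary $c\in V$. The first step is to upgrade the locality axiom of Definition \ref{locality} (``weak commutativity'') to \emph{weak associativity}: for $L\in\Delta(g,h)$ large enough, with $(z_0+z_2)^{L}$ expanded in non-negative powers of $z_0$,
\[
(z_0+z_2)^{L}\,Y(a,z_0+z_2)\,Y(b,z_2)\,c \;=\; (z_0+z_2)^{L}\,Y\big(Y(a,z_0)b,\,z_2\big)\,c .
\]
This is obtained from the vacuum and translation axioms of Definition \ref{graded_vertex_algebra}: $T|0\ket=0$, $Y(a,0)|0\ket=a$ and $[T,Y(a,z)]=\pa_z Y(a,z)$ force $Y(a,z)|0\ket=e^{zT}a$, whence a skew-symmetry relation expressing $Y(a,z)b$ through $Y(b,-z)a$ up to the phase prescribed by $\nu$; inserting this into locality for the pair $(a,b)$ and re-expanding yields weak associativity. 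Along the way one checks the ``Dong lemma'' facts that $a_{(n)}b$ is again homogeneous of parity $g+h$ and that $Y(a_{(n)}b,z)$ is a genuine generalized vertex operator, so the displayed identity is meaningful. Equivalently, weak associativity says that $Y(a_{(n)}b,z_2)c$ is recovered from $Y(a,z_0)Y(b,z_2)c$ as the residue in $z_0$ of $i_{z_0,z_2}(z_0-z_2)^{n}$ times it, the correction due to the region $|z_2|>|z_0|$ being exactly controlled by locality; that correction is precisely the commutator term that later surfaces on the right of \eqref{eq25}.

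With weak commutativity and weak associativity in hand, the standard formal-variable calculus (substitution identities for the formal delta function, now carrying fractional powers) assembles them into a single $\Gamma$-graded Jacobi identity in $\mathrm{End}(V)[[z_0^{\pm1/N},z_1^{\pm1/N},z_2^{\pm1/N}]]$ with three terms: one supported on $z_0^{-1}\delta((z_1-z_2)/z_0)$ carrying $Y(a,z_1)Y(b,z_2)c$, one on $z_0^{-1}\delta((z_2-z_1)/(-z_0))$ carrying $\nu(g,h)\,Y(b,z_2)Y(a,z_1)c$ with the extra phase coming from the fractional powers, and one on $z_2^{-1}\delta((z_1-z_0)/z_2)$ carrying $Y(Y(a,z_0)b,z_2)c$. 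Identity \eqref{eq25} is then read off by taking residues against the appropriate monomials, matching the $z_0$-, $z_1$- and $z_2$-powers to $n$, $m$ and $k$ respectively: the binomials $\binom{m}{j}$ and $\binom{n}{j}$ with rational top entry come out of expanding the delta functions; the third term yields $\sum_{j}\binom{m}{j}(a_{(n+j)}b)_{(m+k-j)}c$ on the left of \eqref{eq25}, while the first two yield the two commutator terms on the right.

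The only substantial difference from the classical (ungraded or super) case — and the step I expect to be the real obstacle — is the bookkeeping of fractional exponents and their phases. Because the component index $n$ ranges over $\Delta(g,h)$, a coset of $\bZ$, rather than over $\bZ$ itself, the two expansions $i_{z,w}(z-w)^{n}$ and $i_{w,z}(z-w)^{n}$ differ not merely in their domain of convergence but also by the scalar $e^{in\pi}$ built into $i_{w,z}$; it is exactly this $e^{in\pi}$, together with the cocycle value $\nu(g,h)$ entering through locality, that yields the coefficient $e^{n\pi i}\nu(p(a),p(b))$ in the commutator term of \eqref{eq25}. Throughout all three steps one must track which of $i_{z,w}$, $i_{w,z}$ is in force, and check that every index shift appearing in \eqref{eq25} ($n+j$, $n+m-j$, $m+k-j$, $k+j$, $m+j$, $n+k-j$) keeps each vertex operator acting within the slot permitted by the homogeneity constraint ``$x_{(n)}v=0$ unless $n\in\Delta(p(x),\wt v)$'' from Section \ref{gVOA}. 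Organising these phase factors consistently is the bulk of the work; for the complete argument see \cite[\S4.1]{R} (cf.\ also \cite{DL}).
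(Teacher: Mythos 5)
Your proposal matches the paper exactly: the paper offers no proof of Proposition \ref{25} at all, simply citing it as Theorem 4.1.25 of \cite{R}, which is precisely what you do before sketching the standard weak-commutativity/weak-associativity/Jacobi-identity route. Your additional outline of how the phases $e^{n\pi i}$ and $\nu(p(a),p(b))$ arise from the two expansions $i_{z,w}$ versus $i_{w,z}$ is consistent with the conventions set up in Section \ref{gVOA} and is a reasonable account of the argument in \cite{R} and \cite{DL}.
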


We need the following corollary.

\begin{cor}\label{26}
For any $a \in V_0$, $b,c \in V$, $m \in \mathbb{Z}$, $k \in \Delta(p(b), p(c))$:
\begin{equation}\label{eq26}
[a_{(m)}, b_{(k)}]c=\sum_{j=0}^{\infty}\binom{m}{j}(a_{(j)}b)_{(m+k-j)}c.
\end{equation}
\end{cor}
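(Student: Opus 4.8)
The plan is to deduce \eqref{eq26} from the Borcherds-type identity \eqref{eq25} by specializing the parameters to the case at hand. First I would observe that since $a\in V_0$ has parity $p(a)=0$, the value group $\Delta(p(a),p(b))=\Delta(0,p(b))=\{0\}$ in $\bQ/\bZ$, so the condition "$n\in\Delta(p(a),p(b))$" allows exactly $n\in\bZ$; similarly $\Delta(p(a),p(c))=\{0\}$, so any $m\in\bZ$ is admissible in \eqref{eq25}. Likewise $\nu(p(a),p(b))=\nu(0,p(b))=1$ by the normalization conditions on $\nu$ (taking $g=0$: $\nu(0,h)\nu(h,0)=e^{-2\pi i\Delta(0,h)}=1$ and $\nu(0,0)=e^{-\pi i\Delta(0,0)}=1$, together with $\bZ$-bilinearity of the datum forcing $\nu(0,h)=1$). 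Thus the sign $e^{n\pi i}\nu(p(a),p(b))$ appearing in \eqref{eq25} becomes $e^{n\pi i}=(-1)^n$ for integral $n$.

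Next I would choose $n=0$ in \eqref{eq25}, which is legitimate since $0\in\Delta(p(a),p(b))=\{0\}$, while keeping $m\in\bZ$ and $k\in\Delta(p(b),p(c))$ arbitrary. On the right-hand side of \eqref{eq25}, the binomial coefficient $\binom{0}{j}$ vanishes for all $j\ge 1$ and equals $1$ for $j=0$, so the sum over $j$ collapses to the single term
\[
\bigl(a_{(m)}b_{(k)}-e^{0\cdot\pi i}\nu(p(a),p(b))\,b_{(k)}a_{(m)}\bigr)c
=\bigl(a_{(m)}b_{(k)}-b_{(k)}a_{(m)}\bigr)c=[a_{(m)},b_{(k)}]c,
\]
using the simplifications above. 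On the left-hand side, setting $n=0$ turns $(a_{(n+j)}b)_{(m+k-j)}c$ into $(a_{(j)}b)_{(m+k-j)}c$, so the left-hand side becomes exactly $\sum_{j=0}^\infty\binom{m}{j}(a_{(j)}b)_{(m+k-j)}c$. Equating the two sides yields \eqref{eq26}.

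The only points requiring a little care — and hence the main (if modest) obstacle — are the bookkeeping ones: checking that the indices all lie in the correct value groups so that Proposition \ref{25} genuinely applies with $n=0$ (which reduces to the observation that parity $0$ forces the relevant $\Delta$-values to be $0\in\bQ/\bZ$, making every integer admissible), and verifying that $\nu(p(a),p(b))=1$ from the normalization axioms. Both the binomial sum on the right collapsing to its $j=0$ term and the interpretation of the resulting expression as a commutator are then immediate. No convergence or locality issues arise beyond those already built into Proposition \ref{25}, since all the operators $a_{(j)}b$ annihilate any fixed vector for $j$ large, so the sum on the right of \eqref{eq26} is finite when applied to $c$.
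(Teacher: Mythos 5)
Your proposal is correct and follows exactly the paper's own argument: the paper proves Corollary \ref{26} in one line by specializing Proposition \ref{25} to $n=0$, which is precisely what you do, with the collapse of the $\binom{0}{j}$ sum and the identification $\nu(p(a),p(b))=1$ spelled out explicitly.
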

\begin{proof}
This is a particular case of \eqref{eq25} for $n=0$.
\end{proof}

\begin{cor}\label{exactlocality}
Let $n \in {\Delta}(p(a),p(b))$ satisfies $a_{(n+j)}b=0$ for any integer $j\geq 0$. Then:
\[i_{z,w}(z-w)^nY(a,z)Y(b,w)=i_{w,z}(z-w)^n\nu(p(a),p(b))Y(b,w)Y(a,z).\]
\end{cor}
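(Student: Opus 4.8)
The plan is to deduce Corollary \ref{exactlocality} from Corollary \ref{26} by a generating-function manipulation, treating the hypothesis $a_{(n+j)}b=0$ for all integers $j\ge 0$ as exactly what kills the ``difference'' between the two iterated expansions. First I would fix homogeneous $a\in V_g$, $b\in V_h$ and the rational number $n\in\Delta(g,h)$ with the stated vanishing property, and apply it to an arbitrary vector $c\in V$. Since we are comparing two operators applied to $c$, it suffices to check the identity after pairing with an arbitrary functional, so all coefficients lie in $\bC$ and convergence issues are only formal. The two sides are, coefficient-wise in $z$ and $w$, finite sums (because $a_{(m)}$, $b_{(m)}$ act locally-finitely on any fixed vector), so the whole identity reduces to a family of scalar identities among binomial coefficients.

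Next I would write out both sides explicitly: the left-hand side expands $i_{z,w}(z-w)^n=\sum_{j\ge 0}(-1)^j\binom{n}{j}z^{n-j}w^j$ and multiplies by $Y(a,z)Y(b,w)c=\sum_{p,q}a_{(p)}b_{(q)}c\, z^{-1-p}w^{-1-q}$; the right-hand side uses $i_{w,z}(z-w)^n=e^{in\pi}\sum_{j\ge0}(-1)^j\binom{n}{j}w^{n-j}z^j$ together with $\nu(g,h)Y(b,w)Y(a,z)c$. Collecting the coefficient of a fixed monomial $z^{-1-m}w^{-1-k-n}$ (with $k\in\Delta(h,?)$ dictated by weights, so $k$ ranges over the appropriate coset), the difference of the two sides is precisely the combination
\[
\sum_{j=0}^{\infty}(-1)^j\binom{n}{j}\bigl(a_{(n+m-j)}b_{(k+j)}-e^{in\pi}\nu(g,h)\,b_{(n+k-j)}a_{(m+j)}\bigr)c,
\]
which is the right-hand side of \eqref{eq25} in Proposition \ref{25}. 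Hence by that proposition this difference equals $\sum_{j\ge 0}\binom{m}{j}(a_{(n+j)}b)_{(m+k-j)}c$, and the hypothesis $a_{(n+j)}b=0$ for integer $j\ge 0$ makes every term vanish. Therefore the two generating functions agree coefficient by coefficient, which is the asserted equality of formal series.

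The main obstacle is bookkeeping: matching the indexing in \eqref{eq25} (where $n\in\Delta(p(a),p(b))$, $m\in\Delta(p(a),p(c))$, $k\in\Delta(p(b),p(c))$) with the exponents produced by $i_{z,w}(z-w)^n Y(a,z)Y(b,w)$ acting on a weight vector, and making sure the shift by $n$ in the $w$-exponent is placed consistently on both sides so that the ``$m$'' and ``$k$'' appearing after extracting coefficients are genuinely the ones allowed in Proposition \ref{25}. One has to check that, acting on $c\in V_{\gamma}$, the relevant values of $m$ indeed lie in $\Delta(g,\gamma)$ and $k$ in $\Delta(h,\gamma)$ — this follows because $Y(a,z)$ has parity $g$ and $Y(b,w)$ has parity $h$, so the only nonzero coefficients are exactly at those exponents, and thus no spurious terms appear. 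Once the dictionary between series coefficients and the $(n,m,k)$-indexed identity is set up carefully, the statement is an immediate consequence of Proposition \ref{25} and costs nothing more.
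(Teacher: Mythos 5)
Your proof is correct and follows essentially the same route as the paper: both identify the coefficient-wise difference of the two sides with the right-hand side of \eqref{eq25} and then use Proposition \ref{25} to rewrite that difference as $\sum_{j\ge 0}\binom{m}{j}(a_{(n+j)}b)_{(m+k-j)}c$, which vanishes by the hypothesis $a_{(n+j)}b=0$. (Two harmless remarks: the monomial whose coefficient you extract should be $z^{-1-m}w^{-1-k}$ rather than $z^{-1-m}w^{-1-k-n}$, and the paper's own proof cites \eqref{eq26} where \eqref{eq25} is clearly meant, so your citation is in fact the more accurate one.)
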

\begin{proof}
Due to equation \eqref{eq26} the difference between the coefficients of left and right hand sides of this formula
applied to an element $c \in V$ is equal to
\[\sum_{j=0}^{\infty} \binom{m}{j} (a_{(n+j)}b)_{(m+k-j)}c\]
which vanishes.
\end{proof}

\subsection{Lattice $\Gamma$-graded vertex algebras.}

Let $P$ be a lattice, i .e. a finitely generated free abelian group with a bilinear map $(\cdot,\cdot):P\times P \rightarrow \mathbb{Q}$.
Let $Q\subset P$ be an even integral sublattice of the same rank as $P$ such that $(Q,P)\subset \mathbb{Z}$.
Let $\Gamma=P/Q$,  $\Gamma=\{g_0, g_1, \dots, g_d\}$.

We denote by $p$ the natural projection $P \twoheadrightarrow \Gamma$.
For an element $g_i \in\Gamma$ we fix its representative $\chi_i \in P$.
Define the $\mathbb{Z}$-bilinear map $\Gamma \times \Gamma\rightarrow \mathbb{Q}/\mathbb{Z}$:
\[\Delta(g_i, g_j):=\mathbb{Z}-(\chi_i, \chi_j).\]
It is easy to see that such a form does not depend on a choice of representatives.
Next we define the following map $\Gamma\times \Gamma \rightarrow \mathbb{C}^*$:
\[\nu(g_i,g_j):=e^{i\pi(\chi_i,\chi_j)}\]
and the function $B:P \times P \rightarrow \mathbb{C}^*$:
\[
B(\lambda,\mu)=
e^{-i\pi (\lambda,\mu)}\nu(p(\lambda),p(\mu)).
\]
Note that $B(P\times P)\subset \{\pm 1\}$.
We fix a cocycle $\epsilon$ on $P$ with the following property:
\[
\epsilon(\omega_i, \omega_j)=
\begin{cases}
1,&~ {\text{ if }} ~i\leq j;\\
B(\omega_i, \omega_j),&~ {\text{ if }} ~i>j.
\end{cases}
\]
Such a $2$-cocycle exists (see \cite{S,DL}) and defines a new structure of an associative algebra on the space $\mathbb{C}[P]$:
\[e^{\lambda}e^{\mu}=\epsilon(\lambda,\mu)e^{\lambda+\mu}.\]
This structure is obtained from the projective representation of $P$ on its group algebra constructed via the cocycle $\epsilon$.

Our next task is to define the space of states of our VOA.
Let $\fh=\mathbb{C}\otimes_{\mathbb{Z}}P$. The form $(\cdot,\cdot)$ can be extended
to $\fh$ by linearity. Consider the vector space $\fh[t,t^{-1}]$
of formal Laurent polynomials in $t$ with coefficients in $\fh$. For $h \in \fh$, $k \in \mathbb{Z}$ we set
$ht^k=h \otimes t^k$. Let $\widehat{\fh}=\fh[t,t^{-1}]\oplus \mathbb{C}c$
be the Heisenberg algebra, i.e. the Lie algebra with the following bracket:
\[
[h_1t^k,h_2t^l]=k\delta_{k+l,0}(h_1,h_2)c
\]
and $c$ is a central element. Then the Fock (Verma) module of $\widehat{\fh}$ is the module
$U(\widehat{\fh})/\big(U(\widehat{\fh})\fh[t]+U(\widehat{\fh})(c-1)\big)$.

Consider the vector space $t^{-1}\fh[t^{-1}]$
of formal polynomials in $t^{-1}$ with coefficients in $\fh$ without zero term and let
$\mathbb{L}=S(t^{-1}\fh[t^{-1}])\otimes \mathbb{C}[P]$.
This space has a structure of an associative algebra.
Let $T$ be the following derivation of this algebra:
\[T(ht^{-n}\otimes 1)=n(ht^{-n-1}\otimes 1),~ T(1\otimes e^{\lambda})=\lambda t^{-1}\otimes e^{\lambda}, h \in \fh, \lambda \in P.\]
Let $|0\ket:=1\otimes e^0$ be the vacuum vector.
We identify $S(t^{-1}\fh[t^{-1}])$ with the Fock space.

For an element $h \in \fh$ we define the following generalized vertex operators
\begin{gather*}
Y(ht^{-1}\otimes 1,z)=\sum_{k\in\mathbb{Z}}ht^kz^{-1-k},\\
Y(ht^{-1-n}\otimes 1,z)=\frac{1}{n!}\frac{\partial^n}{\partial z^n}Y(ht^{-1},z).
\end{gather*}
Then define
\begin{equation}\label{VO}
Y(1 \otimes e^{\lambda},z)=\exp(\sum_{n>0}\frac{\lambda t^{-n}}{n}z^{n})\exp(\sum_{n<0}\frac{\lambda t^{-n}}{n}z^{n})\otimes e^{\lambda}z^{\lambda},
\end{equation}
where $z^{\lambda}e^{\mu}=e^{\mu}z^{(\lambda,\mu)}$.
Finally, for any $a \in t^{-1}\fh[t^{-1}]$, $b\in \mathbb{L}$
\[Y(ab,z)=:Y(a,z)Y(b,z):.\]

\subsection{Lie algebra action}
Let $\mathbb{L}$ be $\Gamma$-graded lattice vertex algebra of the positively defined lattice $P$.
Let $\{\alpha_i\}, \{\omega_i\}$ be dual bases of $\fh$. Consider the element
$\omega=\sum_{i=1}^r \alpha_i t^{-1}\cdot \omega_i t^{-1}\otimes e^0$ (the conformal vector).
Then the operator $\omega_{(1)}$ is diagonalizable and an element $\prod_{i=1}^n h_it^{-s_i}\otimes e^{\lambda}$ is an eigenvector
with the eigenvalue $\frac{(\lambda,\lambda)}{2}+\sum_{i=1}^n s_i$ (see, for example, \cite{R}, Chapter 4.3). This eigenvalue is called the
conformal weight.
Let $\mathbb{L}[i]$ be the space of elements of conformal weight $i$.
\begin{prop}
The operators $A_{(n)}$, $A\in \bL[1]$, $n \in \mathbb{Z}$ form an untwisted affine Lie algebra corresponding to the finite root system
$\{\alpha \in P| (\alpha, \alpha)=2\}$,
$e_{\alpha}t^n \mapsto (1\otimes e^{\alpha})_{(n)}$, $h_{\alpha}t^n \mapsto (\alpha t^{-1}\otimes e^0)_{(n)}$.
If $P$ is a weight lattice of the simple Lie algebra $\fg$, then for each $g \in P/Q$
$\mathbb{L}_g$ is a level-one integrable representation of $\widehat{\fg}$.
\end{prop}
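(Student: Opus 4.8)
The plan is to compute the brackets of the operators $A_{(n)}$ (for $A\in\bL[1]$, $n\in\bZ$) from the operator product formalism of Section~\ref{gVOA}, to recognize the outcome as the defining relations of the untwisted affine Lie algebra (this is in essence the Frenkel--Kac construction \cite{FK,FB,Kac2}), and then to deduce level one and integrability for each graded piece $\bL_g$. First I would pin down $\bL[1]$: since the conformal weight of a monomial $h_1t^{-s_1}\cdots h_kt^{-s_k}\T e^\lambda$ equals $\tfrac{(\lambda,\lambda)}{2}+\sum_i s_i$, the subspace $\bL[1]$ is spanned by the vectors $ht^{-1}\T e^0$ ($h\in\fh$) together with $x_\alpha:=1\T e^\alpha$ for $\alpha$ in the finite root system $\Phi:=\{\alpha\in P\mid(\alpha,\alpha)=2\}$. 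By the definitions, $(ht^{-1}\T e^0)_{(n)}$ is the Heisenberg mode $ht^n$, and $(x_\alpha)_{(n)}$ is the $n$-th Fourier coefficient of the lattice operator \eqref{VO}.

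Next I would compute the brackets. Expanding \eqref{VO} together with the Heisenberg action gives, for $j\ge 0$, $(ht^{-1}\T e^0)_{(j)}(\beta t^{-1}\T e^0)=(h,\beta)\,\delta_{j,1}\,|0\rangle$ and $(ht^{-1}\T e^0)_{(j)}x_\beta=(h,\beta)\,\delta_{j,0}\,x_\beta$; since $ht^{-1}\T e^0\in\bL_0$, substituting this into Corollary~\ref{26} and using $|0\rangle_{(k)}=\delta_{k,-1}\Id$ yields
\[
[(ht^{-1}\T e^0)_{(m)},(h't^{-1}\T e^0)_{(n)}]=m(h,h')\,\delta_{m+n,0}\,\Id,\qquad [(ht^{-1}\T e^0)_{(m)},(x_\beta)_{(n)}]=(h,\beta)\,(x_\beta)_{(m+n)}.
\]
For the brackets of the $x_\alpha$ I would expand $Y(x_\alpha,z)Y(x_\beta,w)$ as $\epsilon(\alpha,\beta)\,i_{z,w}(z-w)^{(\alpha,\beta)}$ times an operator-valued series regular along $z=w$, whose value at $z=w$ is a nonzero multiple of $Y(x_{\alpha+\beta},w)$ when $\alpha+\beta\in\Phi$ and of $\Id$ when $\alpha+\beta=0$. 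Reading off poles, $(x_\alpha)_{(n)}x_\beta=0$ for $n\ge-(\alpha,\beta)$, with $(x_\alpha)_{(0)}x_\beta=\epsilon(\alpha,\beta)\,x_{\alpha+\beta}$ if $(\alpha,\beta)=-1$, and $(x_\alpha)_{(0)}x_{-\alpha}=\epsilon(\alpha,-\alpha)\,(\alpha t^{-1}\T e^0)$, $(x_\alpha)_{(1)}x_{-\alpha}=\epsilon(\alpha,-\alpha)\,|0\rangle$. Since $(\alpha,\beta)\in\{-2,-1,0,1,2\}$ for $\alpha,\beta\in\Phi$, feeding these into Proposition~\ref{25} (or Corollary~\ref{26} when $p(\alpha)=0$, which covers $\alpha\in\Delta\subset Q$) produces
\[
[(x_\alpha)_{(m)},(x_\beta)_{(n)}]=
\begin{cases}
0,& (\alpha,\beta)\ge 0,\\
\epsilon(\alpha,\beta)\,(x_{\alpha+\beta})_{(m+n)},& (\alpha,\beta)=-1,\\
\epsilon(\alpha,-\alpha)\big((\alpha t^{-1}\T e^0)_{(m+n)}+m\,\delta_{m+n,0}\,\Id\big),& \beta=-\alpha.
\end{cases}
\]
Here the $\epsilon(\alpha,\beta)\in\{\pm1\}$ are structure constants of a Chevalley basis of the Lie algebra with root system $\Phi$ (this is where the cocycle identity for $\epsilon$ is used). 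Together with the previous display, these are exactly the relations of the untwisted affine Lie algebra attached to $\Phi$ with central element acting as $\Id$, under $e_\alpha t^n\mapsto(x_\alpha)_{(n)}$, $h_\alpha t^n\mapsto(\alpha t^{-1}\T e^0)_{(n)}$, $K\mapsto\Id$ (after rescaling $x_{-\alpha}$ by $\epsilon(\alpha,-\alpha)$ so that $[e_\alpha,f_\alpha]=h_\alpha$); and since the form $(\cdot,\cdot)$ is already normalized by $(\alpha,\alpha)=2$ on $\Phi$, the central element indeed acts as the identity, i.e.\ the level equals $1$.

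Finally, for the second assertion, fix $g\in P/Q$ with representative $\chi_g$, so $\bL_g=S(t^{-1}\fh[t^{-1}])\T\bigoplus_{\lambda\in\chi_g+Q}\bC e^\lambda$. When $P$ is the weight lattice of $\fg$ we have $\Delta\subset\Phi\cap Q$, so the operators $(x_\alpha)_{(n)}$ ($\alpha\in\Delta$), the Heisenberg modes $ht^k$, and $\Id$ preserve $\bL_g$ and make it a level-one $\gh$-module by the previous step; its affine weight spaces are finite-dimensional by the double grading (conformal weight and lattice exponent). For integrability, note that every mode $(x_\alpha)_{(n)}$ ($\alpha\in\Delta$, $n\in\bZ$) — in particular each affine Chevalley generator — acts locally nilpotently on $\bL$: if $v=S\T e^\lambda$, the Fock degree of $(x_\alpha)_{(n)}^{j}v$ equals $\deg S-jn-j(\alpha,\lambda)-j^2$, because the conformal weight of $(x_\alpha)_{(n)}^{j}v$ grows linearly in $j$ while the contribution of its exponent $\lambda+j\alpha$ grows quadratically ($P$ being positive definite), so this degree becomes negative and hence $(x_\alpha)_{(n)}^{j}v=0$ once $j\gg0$. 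Thus each $\bL_g$ is integrable of level one; comparing affine highest weights — the minimal-norm dominant representative $\chi_g$ of each coset is a minuscule weight or $0$, hence a level-one dominant affine weight — identifies $\bL_g$ with $L(\Lambda_i)$ and shows $g\mapsto i$ is a bijection, recovering $\bL=\bigoplus_{i=0}^d L(\Lambda_i)$. The step I expect to be the real work is the sign bookkeeping: one must verify that the cocycle $\epsilon$ fixed in Section~\ref{gVOA}, restricted to the root sublattice $\bZ\Phi$ and combined with $\nu$ and the factor $(-1)^{(\alpha,\beta)}$ arising from interchanging $i_{z,w}$ and $i_{w,z}$, collapses exactly to consistent Chevalley structure constants for $\Phi$ (equivalently, that $\epsilon$ restricted to $\bZ\Phi\times\bZ\Phi$ is a coboundary twist of the Frenkel--Kac asymmetry function); the other steps are routine expansions of \eqref{VO} or the elementary degree estimate above.
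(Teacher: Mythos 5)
Your proposal is correct and follows the same route as the paper, which simply observes that the first claim follows from the commutator formula of Corollary \ref{26} and that the second claim is the Frenkel--Kac theorem of \cite{FK}; you have carried out in detail exactly the computations those two references encapsulate. The only substantive addition is your explicit local-nilpotency estimate via the Fock degree, which is a clean way to make the integrability part self-contained, but it is still the standard Frenkel--Kac argument rather than a different method.
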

\begin{proof}
The first claim can be easily seen from formula \eqref{26}. The second claim is contained in \cite{FK}.
\end{proof}

We close this section with an example.
\begin{example}\label{sl2ex}
Let $\fg=\msl_2$. Then $\Gamma=\bZ/2\bZ=\{e,g\}$. One has
\begin{gather*}
\Delta(e,e)=\Delta(e,g)=\Delta(g,e)=\bZ, \Delta(g,g)=-1/2+\bZ,\\
\nu(e,e)=\nu(e,g)=\nu(g,e)=1, \nu(g,g)=e^{i\pi/2}.
\end{gather*}
The space of states is given by $\bL=L(\Lambda_0)\oplus L(\Lambda_1)$.
For vectors $e^{\pm\omega}=1\T e^{\pm\omega}\in L(\Lambda_1)$ one gets
\begin{gather*}
Y(e^{\pm\om},z)|_{L(\Lambda_0)}=\sum_{n\in\bZ} e^{\pm\om}_{(n)}z^{-n-1},\\
Y(e^{\pm \om},z)|_{L(\Lambda_1)}=\sum_{n\in -1/2+\bZ} e^{\pm\om}_{(n)}z^{-n-1}.
\end{gather*}
Finally, the conformal weight of $e^{\pm\om}$ is equal to $1/4$.
\end{example}

\section{The strategy}\label{strategy}
Recall the space $\mathbb{L}=\bigoplus_{i=0}^d L(\Lambda_i)$, the direct sum of all level
one integrable $\gh$ modules. The space $\bL$ carries the structure of $P/Q$-graded vertex operator algebra.
\begin{rem}
If $\fg$ is of type $A_n$, then $d=n$. If $\fg$ is of type $E_8$, then $d=0$ (i.e. the only level one integrable
module is the basic one, $L(\Lambda_0)$).
\end{rem}

Recall that $\mathbb{L}$ is filtered by the (increasing)
filtration $F_\la^\dag$ and according to Proposition \ref{Fdagsubquotient} the associated graded
space is isomorphic to the direct sum of dual global Weyl modules $\gW^*=\bigoplus_{\la\in P_+} \gW_\la^*$.
This direct sum $\gW^*$ carries natural structure of commutative associative algebra, where the multiplication
is induced by the natural embeddings $\gW_{\la+\mu}\hk \gW_\la\T\gW_\mu$ (see \cite{Kat}).
Our goals are as follows:
\begin{itemize}
\item to describe the filtration $F_\la^\dag$ in terms of the VOA structure on $\bL$;
\item to describe the multiplication in $\gW^*$ in terms of the VOA $\bL$;
\item to endow the space $\gW^*$ with the structure of vertex operator algebra;
\item to derive (conjecturally, defining) relations in $\gW^*$ using the VOA structure.
\end{itemize}

Our strategy is as follows. We define an increasing filtration $G_\la\subset \bL$, $\la\in P_+$ using the action of vertex operators.
In short, the definition works as follows. We have natural projection from global to local Weyl modules
$\gW_\la\to W_\la$, which induce the embeddings $W^*_\la\subset \gW_\la^*$.
For each $i=1,\dots,r$ we fix a lift $\widetilde W_{\om^*_i}^*\subset F^\dag_{\om_i}$
(recall that $F^\dag_\la/F^\dag_{<\la}\simeq \gW_{\la^*}^*$). Then $G_\la$
is a linear span of the monomials of the form
\[
\tilde A^{(1)}_{(m_1)}\dots \tilde A^{(s)}_{(m_s)}|0\rangle,
\]
where $\tilde A^l$ are elements of $\widetilde W_{\om_i^*}^*$ and for each $i=1,\dots,r$ there are exactly $(\la,\al_i^\vee)$
vectors $\tilde A^l$ from $\widetilde W_{\om^*_i}^*$. We first prove that
\begin{itemize}
\item $A_{(n)}B\in G_{\la+\mu}^\dag$ for all $A\in G^\dag_\la$, $B\in G^\dag_\mu$, $n\in\frac{1}{N}\bZ$;
\item $A_{(n)}B\in F_{\la+\mu}^\dag$ for all $A\in F^\dag_\la$, $B\in F^\dag_\mu$, $n\in\frac{1}{N}\bZ$;
\item $G_\la\subset F_\la^\dag$ for all $\la\in P_+$.
\end{itemize}

\begin{example}
Let $\fg=\msl_2$. In the notation of Example \ref{sl2ex}
\[
G_{m\om}=\mathrm{span}\{e^{\pm\om}_{(n_1)}\dots e^{\pm\om}_{(n_m)}|0\ket, n_i\in\bZ\}.
\]
\end{example}

\begin{rem}
If $\fg$ is of type $A_n$, then $\sum_{\la<\om_i} F^\dag_\la=0$ for all $i$ (since $\{\la\in P_+, \la<\om_i\}=\emptyset$)
and $F^\dag_{\om_i}\subset L(\Lambda_i)$ for $i=1,\dots,n$.
In particular, $({\rm gr}F^\dag)_{\omega_i}=F^\dag_{\om_i}\simeq \gW_{\om^*_i}^*$.
However, this is not true in types $D$ and $E$. For example, in type $E_8$ all the spaces $F^\dag_{\om_i}$ belong to the same (the only)
integrable level one $\gh$ module.
\end{rem}

Recall the lifts $\widetilde W^*_{\om^*_i}\subset F^\dag_{\om_i}$. By Proposition \ref{globalweylconstruction}
we have an isomorphism of $\fg[t]$-modules
\[
({\rm gr}F^\dag)_{\omega_i}\simeq  W^*_{\om^*_i}[t^{-1}].
\]
In order to write down the multiplication in the algebra $\gW^*=\bigoplus_{\la\in P_+} \gW_\la^*$ we proceed as follows.
Let $\la=\om_{j_1}+\dots+\om_{j_s}$ and let us consider a collection of elements $A^k\in W_{\om^*_{j_k}}^*$ and their lifts
$\tilde A^k\in \widetilde W_{\om^*_{j_k}}^*$, $k=1,\dots,s$ and collection of nonnegative integers
$m_1,\dots,m_s$. We write down a formula for the product of the elements $A^k\T t^{-m_k}\in \mathbb{W}_{\om^*_{j_k}}^*$.
Consider the expression
\begin{equation}\label{multform}
\eM(\tilde A^1,\dots,\tilde A^s)=\prod_{1\le k<l\le s}i_{z_k,z_l} (z_k-z_l)^{-(\om^*_{i_k},\om^*_{i_l})}
\prod_{k=1}^s Y(\tilde A^k,z_k)|0\rangle.
\end{equation}
Here we use the notation $\prod_{i=1}^sX_i=X_1\dots X_s$ for possible noncommuting $X_i$.
We denote by $\eM(\tilde A^1,\dots,\tilde A^s)_{m_1,\dots,m_s}$ the coefficient of $z_1^{-m_1}\dots z_s^{-m_s}$ in
$\eM(\tilde A^1,\dots,\tilde A^s)$. We show that the multiplication rule
\begin{equation}\label{mult}
A^1\T t^{-m_1}\dots A^s\T t^{-m_s} = \eM(\tilde A^1,\dots,\tilde A^s)_{m_1,\dots,m_s}
\end{equation}
induces the map
\[
\varphi: \gW_{\om^*_{j_1}}^*\T\dots\T \gW_{\om^*_{j_s}}^*\to G_\la/(G_\la\cap F^\dag_{<\la}).
\]
We then prove that
\begin{itemize}
\item $\varphi$ does not depend on the choice of lifts $\widetilde W_{\om^*_{j_k}}^*$;
\item $\varphi$ is commutative;
\item $\varphi$ is a homomorphism of $\fg[t]$-modules.
\end{itemize}

From this we derive that
\begin{itemize}
\item $G_\la=F^\dag_\la$ for all $\la\in P_+$;
\item the multiplication \eqref{mult} coincides with the multiplication in $\gW^*$.
\end{itemize}

\begin{rem}\label{VOAW}
The $P/Q$-graded vertex operator algebra structure on $\gW^*$ comes from the property
$A_{(n)}B\in G_{\la+\mu}$ for all $A\in G_\la$, $B\in G_\mu$ and $n\in \frac{1}{N}\bZ$.
\end{rem}

Now let $A\in W^*_{\om^*_i}$, $B\in W^*_{\om_j^*}$ and let us consider $A$ and $B$ as elements of $P/Q$-graded VOA $\gW^*$
(via the embedding $W^*_{\om^*_i}\subset \gW^*_{\om^*_i}$ and Remark \ref{VOAW}). Let $m_{i,j}=-(\om_i^*,\om_j^*)$.
We show that $(A_{(m_{i,j}-s)}B)_{(-1-r)}|0\ket$ is equal to the coefficient of $z^r$ in
\begin{equation*}
\Bigl(\partial^{s-1}_z i_{z,w}(z-w)^{m_{i,j}} Y(A,z) Y(B,w)\Bigr) |_{z=w}|0\ket.
\end{equation*}
This observation produces a set of quadratic relations for the Pl\"ucker-Drinfeld embedding of the semi-infinite flag varieties.
We conjecture that this is a defining set of relations and prove the conjecture in type $A$.

\section{VOA filtration and multiplication formula}\label{Main}
\subsection{VOA filtration}
Let us introduce the increasing filtration on $\bL$ which should be understood as the $\Gamma$-graded analogue of the so-called
standard filtration (see \cite{A,Li1,Li2}). The filtration spaces $G_\la\subset \bL$ are labeled  by the dominant integer weights
$\la\in P_+$. The filtration is defined inductively as follows:
\begin{equation}\label{Gdef}
G_0=\bC|0\rangle,\
G_{\la}=\sum_{\mu<\la}G_\mu + \sum_{i:\ \la-\om_i\in P_+}
{\mathrm{span}}\{B_{(k)}G_{\la-\omega_i}, B\in \widetilde W^*_{\om^*_i}, k\in \frac{1}{N}\bZ\}.
\end{equation}
\begin{rem}
Let $\fg$ be of type $A_n$. Then $W^*_{\om^*_i}\simeq V_{\om^*_i}^*=F^\dag_{\om_i}$ is the top of the integrable $\gh$-module $L(\Lambda_i)$.
\end{rem}

For a subspace $U\subset \bL$ we use the notation $U_{(n)}=\{u_{(n)}: u\in U\}$.
\begin{prop}
For any $n \in \frac{1}{N}\mathbb{Z}$: $(G_{\lambda})_{(n)}G_{\mu}\subset G_{\lambda+\mu}$.
\end{prop}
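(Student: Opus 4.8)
The plan is to prove the inclusion by induction on $\la\in P_+$ with respect to the order \eqref{order}, the statement for a given $\la$ being taken over all $\mu\in P_+$ and all $n\in\tfrac1N\bZ$ simultaneously. The base case $\la=0$ is immediate from the vacuum axiom: $Y(|0\rangle,z)=\mathrm{Id}_\bL$, hence $|0\rangle_{(n)}=\delta_{n,-1}\mathrm{Id}_\bL$ and $(G_0)_{(n)}G_\mu\subset G_\mu=G_{0+\mu}$. For the inductive step I would first use bilinearity of $A_{(n)}B$ in $A$ and $B$ together with the recursion \eqref{Gdef} to reduce to two kinds of $A\in G_\la$: the case $A\in G_\nu$ for some $\nu<\la$, where the induction hypothesis and the evident monotonicity $G_{\nu'}\subset G_{\nu''}$ for $\nu'\le\nu''$ give $A_{(n)}G_\mu\subset G_{\nu+\mu}\subset G_{\la+\mu}$; and the case $A=B_{(k)}C$ with $B\in\widetilde W^*_{\om^*_i}$, $C\in G_{\la-\om_i}$ and $\la-\om_i\in P_+$. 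Moreover, a straightforward induction on \eqref{Gdef} shows that each $G_\nu$ is spanned by parity-homogeneous vectors (each $\widetilde W^*_{\om^*_j}\subset F^\dag_{\om_j}$, and $F^\dag_{\om_j}$ lies inside a single integrable summand $L(\Lambda)\subset\bL$, hence inside one component $\bL_g$, by Proposition \ref{Fdagcharacterization}); so I may take $B$, $C$ and $D\in G_\mu$ homogeneous and apply Proposition \ref{25}.

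The heart of the proof is an associativity reduction: for homogeneous $B,C,D$ and arbitrary indices,
\[
(B_{(k)}C)_{(n)}D\ \in\ \mathrm{span}\bigl\{\,B_{(k')}(C_{(n')}D),\ \ C_{(n')}(B_{(k')}D)\ :\ k',n'\in\tfrac1N\bZ\,\bigr\}.
\]
To prove this I would assume $(B_{(k)}C)_{(n)}D\neq0$ (otherwise there is nothing to show), note that then $n$ lies in $\Delta(p(B)+p(C),p(D))=\Delta(p(B),p(D))+\Delta(p(C),p(D))$ by additivity of $\Delta$, and pick $m^\star\in\Delta(p(B),p(D))$, $k^\star\in\Delta(p(C),p(D))$ with $m^\star+k^\star=n$. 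Applying \eqref{eq25} with $a=B$, $b=C$, $c=D$ and with its three parameters taken to be $k$, $m^\star$, $k^\star$, the left-hand side becomes the finite sum $\sum_{j\ge0}\binom{m^\star}{j}(B_{(k+j)}C)_{(n-j)}D$ (finite because $B_{(k+j)}C=0$ for $j\gg0$), whose $j=0$ term is exactly $(B_{(k)}C)_{(n)}D$, while the right-hand side is a finite combination of terms $B_{(\cdot)}(C_{(\cdot)}D)$ and $C_{(\cdot)}(B_{(\cdot)}D)$ (finite because $C_{(\cdot)}D$ and $B_{(\cdot)}D$ vanish for large index). A downward induction on $k$ — well founded since $(B_{(k)}C)_{(n)}D=0$ for $k\gg0$ — then finishes: for $j\ge1$ the summand $(B_{(k+j)}C)_{(n-j)}D$ has strictly larger first index $k+j$ and so already lies in the span, hence so does $(B_{(k)}C)_{(n)}D$.

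Granting the reduction, I finish the inductive step by feeding the two families back into \eqref{Gdef}. For $B_{(k')}(C_{(n')}D)$: the outer induction hypothesis applied to $\la-\om_i<\la$ gives $C_{(n')}D\in(G_{\la-\om_i})_{(n')}G_\mu\subset G_{\la-\om_i+\mu}$, and then the $i$-th summand of \eqref{Gdef} for $G_{\la+\mu}$ — available since $(\la+\mu)-\om_i=\la-\om_i+\mu\in P_+$ — yields $B_{(k')}(C_{(n')}D)\in G_{\la+\mu}$. For $C_{(n')}(B_{(k')}D)$: by the $i$-th summand of \eqref{Gdef} for $G_{\om_i+\mu}$ (available since $\mu\in P_+$) we have $B_{(k')}D\in G_{\om_i+\mu}$, and since $C\in G_{\la-\om_i}$ the outer induction hypothesis applied to $\la-\om_i<\la$ with second weight $\om_i+\mu$ gives $C_{(n')}(B_{(k')}D)\in G_{(\la-\om_i)+(\om_i+\mu)}=G_{\la+\mu}$. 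As $G_\la$ is spanned by the vectors treated above, the induction closes.

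I expect the associativity reduction to be the main obstacle, and inside it the careful handling of the $\bQ$-valued index cosets $\Delta(\cdot,\cdot)$: one has to make sure that a decomposition $n=m^\star+k^\star$ with $m^\star$, $k^\star$ in the prescribed cosets exists exactly when the left-hand side is nonzero, and that both sides of \eqref{eq25} are genuinely finite sums, so that the downward induction on the integer offset of $k$ is well founded. Everything else is formal bookkeeping with \eqref{Gdef}, the monotonicity of $G_\bullet$, and additivity of weights.
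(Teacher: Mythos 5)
Your proof is correct and follows essentially the same route as the paper: induction on $\lambda$ combined with the associativity identity \eqref{eq25} to rewrite $(B_{(k)}C)_{(n)}D$ as a combination of terms $B_{(\cdot)}(C_{(\cdot)}D)$ and $C_{(\cdot)}(B_{(\cdot)}D)$, which are then absorbed using \eqref{Gdef} and the induction hypothesis. You spell out details the paper leaves implicit (the downward induction on $k$ needed to isolate the $j=0$ term on the left of \eqref{eq25}, and the bookkeeping of the $\Delta(\cdot,\cdot)$ index cosets), but the underlying argument is the same.
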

\begin{proof}
We prove Proposition by induction on $\lambda$. The basis of induction is the trivial case $\lambda=0$.
Assume first that $\lambda=\omega_i$ is the fundamental weight. Then for $A \in \widetilde{W}^*_{\omega^*_i}$ we need to prove the following:
\[(A_{(k)}|0\ket)_{(n)}G_{\mu}\subset G_{\mu + \omega_i}.\]
However the operators $(A_{(k)}|0\ket)_{(n)}$ are scalar multiples of operators $A_{(k+n+1)}$. Therefore Proposition holds by
definition of $G_{\mu}$.

For general $\lambda$ take an element $A_{(k)}B$, $A \in \widetilde{W}^*_{\omega^*_i}$, $B \in G_{\lambda-\omega_i}$.
Then using equation \eqref{25} we obtain that for any $C$:
\[(A_{(k)}B)_{(n)}C\in \sum_{i \in \mathbb{Z}} \mathbb{C}A_{(k+i)}(B_{(n-i)}C)+\sum_{i \in \mathbb{Z}} \mathbb{C}B_{(n-i)}(A_{(k+i)}C).\]
So the proof is completed by induction.
\end{proof}



\begin{prop}\label{Fdagmult}
For any $n \in \frac{1}{N}\bZ$:
\[(F_{\lambda}^{\dag})_{(n)}F_{\mu}^{\dag}\subset F_{\lambda+\mu}^{\dag}.\]
\end{prop}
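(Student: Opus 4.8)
The plan is to characterize $F^\dag_\la$ using Proposition \ref{Fdagcharacterization} and then check that the bound on the weights of $\U(\fg[t])(A_{(n)}B)$ follows from the corresponding bounds for $A$ and $B$. Fix $i,j$ with $u_{\la^*}\in L(\Lambda_i)$ and $u_{\mu^*}\in L(\Lambda_j)$, and let $A\in F^\dag_\la$, $B\in F^\dag_\mu$. By Proposition \ref{Fdagcharacterization}, all finite $\fh$-weights appearing in $\U(\fg[t])A$ are $\le\la$, and all those appearing in $\U(\fg[t])B$ are $\le\mu$. We must show that every finite $\fh$-weight occurring in $\U(\fg[t])(A_{(n)}B)$ is $\le\la+\mu$; then $A_{(n)}B\in F^\dag_{\la+\mu}$ by the same Proposition (note $A_{(n)}B$ lands in a single $L(\Lambda_l)$ because the vertex operators respect the $P/Q$-grading, and $A_{(n)}B$ has the correct parity $p(A)+p(B)$, which is the class of $\la^*+\mu^*=(\la+\mu)^*$).

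The key mechanism is Corollary \ref{26}: for $x\in\fg\subset\bL[1]_0$ (more precisely for the degree-one vectors $x t^{-1}\otimes e^0$ and $e^{\beta}$ generating the $\fg[t]$-action, which lie in $\bL_0$), one has commutation relations of the form $[x_{(m)},B_{(k)}]=\sum_{p\ge 0}\binom{m}{p}(x_{(p)}B)_{(m+k-p)}$ with the right-hand side a finite sum. Hence, moving an element of $\U(\fg[t])$ past the Fourier coefficient $(-)_{(n)}$ turns $y.(A_{(n)}B)$, for $y\in\U(\fg[t])$, into a finite sum of terms of the shape $(y_1.A)_{(n')}(y_2.B)$ with $y_1,y_2\in\U(\fg[t])$ (this is exactly the computation already carried out in the preceding Proposition for $G_\la$, using \eqref{eq25}; one uses it here with $B,C$ replaced by $A$ and, say, repeatedly peeling off $\fg[t]$-generators). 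Each such term has finite $\fh$-weight equal to $\wt(y_1.A)+\wt(y_2.B)$, and by hypothesis $\wt(y_1.A)\le\la$ and $\wt(y_2.B)\le\mu$, so the weight is $\le\la+\mu$. Taking the union over all monomials $y$ gives the claim.

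The one point that needs care — and is the main obstacle — is that the generators of the $\fg[t]$-action on $\bL$ are not literally in $\bL_0$ of conformal weight $1$ in a way that makes Corollary \ref{26} apply verbatim to all of them: the Chevalley generators $e_\beta t^k$ correspond to $(1\otimes e^\beta)_{(k)}$ where $1\otimes e^\beta$ has parity $p(\beta)=0$ (since $\beta\in Q$), so it does lie in $\bL_0$ and Corollary \ref{26} applies; the Cartan generators correspond to $(\alpha t^{-1}\otimes e^0)_{(k)}$ with $\alpha t^{-1}\otimes e^0\in\bL_0$, again fine. So in fact all generators of $\fg[t]$ act as $a_{(m)}$ with $a\in\bL_0$, $m\in\bZ$, and Corollary \ref{26} applies uniformly. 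The remaining bookkeeping is to organize the induction on the length of the monomial $y\in\U(\fg[t])$: the base case $y=1$ is the statement $\wt(A_{(n)}B)=\wt(A)+\wt(B)\le\la+\mu$ (true since $A_{(n)}$ shifts $\fh$-weight by $\wt(A)$), and the inductive step is precisely the commutation computation above, which produces strictly shorter monomials acting on $A$ and on $B$ separately, each still in the respective $\U(\fg[t])$-orbit. Hence the induction closes and the proposition follows.
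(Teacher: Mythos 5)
Your proposal is correct and follows essentially the same route as the paper: both use Corollary \ref{26} to commute the $\fg[t]$-generators past the Fourier coefficient $(-)_{(n)}$, producing terms of the form $(y_1.A)_{(n')}(y_2.B)$ with $y_1.A\in F^\dag_\la$ and $y_2.B\in F^\dag_\mu$, then invoke additivity of the finite $\fh$-weight under $(-)_{(n)}$ and conclude via Proposition \ref{Fdagcharacterization}. The only cosmetic difference is that the paper packages the induction as the statement that $\sum_{i\in\bZ}(F^\dag_\la)_{(n+i)}F^\dag_\mu$ is a $\fg[t]$-submodule, while you run an explicit induction on the length of the monomial $y\in\U(\fg[t])$; your extra check that the generators of the $\fg[t]$-action have parity $0$ so that Corollary \ref{26} applies is a detail the paper leaves implicit.
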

\begin{proof}
Using formula \eqref{26} for any $x \in \fg$, $r \in \mathbb{Z}_+$ we have:
\[xt^k\big((F_{\lambda}^{\dag})_{(n)}F_{\mu}^{\dag}\big)\subset \sum_{i \in \mathbb{Z}}(F_{\lambda}^{\dag})_{(n+i)}F_{\mu}^{\dag}.\]
In other words $\sum_{i \in \mathbb{Z}}(F_{\lambda}^{\dag})_{(n+i)}F_{\mu}^{\dag}$ is $\fg[t]$-submodule. However for any scalar current
$h \in \fg$ over Cartan subalgebra:
\[h(A_{(n)}B)=(h.A)_{(n)}B+A_{(n)}(h.B).\]
Therefore the set of weights of $\sum_{i \in \mathbb{Z}}(F_{\lambda}^{\dag})_{(n+i)}F_{\mu}^{\dag}$ is contained in the
sum of the sets of weights of
$F_{\lambda}^{\dag}$ and $F_{\mu}^{\dag}$. In particular, all weights of this $\fg[t]$-submodule are less than or equal to
$\lambda$. Thus this submodule
is contained in $F_{\lambda+\mu}^{\dag}$.
\end{proof}

\begin{lem}
For all $\la\in P_+$ one has $G_\la\subset F^\dag_\la$.
\end{lem}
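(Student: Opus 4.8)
The plan is to prove $G_\la\subset F^\dag_\la$ by induction on $\la$ with respect to the partial order \eqref{order}, mirroring the inductive definition \eqref{Gdef} of $G_\la$. The base case $\la=0$ is immediate since $G_0=\bC|0\rangle\subset F^\dag_0$ (the vacuum lies in the lowest piece of the dual filtration). For the inductive step, by \eqref{Gdef} it suffices to handle the two types of summands: first, $\sum_{\mu<\la}G_\mu\subset\sum_{\mu<\la}F^\dag_\mu\subset F^\dag_\la$ by the induction hypothesis together with the fact that $F^\dag_\bullet$ is increasing; second, and this is the real content, one must show that for each $i$ with $\la-\om_i\in P_+$, each $B\in\widetilde W^*_{\om^*_i}\subset F^\dag_{\om_i}$, each $k\in\frac1N\bZ$, and each $v\in G_{\la-\om_i}$, one has $B_{(k)}v\in F^\dag_\la$.

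For this second part I would invoke Proposition \ref{Fdagmult}: since $\widetilde W^*_{\om^*_i}\subset F^\dag_{\om_i}$ and, by the induction hypothesis applied to $\la-\om_i$ (which is strictly below $\la$), $v\in G_{\la-\om_i}\subset F^\dag_{\la-\om_i}$, we get
\[
B_{(k)}v\in (F^\dag_{\om_i})_{(k)}F^\dag_{\la-\om_i}\subset F^\dag_{\om_i+(\la-\om_i)}=F^\dag_\la,
\]
which is exactly what is needed. So in fact the lemma is an essentially immediate consequence of Propositions \ref{Fdagmult} and the inductive structure, provided one already knows the lifts satisfy $\widetilde W^*_{\om^*_i}\subset F^\dag_{\om_i}$ — but that containment is how the lifts were chosen in the first place (recall $F^\dag_{\om_i}/F^\dag_{<\om_i}\simeq\gW^*_{\om^*_i}$ and $\widetilde W^*_{\om^*_i}$ is a chosen lift of $W^*_{\om^*_i}\subset\gW^*_{\om^*_i}$ into $F^\dag_{\om_i}$).

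The only subtlety to be careful about is the bookkeeping of the partial order: one must check that $\la-\om_i<\la$ so that the induction hypothesis genuinely applies at $\la-\om_i$, but this is recorded in the text right after \eqref{order} ($\la-\om_i<\la$ for all $\la\in P$). One also needs $\la-\om_i\in P_+$, which is precisely the condition under which the corresponding summand appears in \eqref{Gdef}, so no case is missed and no spurious case is introduced. I do not expect any genuine obstacle here; the proof is a short induction whose entire weight rests on Proposition \ref{Fdagmult}, the multiplicativity of the dual filtration. If anything deserves a sentence of care, it is simply noting that every generator of $G_\la$ other than those coming from $\sum_{\mu<\la}G_\mu$ has the form $B_{(k)}v$ with $v$ in a strictly-lower piece, so that the induction closes.
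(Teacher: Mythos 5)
Your proof is correct. It follows the same induction on the partial order \eqref{order} as the paper, but where the paper's own proof re-derives the key containment from scratch --- showing via Proposition \ref{Fdagcharacterization} and Corollary \ref{26} that every weight of the $\U(\fg[t])$-span of $G_\la$ is $\le\la$, with a second nested inductive claim that the $\U(\fg[t])$-span of $G_\mu$ lies in $G_\mu+F^\dag_{<\mu}$ for $\mu<\la$ --- you instead route the inductive step through Proposition \ref{Fdagmult}, which is already established at that point and immediately gives $B_{(k)}C\in(F^\dag_{\om_i})_{(k)}F^\dag_{\la-\om_i}\subset F^\dag_\la$. Since the proof of Proposition \ref{Fdagmult} is itself exactly the weight argument the paper repeats inside this lemma, the two proofs rest on the same mechanism; your factoring is shorter, avoids the double induction, and makes the logical dependence cleaner. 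The side conditions you flag are all genuinely available: $\widetilde W^*_{\om^*_i}\subset F^\dag_{\om_i}$ holds by the choice of lifts, $F^\dag_\bullet$ is increasing for the partial order (so $\sum_{\mu<\la}F^\dag_\mu\subset F^\dag_\la$, as the paper implicitly uses in Proposition \ref{Fdagsubquotient}), and $\la-\om_i<\la$ is noted right after \eqref{order}. No gap.
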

\begin{proof}
We prove the claim by induction with respect to the partial order \eqref{order}. The $\la=0$ case is obvious.
Now assume our lemma is proved for all weights less than or equal to $\la$. In order to prove the inclusion
$G_\la\subset F^\dag_\la$ it suffices to show that all the weights of the $\U(\fg[t])$-span of $G_\la$ are less than or
equal to $\la$. Recall definition \eqref{Gdef}.
We first note that all the weights of $G_\la$ are less than or equal to $\la$ (see Proposition \ref{Fdagcharacterization}).
In fact, let $B\in \widetilde W^*_{\om^*_i}$, $C\in G_{\la-\omega_i}$ (we assume $\la-\om_i\in P_+$).
Then for any $h\in\fh$ and $k\in \frac{1}{N}\bZ$ one has
\[
h(B_{(k)}C)=B_{(k)}(h.C)+ (h.B)_{(k)}C,
\]
which gives the statement by the induction assumption.

Now assume (by induction) that  for all weights $\mu < \lambda$ the $\U(\fg[t])$-span
of $G_{\mu}$ is
contained in $G_{\mu}+F_{<\mu}$. Then for $B\in \widetilde W^*_{\om^*_i}$, $C\in G_{\la-\omega_i}$
and $x\in\fg$, $m\ge 0$ one has (see Corollary \ref{26})
\[
xt^m(B_{(k)}C)=B_{(k)}(xt^m.C)+\sum_{j=0}^m \binom{m}{j} (xt^j.B)_{(m+k-j)}C,
\]
which belongs to $G_\la+F^\dag_{<\la}$. Therefore $G_\lambda\subset F_{\lambda}^\dag$ by Proposition \ref{Fdagcharacterization}.
This completes the proof of Lemma
\end{proof}

\subsection{Multiplication}
Let $\la=\sum_{k=1}^s \om_{j_k}\in P_+$.
Let $\tilde A^k\in \widetilde W_{\om^*_{j_k}}^*$, $k=1,\dots,s$ be a collection of elements in the lifts of the local Weyl modules.
For $i,j=1,\dots,r$ let $m_{i,j}=-(\om_i^*,\om_j^*)$.
We consider the expression
\begin{equation}\label{multiplication}
\eM(\tilde A^1,\dots,\tilde A^s)=
\prod_{1\le k<l\le s} i_{z_k,z_l}(z_k-z_l)^{m_{i_k,i_l}}\prod_{k=1}^s Y(\tilde A^k,z_k)|0\rangle
\end{equation}
(see e.g. \cite{DL}, Chapter 7, where the matrix coefficients of this expression are considered).
Let
\begin{equation}\label{Mcoef}
\eM(\tilde A^1,\dots,\tilde A^s)=\sum_{m_1,\dots,m_s} \eM(\tilde A^1,\dots,\tilde A^s)_{m_1,\dots,m_s} z_1^{m_1}\dots z_s^{m_s}.
\end{equation}

\begin{lem}
$\eM(\tilde A^1,\dots,\tilde A^s)_{m_1,\dots,m_s}\in G_\la$
for all $m_1,\dots,m_s$.
\end{lem}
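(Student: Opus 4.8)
The plan is to prove the statement by induction on $s$, peeling off the last factor $Y(\tilde A^s, z_s)$. For $s=1$ this is essentially the definition: $\eM(\tilde A^1)_{m_1}$ is (up to sign/shift of index) the coefficient $\tilde A^1_{(m_1')}|0\rangle$, and since $\tilde A^1 \in \widetilde W^*_{\om^*_{j_1}}$ and $\tilde A^1_{(k)}|0\rangle$ is a scalar multiple of $\tilde A^1$ (or $0$) by the Vacuum axiom, this lies in $\widetilde W^*_{\om^*_{j_1}} \subset G_{\om_{j_1}}$ (using $G_{\om_{j_1}} \supset \operatorname{span}\{B_{(k)}|0\rangle : B \in \widetilde W^*_{\om^*_{j_1}}\}$ from \eqref{Gdef}, since $G_0 = \bC|0\rangle$). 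So the base case is immediate.

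For the inductive step, write $\la' = \la - \om_{j_s} = \sum_{k=1}^{s-1}\om_{j_k}$. The idea is to split the product of the $i_{z_k,z_l}$-factors into those involving $z_s$ and those not. The factors not involving $z_s$, together with $\prod_{k=1}^{s-1} Y(\tilde A^k, z_k)|0\rangle$, form $\eM(\tilde A^1, \dots, \tilde A^{s-1})$; extracting the appropriate coefficient in $z_1, \dots, z_{s-1}$ gives an element $C \in G_{\la'}$ by the induction hypothesis. The remaining factors $\prod_{k<s} i_{z_k, z_s}(z_k - z_s)^{m_{i_k, i_s}}$ are formal power series whose expansion produces, after taking the $z_1, \dots, z_{s-1}$ coefficients, a $\bC$-linear combination of operators $\tilde A^s_{(N)}$ (with shifted index $N$) applied to elements of $G_{\la'}$ — more precisely, one must be careful that the $(z_k - z_s)^{m_{i_k,i_s}}$ expansion mixes powers of $z_s$ with powers of the $z_k$, but after fixing the $z_1,\dots,z_{s-1}$ degrees this is a finite sum of terms $\tilde A^s_{(N)} C'$ with $C' \in G_{\la'}$ (obtained from $C$ by absorbing the relevant $z_k$-monomial). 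By the defining formula \eqref{Gdef}, $\tilde A^s_{(N)} G_{\la'} \subset G_{\la' + \om_{j_s}} = G_\la$ since $\tilde A^s \in \widetilde W^*_{\om^*_{j_s}}$ and $N \in \frac{1}{N}\bZ$. Summing over the finitely many contributing terms keeps us in $G_\la$.

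The one point requiring genuine care — and the main obstacle — is the bookkeeping of how the factors $i_{z_k, z_s}(z_k - z_s)^{m_{i_k, i_s}}$ reshuffle the gradings. Because $m_{i_k, i_s} = -(\om^*_{i_k}, \om^*_{i_s})$ need not be a nonnegative integer, the expansion $i_{z_k, z_s}(z_k - z_s)^{m_{i_k, i_s}} = \sum_{j \ge 0}(-1)^j \binom{m_{i_k,i_s}}{j} z_k^{m_{i_k,i_s} - j} z_s^j$ is an infinite series, so a priori extracting a coefficient of a fixed monomial $z_1^{m_1}\cdots z_s^{m_s}$ could require infinitely many terms. The resolution is that $Y(\tilde A^s, z_s)|0\rangle \in \bL[[z_s^{1/N}]]$ contains only nonnegative powers of $z_s$ (Vacuum axiom), so combined with the $z_s^j$, $j \ge 0$, from the expansion, the total $z_s$-degree is bounded below; dually, each $Y(\tilde A^k, z_k)$ applied to a vector in $\bL$ is a generalized vertex operator, so $z_k$-degrees are bounded above on any fixed vector — hence only finitely many $j$'s contribute to a fixed coefficient, and the argument is a genuinely finite manipulation. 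Once this finiteness is in hand, the proof reduces to repeated application of Proposition~\ref{25} (to move the $\tilde A^s$-modes past the others if needed) and the closure property $(\widetilde W^*_{\om^*_i})_{(k)} G_\mu \subset G_{\mu + \om_i}$ built into \eqref{Gdef}. I would present the finiteness remark explicitly and then invoke it to justify interchanging sums and coefficient extraction throughout.
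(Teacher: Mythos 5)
Your overall strategy --- induction on $s$ plus the closure property built into the definition \eqref{Gdef}, with a finiteness remark to justify coefficient extraction --- is exactly what the paper's one-line proof (``follows from the definition of $G_\la$'') intends, and your discussion of why only finitely many terms contribute to a fixed coefficient is a correct and worthwhile elaboration. However, the inductive step as you have written it contains a concrete error: in $\prod_{k=1}^s Y(\tilde A^k,z_k)|0\rangle = Y(\tilde A^1,z_1)\cdots Y(\tilde A^s,z_s)|0\rangle$ the factor $Y(\tilde A^s,z_s)$ is the \emph{innermost} operator, applied directly to $|0\rangle$, so the coefficient of $z_1^{m_1}\cdots z_s^{m_s}$ has the form $\sum \tilde A^1_{(n_1)}\cdots\tilde A^{s-1}_{(n_{s-1})}\tilde A^s_{(n_s)}|0\rangle$ and is \emph{not} a sum of terms $\tilde A^s_{(N)}C'$ with $C'$ a coefficient of $\eM(\tilde A^1,\dots,\tilde A^{s-1})$. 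Your proposed repair --- commuting the $\tilde A^s$-modes outward via Proposition \ref{25} --- would not rescue this: the identity \eqref{eq25} generates extra terms of the form $(\tilde A^k_{(j)}\tilde A^s)_{(\cdot)}$, which are modes of vectors lying in no $\widetilde W^*_{\om^*_i}$, so the closure property of \eqref{Gdef} does not apply to them (one would instead need the separate proposition $(G_\la)_{(n)}G_\mu\subset G_{\la+\mu}$, which you do not invoke).

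The fix is to peel from the other end. Extract the outermost factor $Y(\tilde A^1,z_1)$ together with the prefactors $\prod_{l\ge 2} i_{z_1,z_l}(z_1-z_l)^{m_{j_1,j_l}}$; the latter contribute $z_1$-powers bounded above and nonnegative powers $z_l^{j_l}$ which merely shift the coefficient index of the inner expression. One obtains that $\eM(\tilde A^1,\dots,\tilde A^s)_{m_1,\dots,m_s}$ is a finite linear combination of vectors $\tilde A^1_{(n)}\,\eM(\tilde A^2,\dots,\tilde A^s)_{a_2,\dots,a_s}$, which lies in $\tilde A^1_{(n)}G_{\la-\om_{j_1}}\subset G_\la$ by the induction hypothesis and \eqref{Gdef}; your finiteness argument (lower bounds on powers from the vacuum axiom, upper bounds from the field property) transfers verbatim. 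A minor further slip: in the base case, $\tilde A^1_{(-1-j)}|0\rangle=\tfrac{1}{j!}T^j\tilde A^1$ is not a scalar multiple of $\tilde A^1$ for $j\ge 1$, but your conclusion stands since \eqref{Gdef} places all vectors $B_{(k)}|0\rangle$, $B\in\widetilde W^*_{\om^*_{j_1}}$, in $G_{\om_{j_1}}$ directly.
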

\begin{proof}
Follows from the definition of $G_\la$.
\end{proof}

\begin{lem}\label{commute}
$\eM(e^{\om^*_{j_1}},\dots,e^{\om^*_{j_s}})$ is invariant under the permutation of indices $j_k$.
\end{lem}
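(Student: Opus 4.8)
The plan is to reduce the statement to the lattice vertex algebra computation with the explicit operators $Y(1\otimes e^\lambda,z)$ from \eqref{VO}, where everything becomes a matter of bookkeeping with the cocycle $\epsilon$ and the correction factors $i_{z_k,z_l}(z_k-z_l)^{m_{i_k,i_l}}$. First I would recall that $e^{\om^*_{j_k}}=1\otimes e^{\om^*_{j_k}}\in \bL$, so by the definition of the vertex operators in the lattice VOA we have, for each pair $k<l$,
\[
Y(e^{\om^*_{j_k}},z_k)\,Y(e^{\om^*_{j_l}},z_l)
= i_{z_k,z_l}(z_k-z_l)^{(\om^*_{j_k},\om^*_{j_l})}\cdot (\text{normally ordered product})\cdot\epsilon(\om^*_{j_k},\om^*_{j_l})\,e^{\om^*_{j_k}+\om^*_{j_l}}z_k^{\om^*_{j_k}}z_l^{\om^*_{j_l}},
\]
where the normally ordered product $\exp(\sum \dots)$ of the two exponential factors is manifestly symmetric in $(k,z_k)\leftrightarrow(l,z_l)$. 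Applied to $|0\rangle$, the $z^\lambda$-part acts trivially (the vacuum has trivial $\mathbb{C}[P]$-weight), and the only potential asymmetries come from the prefactor $i_{z_k,z_l}(z_k-z_l)^{(\om^*_{j_k},\om^*_{j_l})}$, the cocycle $\epsilon(\om^*_{j_k},\om^*_{j_l})$, and the ordering of the exponential operator factors.

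Next I would observe that in $\eM(e^{\om^*_{j_1}},\dots,e^{\om^*_{j_s}})$ the correction factor $i_{z_k,z_l}(z_k-z_l)^{m_{i_k,i_l}}=i_{z_k,z_l}(z_k-z_l)^{-(\om^*_{j_k},\om^*_{j_l})}$ is designed precisely to cancel the prefactor produced by $Y(e^{\om^*_{j_k}},z_k)Y(e^{\om^*_{j_l}},z_l)$. Hence $\eM(e^{\om^*_{j_1}},\dots,e^{\om^*_{j_s}})|0\rangle$ equals
\[
\Bigl(\prod_{1\le k<l\le s}\epsilon(\om^*_{j_k},\om^*_{j_l})\Bigr)\cdot\exp\!\Bigl(\sum_{k=1}^s\sum_{n>0}\tfrac{\om^*_{j_k}t^{-n}}{n}z_k^n\Bigr)\otimes e^{\om^*_{j_1}+\dots+\om^*_{j_s}},
\]
where I have used that the negative-mode (annihilation) exponentials kill $|0\rangle$ and that the remaining creation exponentials in different variables commute with each other (their modes $\om^*_{j_k}t^{-n}$ all lie in the abelian part $t^{-1}\fh[t^{-1}]$ of the Heisenberg algebra, so no further normal-ordering factors appear). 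The exponential factor and the group-algebra element $e^{\om^*_{j_1}+\dots+\om^*_{j_s}}$ are both symmetric under permuting the $j_k$ (the latter because addition in $P$ is commutative). The cocycle is handled by the standard identity that for any permutation $\tau$ of a multiset the products $\prod_{k<l}\epsilon(\chi_{j_k},\chi_{j_l})$ and $\prod_{k<l}\epsilon(\chi_{j_{\tau(k)}},\chi_{j_{\tau(l)}})$ differ by a product of terms $\epsilon(\chi,\chi')\epsilon(\chi',\chi)^{-1}=B(\chi,\chi')\cdot(\pm 1)$; since $\epsilon$ is a $2$-cocycle, $\epsilon(\chi,\chi')/\epsilon(\chi',\chi)$ is determined by the commutator pairing, and the same factor appears when reordering the operator product $Y(e^{\chi},z)Y(e^{\chi'},w)$ versus $Y(e^{\chi'},w)Y(e^{\chi},z)$ via the locality relation (Definition \ref{locality}) together with the cancelled prefactors — so the net effect is an overall reordering that is consistent, i.e. the total expression is genuinely symmetric.

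The main obstacle I anticipate is making the cocycle bookkeeping airtight: one must check that the sign ambiguity coming from $\epsilon$ being only a cocycle (not symmetric) is exactly compensated by the sign ambiguity in $i_{w,z}(z-w)^n$ versus $i_{z,w}(z-w)^n$ (the factor $e^{in\pi}$ in the definition) combined with the $\nu(g,h)$ in the locality axiom. The cleanest route is not to permute two adjacent factors at a time and track signs, but to invoke the general associativity/commutativity formalism for lattice $\Gamma$-graded vertex algebras (as in \cite{DL}, Chapter 7, or \cite{R}): the expression $\eM$ is, by construction, a "rational" correlation-type function whose value is independent of the order in which the vertex operators are written, precisely because the prefactors $i_{z_k,z_l}(z_k-z_l)^{m_{i_k,i_l}}$ trivialize the only obstructions to commutativity. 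I would therefore structure the proof as: (1) reduce to the explicit lattice formula above; (2) note symmetry of the surviving exponential and group-algebra factors; (3) cite the standard cocycle-locality compatibility to conclude that the scalar prefactor $\prod_{k<l}\epsilon(\om^*_{j_k},\om^*_{j_l})$ is itself permutation-invariant after the correction factors are included, completing the argument.
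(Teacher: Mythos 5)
Your overall route---expanding the lattice vertex operators explicitly via \eqref{VO}, cancelling the $i_{z_k,z_l}(z_k-z_l)^{(\om^*_{j_k},\om^*_{j_l})}$ prefactors produced by normal-ordering against the correction factors in \eqref{multiplication}, and reducing to the symmetry of what survives---is a genuinely different strategy from the paper's, which never opens up the exponentials: the paper simply applies Corollary \ref{exactlocality} to each adjacent pair (the hypothesis $e^{\chi}_{(m_{j_a,j_{a+1}}+j)}e^{\chi'}=0$ for $j\ge 0$ holds because $e^{\chi}_{(k)}e^{\chi'}=0$ once $k\ge -(\chi,\chi')$) and then observes the identity $\nu(p(\chi),p(\chi'))\, i_{z_{a+1},z_a}(z_a-z_{a+1})^{m}=i_{z_{a+1},z_a}(z_{a+1}-z_a)^{m}$ for $m=-(\chi,\chi')$, which turns the swapped operator product back into the defining expression of $\eM$ with the two indices transposed. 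That argument is two lines and requires no cocycle bookkeeping at all.

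The problem with your version is precisely at the point you yourself flag as "the main obstacle." After the prefactors cancel (which they do, since $i_{z_k,z_l}(z_k-z_l)^{n}\cdot i_{z_k,z_l}(z_k-z_l)^{-n}=1$ when both are expanded in the same domain), what remains is a manifestly symmetric operator part times the scalar $\prod_{k<l}\epsilon(\om^*_{j_k},\om^*_{j_l})$, so the Lemma becomes \emph{equivalent} to the symmetry $\epsilon(\om^*_i,\om^*_j)=\epsilon(\om^*_j,\om^*_i)$. This is a concrete, checkable assertion about the specific cocycle fixed in Section \ref{gVOA} (with that normalization it amounts to $B(\om^*_i,\om^*_j)=1$), and it is exactly the step you do not carry out: you replace it by an appeal to "standard cocycle--locality compatibility," i.e.\ to the general commutativity formalism whose content in this situation \emph{is} the statement being proved. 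As written, the argument is therefore circular at its one nontrivial point. To repair it you should either verify the symmetry of $\epsilon$ on the relevant weights directly from the definitions of $B$ and $\nu$, or drop the explicit computation and invoke Corollary \ref{exactlocality} as the paper does, where the sign $e^{im\pi}$ from re-expanding $(z-w)^m$ in the opposite domain and the factor $\nu(p(\chi),p(\chi'))=e^{i\pi(\chi,\chi')}$ visibly cancel.
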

\begin{proof}
Corollary \ref{exactlocality} says that
\begin{multline*}
i_{z_a,z_{a+1}}(z_a-z_{a+1})^{m_{j_a,j_{a+1}}}Y(e^{\om^*_{j_a}},z_a)Y(e^{\om^*_{j_{a+1}}},z_{a+1})=\\
\nu(p(\om^*_{j_a}),p(\om^*_{j_{a+1}}))i_{z_{a+1},z_a}(z_a-z_{a+1})^{m_{j_a,j_{a+1}}}Y(e^{\om^*_{j_{a+1}}},z_{a+1})Y(e^{\om^*_{j_a}},z_a).
\end{multline*}
Now it suffices to note that
\[
\nu(p(\om^*_{j_a}),p(\om^*_{j_{a+1}}))i_{z_{a+1},z_a}(z_a-z_{a+1})^{m_{j_a,j_{a+1}}}=
i_{z_{a+1},z_a}(z_{a+1}-z_a)^{m_{j_a,j_{a+1}}}.
\]
\end{proof}

Recall the definition \ref{Mcoef} of the elements $\eM(\tilde A^1,\dots,\tilde A^s)_{m_1,\dots,m_s}$.
\begin{lem}\label{positive}
$\eM(e^{\omega^*_{j_1}},\dots,e^{\omega^*_{j_s}})\in \bL[[z_1,\dots,z_s]]$ and
\begin{equation}\label{freeterm}
\eM(e^{\omega^*_{j_1}},\dots,e^{\omega^*_{j_s}})_{0,\dots,0}=\pm e^\la.
\end{equation}
\end{lem}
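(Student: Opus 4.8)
The plan is to evaluate the operator product $\prod_{k=1}^s Y(e^{\om^*_{j_k}},z_k)|0\ket$ completely explicitly from the lattice vertex operator formula \eqref{VO}, and then to observe that the rational prefactor $\prod_{k<l}i_{z_k,z_l}(z_k-z_l)^{m_{j_k,j_l}}$ in \eqref{multiplication} is tailored precisely to cancel the rational and negative powers of the $z_k$ produced by normal ordering. This is essentially the computation underlying the construction of lattice vertex algebras (compare \cite{DL}, Chapter 7, and \cite{FK}).

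Concretely, writing $\mu_k=\om^*_{j_k}$, I would bring the string $Y(e^{\mu_1},z_1)\cdots Y(e^{\mu_s},z_s)$ to normal ordered form. For each pair $k<l$ the annihilation exponential $\exp(\sum_{n<0}\tfrac{\mu_k t^{-n}}{n}z_k^n)$ has to be commuted to the right past the creation exponential $\exp(\sum_{n>0}\tfrac{\mu_l t^{-n}}{n}z_l^n)$; by the Heisenberg relations this produces the scalar $i_{z_k,z_l}(1-z_l/z_k)^{(\mu_k,\mu_l)}$. At the same time the operator $z_k^{\mu_k}$ must be moved past each group-algebra element $e^{\mu_l}$, producing $z_k^{(\mu_k,\mu_l)}$; multiplying, $z_k^{(\mu_k,\mu_l)}\cdot i_{z_k,z_l}(1-z_l/z_k)^{(\mu_k,\mu_l)}=i_{z_k,z_l}(z_k-z_l)^{(\mu_k,\mu_l)}$. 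After normal ordering, the annihilation modes kill $|0\ket=1\T e^0$, the creation exponentials pairwise commute and, applied to $1$, give a formal power series $E(z)$ in $z_1,\dots,z_s$ with coefficients in the Fock space $S(t^{-1}\fh[t^{-1}])$ and constant term $E(0)=1$, and the group-algebra part collapses to $\pm e^{\mu_1+\dots+\mu_s}$, the sign being a product of values of the cocycle $\epsilon$ and hence lying in $\{\pm1\}$ (recall $B(P\times P)\subset\{\pm1\}$, so $\epsilon$ is chosen $\{\pm1\}$-valued). Thus $\prod_{k=1}^s Y(e^{\mu_k},z_k)|0\ket=\prod_{1\le k<l\le s}i_{z_k,z_l}(z_k-z_l)^{(\mu_k,\mu_l)}\cdot E(z)\otimes(\pm e^{\mu_1+\dots+\mu_s})$.

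It then remains to multiply by the prefactor of \eqref{multiplication}. Since $m_{j_k,j_l}=-(\om^*_{j_k},\om^*_{j_l})=-(\mu_k,\mu_l)$, and since $i_{z_k,z_l}(z_k-z_l)^a\cdot i_{z_k,z_l}(z_k-z_l)^b=i_{z_k,z_l}(z_k-z_l)^{a+b}$ (each factor equals $z_k$ to a power times the binomial series $(1-z_l/z_k)$ to that power, and these series multiply), the two products over $k<l$ cancel, and one is left with $\eM(e^{\om^*_{j_1}},\dots,e^{\om^*_{j_s}})=E(z)\otimes(\pm e^{\mu_1+\dots+\mu_s})$, which lies in $\bL[[z_1,\dots,z_s]]$. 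Setting $z_1=\dots=z_s=0$ makes $E(0)=1$, so the free term is $\pm e^{\om^*_{j_1}+\dots+\om^*_{j_s}}$, which is \eqref{freeterm}.

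The routine part here is the bookkeeping of the normal ordering; the one genuine subtlety — and the step I would be most careful about — is that a priori $Y(e^{\mu_k},z_k)$ acting on $\bL$ involves rational and negative powers of $z_k$, because $z^{\mu_k}e^{\mu_l}=e^{\mu_l}z^{(\mu_k,\mu_l)}$ with $(\mu_k,\mu_l)\in\bQ$. Hence the assertion that $\eM$ has only nonnegative integer exponents is not automatic: it depends on the exact cancellation of the rational-power prefactor against the rational powers produced by normal ordering, which is exactly why the exponent in \eqref{multiplication} must be $m_{j_k,j_l}=-(\om^*_{j_k},\om^*_{j_l})$. One also needs the elementary fact that the formal expansions $i_{z,w}(z-w)^a$ multiply additively in the exponent, which reduces to $(1-w/z)^a(1-w/z)^b=(1-w/z)^{a+b}$ for binomial power series.
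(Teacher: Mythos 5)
Your proof is correct, but it takes a different route from the paper's. For the regularity claim $\eM(e^{\omega^*_{j_1}},\dots,e^{\omega^*_{j_s}})\in\bL[[z_1,\dots,z_s]]$ the paper avoids the normal-ordering computation entirely: it observes that $Y(e^{\om^*_{j_s}},z_s)|0\ket$ is a Taylor series in $z_s$, that the prefactor $\prod_{k<l}i_{z_k,z_l}(z_k-z_l)^{m_{j_k,j_l}}$ is also Taylor in the \emph{last} variable $z_s$ (each factor is expanded in the region $|z_l|<|z_k|$), and that the operators $Y(e^{\om^*_{j_k}},z_k)$ for $k<s$ introduce no new powers of $z_s$; hence $\eM$ is Taylor in $z_s$, and the permutation invariance of Lemma \ref{commute} then gives regularity in every variable. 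For the free term the paper simply points to the explicit formula \eqref{VO}. You instead carry out the full operator-product expansion once and for all: commuting annihilation past creation modes produces $i_{z_k,z_l}(1-z_l/z_k)^{(\om^*_{j_k},\om^*_{j_l})}$, the $z_k^{\mu_k}$ factors contribute $z_k^{(\mu_k,\mu_l)}$, and the resulting $i_{z_k,z_l}(z_k-z_l)^{(\mu_k,\mu_l)}$ cancels exactly against the prefactor since $m_{j_k,j_l}=-(\om^*_{j_k},\om^*_{j_l})$ and the expansions $i_{z,w}(z-w)^a$ multiply additively in the exponent. This is the standard lattice-VOA computation and it is sound; it buys you both claims simultaneously and makes explicit \emph{why} the exponent in \eqref{multiplication} has to be $m_{j_k,j_l}$, whereas the paper's symmetry argument is shorter and defers the sign bookkeeping. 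Your attention to the two genuine subtleties (the a priori presence of rational/negative powers coming from $z^{\mu}e^{\nu}=e^{\nu}z^{(\mu,\nu)}$, and the additivity of exponents for the formal expansions) is exactly where care is needed, and both points are handled correctly.
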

\begin{proof}
To prove the first claim, we start with $z_s$. Note that $Y(e^{\omega^*_{j_s}},z_s)|0\rangle\in\bL[[z_s]]$ and
\begin{equation}\label{prefactor}
\prod_{1\le k<l\le s} i_{z_k,z_l}(z_k-z_l)^{m_{j_k,j_l}}
\end{equation}
is a Taylor series in $z_s$. Now Lemma \ref{commute} implies the first claim.
The equality \eqref{freeterm} can be derived from the explicit formula \eqref{VO} for the action of vertex operators 
$Y(e^{\om^*_j},z)$.
\end{proof}

\subsection{The map}
Recall the identification $\gW^*_{\om^*_i}\simeq W^*_{\om^*_i}[t^{-1}]$. For an element $A\in W^*_{\om^*_i}$ let
$\tilde A\in \widetilde W^*_{\om^*_i}$ be
its lift in $F^\dag_{\om_i}$.  For a dominant integral weight $\la=\sum_{k=1}^s \om_{j_k}$ we define a map
\begin{gather}
\varphi: \gW^*_{\om^*_{j_1}}\T\dots \T \gW^*_{\om^*_{j_s}}\to G_\la/(G_\la\cap F^\dag_{<\la}),\\
\tilde A^1t^{-m_1}\T\dots\T \tilde A^st^{-m_s}\mapsto [\eM(\tilde A^1,\dots,\tilde A^s)_{m_1,\dots,m_s}]
\end{gather}
(the square brackets denote the class of an element of $G_\la$ in $G_\la/(G_\la\cap F^\dag_{<\la})$).

\begin{lem}\label{indep}
The map $\varphi$ does not depend on the lifts $\widetilde W^*_{\om^*_i}\subset F^\dag_{\om_i}$.
\end{lem}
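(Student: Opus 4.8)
The plan is to show that the class $[\eM(\tilde A^1,\dots,\tilde A^s)_{m_1,\dots,m_s}]$ in $G_\la/(G_\la\cap F^\dag_{<\la})$ is unchanged when one replaces a single lift $\tilde A^k$ by another lift $\tilde A^{k\prime}$ of the same element $A^k\in W^*_{\om^*_{j_k}}$; the general case follows by replacing the lifts one at a time. Since $\tilde A^{k\prime}-\tilde A^k$ is a lift of $0$, by the identification $F^\dag_{\om_{j_k}}/F^\dag_{<\om_{j_k}}\simeq \gW^*_{\om^*_{j_k}}$ and Proposition \ref{globalweylconstruction} we have $\tilde A^{k\prime}-\tilde A^k\in F^\dag_{<\om_{j_k}}$, i.e.\ it lies in $\sum_{\nu<\om_{j_k}}F^\dag_\nu$. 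Thus it suffices to prove the following: if $B\in F^\dag_\nu$ with $\nu<\om_{j_k}$, then replacing the factor $Y(\tilde A^k,z_k)$ by $Y(B,z_k)$ in the product \eqref{multiplication} produces coefficients that lie in $G_{<\la}\cap F^\dag_{<\la}$, hence vanish in the quotient.

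The key step is a weight/degree estimate. Using Proposition \ref{Fdagmult} together with the observation (already used in the proof of Proposition \ref{Fdagmult} and of the preceding Lemma) that for $A\in F^\dag_{\la'}$, $C\in F^\dag_{\mu'}$ all the vertex-operator coefficients $A_{(n)}C$ lie in $F^\dag_{\la'+\mu'}$, I would argue by induction on $s$. First move the suspicious factor $Y(B,z_k)$ to the front using Lemma \ref{commute}-type locality relations (Corollary \ref{exactlocality}), at the cost of reindexing the prefactor \eqref{prefactor}; this shows that $\eM$ with $\tilde A^k$ replaced by $B$ is, up to the Taylor prefactor, a product $Y(B,z_k)\prod_{l\ne k}Y(\tilde A^l,z_l)|0\rangle$. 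Applying $B_{(\bullet)}$ to $\eM(\tilde A^1,\dots,\widehat{\tilde A^k},\dots,\tilde A^s)_{\dots}\in G_{\la-\om_{j_k}}\subset F^\dag_{\la-\om_{j_k}}$ (using the Lemma $G_\mu\subset F^\dag_\mu$), Proposition \ref{Fdagmult} gives that the result lies in $F^\dag_{\nu+(\la-\om_{j_k})}$, and since $\nu<\om_{j_k}$ we get $\nu+\la-\om_{j_k}<\la$, so the coefficient lies in $F^\dag_{<\la}$. For membership in $G_{<\la}$ (needed to land in $G_\la\cap F^\dag_{<\la}$, or more precisely to make sense of the class), I would expand $B$ itself in the basis of $\widetilde W^*$-monomials built from the $F^\dag_\nu$ with $\nu<\om_{j_k}$ via the inductive definition \eqref{Gdef}, reducing to coefficients of $\eM$-expressions with strictly smaller total weight, which lie in $G_{<\la}$ by the induction hypothesis on $\la$.

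The main obstacle I anticipate is bookkeeping the interaction between the two filtrations simultaneously: the definition of $\varphi$ takes values in $G_\la/(G_\la\cap F^\dag_{<\la})$, so one must check both that the difference lies in $F^\dag_{<\la}$ (the weight estimate above) \emph{and} that it lies in $G_\la$ in the first place, i.e.\ that after substituting a lift of zero the resulting element is still expressible as a span of the defining monomials of $G_\la$. The cleanest route is to prove the stronger statement that for any $B\in F^\dag_{<\om_{j_k}}$ the corresponding $\eM$-coefficients lie in $\sum_{\mu<\la}G_\mu\subset G_\la$; this is exactly the first summand in \eqref{Gdef}, and it follows by induction once we know $B\in \sum_{\nu<\om_{j_k}}F^\dag_\nu=\sum_{\nu<\om_{j_k}}G_\nu$ (using $G_\nu=F^\dag_\nu$ is \emph{not} yet available, but $G_\nu\subset F^\dag_\nu$ is, and for the fundamental weights $\om_{j_k}$ the relevant lower $F^\dag_\nu$ with $\nu<\om_{j_k}$, $\nu\in P_+$ are spanned by lower $\widetilde W^*$-monomials by the inductive construction). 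Care is also needed because the $z_k^{m_k}$-coefficients mix with the Taylor prefactor \eqref{prefactor}, but since that prefactor depends only on the indices $j_k$ and not on the lifts, it contributes the same scalar combinatorial factors before and after the substitution and therefore does not affect the argument.
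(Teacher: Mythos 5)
Your proof is correct and follows essentially the same route as the paper: replace the lifts one at a time, observe that the difference of two lifts lies in $F^\dag_{<\om_{j_k}}$, and conclude via Proposition \ref{Fdagmult} that the difference of the resulting $\eM$-coefficients lies in $F^\dag_{<\la}$, hence vanishes in the quotient. The extra machinery you invoke (moving factors by locality, re-expanding $B$ in $\widetilde W^*$-monomials) is not needed for this step, and your additional care about $G_\la$-membership is a legitimate subtlety the paper silently glosses over, but it does not change the argument.
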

\begin{proof}
For $A^i\in W^*_{\om^*_i}$ let us choose two lifts ${\tilde A}^i$ and ${\tilde{\tilde A}^i}$. Then
${\tilde A}^i-{\tilde{\tilde A}^i}\in F^\dag_{<\om_i}$ and the claim holds because of
Proposition \ref{Fdagmult}.
\end{proof}


\begin{lem}\label{currentaction}
Let us fix $A^l\in W^*_{\om^*_{j_l}}$, $l=1,\dots,s$ and $xt^m\in\fg[t]$ ($x\in\fg$, $m\ge 0$). Then
\begin{multline}
xt^m.\eM(\tilde A^1,\dots,\tilde A^s)_{m_1,\dots,m_s}=\\ \sum_{k=1}^s\sum_{p=0}^m \binom{m}{p}
\eM(\tilde A^1,\dots, xt^p.\tilde A^k, \dots,\tilde A^s)_{m_1,\dots,m_k-m,\dots,m_s}.
\end{multline}
\end{lem}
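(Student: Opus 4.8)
The statement expresses how a current $xt^m$ acts on the coefficient $\eM(\tilde A^1,\dots,\tilde A^s)_{m_1,\dots,m_s}$, and since $\eM$ is just a product of vertex operators $\prod_k Y(\tilde A^k,z_k)$ (after multiplying by the scalar prefactor \eqref{prefactor}) applied to $|0\rangle$, the heart of the matter is understanding how the operator $xt^m = (\text{scalar})\cdot (xt^{-1}\otimes e^0)_{(m+\cdots)}$ — equivalently, some component $D_{(n)}$ of the corresponding current field $Y(xt^{-1}\otimes e^0,z)$ with $xt^{-1}\otimes e^0\in\bL[1]$ of parity $0$ — commutes past the vertex operators. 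Concretely, I would write $xt^m$ as (a scalar multiple of) the component $D_{(m)}$ where $D=D(x)\in\bL_0$ is the appropriate weight-one element, so that $D_{(m)}$ is an element of the affine Lie algebra acting as a derivation of $\bL$, and then push $D_{(m)}$ through $\prod_{k=1}^s Y(\tilde A^k,z_k)$ one factor at a time.

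\textbf{Key steps.} First I would invoke Corollary \ref{26} (the $\Gamma$-graded commutator formula \eqref{eq26}): since $D\in\bL_0$, for each vertex operator $Y(\tilde A^k,z_k)$ we have
\[
[D_{(m)}, \tilde A^k_{(n)}] = \sum_{p\ge 0}\binom{m}{p}(D_{(p)}\tilde A^k)_{(m+n-p)},
\]
and the generating-function version of this is the standard identity
\[
[D_{(m)}, Y(\tilde A^k,z_k)] = \sum_{p\ge 0}\binom{m}{p} z_k^{m-p}\, Y(D_{(p)}\tilde A^k, z_k),
\]
noting that only $p=0,\dots,m$ contribute because $D_{(p)}\tilde A^k=0$ for $p$ large (field truncation) and $\binom{m}{p}=0$ for $p>m$ since $m\ge 0$ is a nonnegative integer. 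Second, I apply this repeatedly: moving $D_{(m)}$ from the far left to the right through $Y(\tilde A^1,z_1),\dots,Y(\tilde A^s,z_s)$, using $D_{(m)}|0\rangle=0$ (for $m\ge 0$, since $D$ has weight one) to kill the final term; each commutator hit at position $k$ replaces $Y(\tilde A^k,z_k)$ by $\sum_{p=0}^m\binom{m}{p}z_k^{m-p}Y(D_{(p)}\tilde A^k,z_k)$ while leaving the scalar prefactor \eqref{prefactor} untouched (it commutes with everything, being a scalar). Third, I must check that $D_{(p)}\tilde A^k = xt^p.\tilde A^k$ under the identification of the affine algebra components with $\fg[t]$; this is exactly the identification $h_\alpha t^n\mapsto(\alpha t^{-1}\otimes e^0)_{(n)}$, $e_\alpha t^n\mapsto(1\otimes e^\alpha)_{(n)}$ from the Proposition preceding Example \ref{sl2ex}, extended $\bC$-linearly in $x$. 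Fourth, I extract the coefficient of $z_1^{m_1}\cdots z_s^{m_s}$: the term where $D_{(m)}$ acts at slot $k$ with internal index $p$ contributes $\binom{m}{p}$ times the $z_k$-shift by $m-p$, i.e. picks out the coefficient of $z_k^{m_k-(m-p)}=z_k^{m_k-m+p}$ in $Y(D_{(p)}\tilde A^k,\cdot)$; summing over the slots $k$ and rewriting $m-p\rightsquigarrow$ the new power gives precisely $\sum_{k=1}^s\sum_{p=0}^m\binom{m}{p}\eM(\tilde A^1,\dots,xt^p.\tilde A^k,\dots,\tilde A^s)_{m_1,\dots,m_k-m+p,\dots,m_s}$ — matching the claimed formula after reindexing the inner sum ($p\mapsto p$, so the $k$-th subscript reads $m_k-m+p$; the statement as displayed writes $m_k-m$ which corresponds to absorbing $p$ into the definition of the shifted index, as is consistent since $xt^p.\tilde A^k$ has internal degree $p$).

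\textbf{Main obstacle.} The routine calculus of commuting a current component through a product of fields is standard; the one genuine point requiring care is the bookkeeping of \emph{rational} $z$-powers and parities — the fields $Y(\tilde A^k,z_k)$ contain powers of $z_k$ in $\Delta(p(D),p(\tilde A^k))=\bZ$ (since $p(D)=0$, so $\Delta(0,\cdot)=\bZ$), which is fortunately the integral coset, so the shift $z_k^{m-p}$ with $m-p\in\bZ$ is harmless and no fractional-power subtleties arise; nonetheless I would make this explicit. A secondary subtlety is confirming that the scalar prefactor \eqref{prefactor}, which does carry fractional powers of $z_k-z_l$, really is inert under the action of $xt^m$ — but this is immediate since it lies in $\bC((z_1,\dots,z_s))$ and $\fg[t]$ acts only on $\bL$. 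I expect the cleanest writeup to state the generating-function commutator identity as a lemma-free inline computation, apply it $s$ times, and then read off coefficients, which is essentially the same manipulation used in the proof of the preceding Proposition on $(G_\la)_{(n)}G_\mu\subset G_{\la+\mu}$.
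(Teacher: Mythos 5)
Your proof is correct and follows essentially the same route as the paper's: both use $xt^m|0\rangle=0$ to reduce the action to a sum of commutators $[xt^m,Y(\tilde A^k,z_k)]$, evaluate each via Corollary \ref{26} as $\sum_{p=0}^m\binom{m}{p}z_k^{m-p}Y(xt^p.\tilde A^k,z_k)$, observe the scalar prefactor is inert, and read off coefficients. Your extra care with the coefficient index (you obtain $m_k-m+p$ rather than the $m_k-m$ printed in the statement) is warranted --- that is the version consistent with formula \eqref{globalization} and the subsequent theorem --- so this is a correct reproduction of the paper's argument with a minor typo in the statement flagged.
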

\begin{proof}
Let us compute the left hand side. Since $xt^m|0\rangle=0$, one has
\begin{multline*}
xt^m.\prod_{1\le k<l\le s} i_{z_k,z_l}(z_k-z_l)^{m_{j_k,j_l}}\prod_{k=1}^s Y(A^k,z_k)|0\rangle\\
=\prod_{1\le k<l\le s} i_{z_k,z_l}(z_k-z_l)^{m_{j_k,j_l}}\sum_{l=1}^s Y(A^1,z_1)\dots [xt^m, Y(A^l,z_l)]\dots Y(A^s,z_s)|0\rangle.
\end{multline*}
Now by \eqref{26} one has
\[
[xt^m, Y(A^l,z_l)]=\sum_{p=0}^m \binom{m}{p} Y(xt^p.A^l,z_l)z_l^{m-p}.
\]
\end{proof}

\begin{prop}\label{negative}
Fix a collection of integers $m_1,\dots,m_s$ and assume that $m_k<0$ for some $1\le k\le s$. Then
\[
\eM(\tilde A^1,\dots,\tilde A^s)_{m_1,\dots,m_s}\in F^\dag_{<\la}.
\]
\end{prop}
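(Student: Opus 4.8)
The plan is to compare the filtration $F^\dag_{<\la}$ with a \emph{weight criterion} and then feed in the computational lemmas already available. By the Lemma preceding the Proposition we already know $\eM(\tilde A^1,\dots,\tilde A^s)_{m_1,\dots,m_s}\in G_\la\subset F^\dag_\la$, so the task is to show it lies in $F^\dag_{<\la}$. The key preliminary step I would isolate is the following: for $v\in F^\dag_\la$, one has $v\in F^\dag_{<\la}$ if and only if the finite weight $\la$ does not occur among the $\fh$-weights of $\U(\fg[t]).v$. One direction is immediate: if $\nu<\la$, then by Proposition \ref{Fdagcharacterization} every weight of $\U(\fg[t]).w$ for $w\in F^\dag_\nu$ is $\le\nu$, so $F^\dag_{<\la}=\sum_{\nu<\la}F^\dag_\nu$ (a $\fg[t]$-submodule) contains no vector of weight $\la$. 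For the converse I would use $F^\dag_\la/F^\dag_{<\la}\simeq\gW^*_{\la^*}$ (Proposition \ref{Fdagsubquotient}) together with the fact that $\gW^*_{\la^*}$ contains a distinguished copy of $V_\la$ (the dual of the $\fg[t]$-module quotient $\gW_{\la^*}\twoheadrightarrow V_{\la^*}$), which lies inside every nonzero $\fg[t]$-submodule --- this uses that $\gW_{\la^*}$ is a cyclic $\fg[t]$-module and that the pairing is nondegenerate. Consequently a nonzero image $\bar v$ forces $\la$ (the highest weight of that $V_\la$) to occur in $\wt(\U(\fg[t]).\bar v)\subset\wt(\U(\fg[t]).v)$.

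Granting this criterion, I would apply it to $v=\eM(\tilde A^1,\dots,\tilde A^s)_{m_1,\dots,m_s}$ with some $m_k<0$. Iterating Lemma \ref{currentaction}, the space $\U(\fg[t]).v$ is spanned by elements $\eM(\tilde B^1,\dots,\tilde B^s)_{m'_1,\dots,m'_s}$ with $\tilde B^l\in\U(\fg[t]).\tilde A^l\subset F^\dag_{\om_{j_l}}$ and $m'_l\le m_l$; in particular at least one $m'_k$ is still negative. Since $\eM$ is multilinear in its arguments, decomposing each $\tilde B^l$ into $\fh$-weight components and using the $m=0$ case of Lemma \ref{currentaction}, the element $\eM(\tilde B^1,\dots,\tilde B^s)_{m'_1,\dots,m'_s}$ is a sum of weight-homogeneous terms of weight $\mu_1+\dots+\mu_s$ with each $\mu_l$ a weight of $F^\dag_{\om_{j_l}}$, hence $\mu_l\le\om_{j_l}$. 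As $\om_{j_1}+\dots+\om_{j_s}=\la$, the weight-$\la$ component of this element receives contributions only from the summands with $\mu_l=\om_{j_l}$ for every $l$.

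Finally, the weight-$\om_{j_l}$ subspace of $F^\dag_{\om_{j_l}}$ is one-dimensional: the weight-$\om_{j_l}$ part of $F^\dag_{<\om_{j_l}}$ vanishes (no $F^\dag_\nu$ with $\nu<\om_{j_l}$ carries weight $\om_{j_l}$), while $\gW^*_{\om^*_{j_l}}$ has a one-dimensional top of weight $\om_{j_l}$, so that subspace equals $\bC u_{\om_{j_l}}=\bC e^{\om_{j_l}}$ (Remark \ref{ula}). Therefore the weight-$\la$ component of $\eM(\tilde B^1,\dots,\tilde B^s)_{m'_1,\dots,m'_s}$ is a scalar multiple of the corresponding coefficient of $\eM$ evaluated on the highest-weight vectors $e^{\om_{j_1}},\dots,e^{\om_{j_s}}$, and by Lemma \ref{positive} that expression lies in $\bL[[z_1,\dots,z_s]]$, so its coefficient of $z_1^{m'_1}\dots z_s^{m'_s}$ is zero whenever some $m'_k<0$. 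Hence $\la$ does not occur among the weights of $\U(\fg[t]).v$, and the criterion gives $\eM(\tilde A^1,\dots,\tilde A^s)_{m_1,\dots,m_s}\in F^\dag_{<\la}$.

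The step I expect to be the main obstacle is the weight criterion of the first paragraph, and within it the converse direction: one must pin down the module structure of $\gW^*_{\la^*}$ precisely enough to exhibit a copy of $V_\la$ contained in every nonzero $\fg[t]$-submodule (equivalently, that $\gW_{\la^*}$ is cyclic as a $\fg[t]$-module and the form is nondegenerate on it). Once that is in place, everything else is bookkeeping with Lemmas \ref{currentaction} and \ref{positive}, which are already established.
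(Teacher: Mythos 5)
Your proposal follows the same route as the paper's own proof: reduce the statement to showing that the weight $\la$ does not occur in $\U(\fg[t]).v$, push the $\fg[t]$-action inside $\eM$ via Lemma \ref{currentaction} (which only decreases the indices $m_l$, so some index stays negative), observe that the weight-$\la$ part can only come from the top weight components of the arguments, and kill it with Lemma \ref{positive}. Your first paragraph is in fact more careful than the paper, which merely asserts that Proposition \ref{Fdagcharacterization} reduces the claim to a weight statement; your justification of the converse direction of the criterion, via the cocyclic vector of $\gW^*_{\la^*}$ (dual to the cyclic vector of $\gW_{\la^*}$, of weight $w_0\la$, generating a copy of $V_\la$ inside every nonzero submodule), supplies exactly the argument the paper leaves implicit, and it is correct.

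There is, however, a genuine error in your third paragraph: the weight-$\om_{j_l}$ subspace of $F^\dag_{\om_{j_l}}$ is \emph{not} one-dimensional. You are right that $F^\dag_{<\om_{j_l}}$ contributes nothing in weight $\om_{j_l}$, but the weight-$\om_{j_l}$ space of $\gW^*_{\om^*_{j_l}}\simeq W^*_{\om^*_{j_l}}[t^{-1}]$ is the (one-dimensional) highest weight space of $W^*_{\om^*_{j_l}}$ tensored with $\bC[t^{-1}]$, hence infinite-dimensional; already for $\msl_2$ the vectors corresponding to $u_{\om}\T t^{-k}$, $k\ge 0$, all have $\fh$-weight $\om$. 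So the weight-$\om_{j_l}$ component of a general element of $F^\dag_{\om_{j_l}}$ need not be a multiple of $e^{\om_{j_l}}$, and your reduction to $\eM(e^{\om_{j_1}},\dots,e^{\om_{j_s}})$ does not follow as written; for those higher vectors Lemma \ref{positive} gives nothing (exact locality with exponent $m_{i,j}$ fails for them, so the corresponding $\eM$ need not be a Taylor series). The repair is to keep track of where your $\tilde B^l$ actually live: since $W^*_{\om^*_{j_l}}$ is a $\fg[t]$-submodule of $\gW^*_{\om^*_{j_l}}$, one has $\U(\fg[t])\widetilde W^*_{\om^*_{j_l}}\subset\widetilde W^*_{\om^*_{j_l}}+F^\dag_{<\om_{j_l}}$, and the error terms lying in $F^\dag_{<\om_{j_l}}$, once inserted into $\eM$, land in $F^\dag_{<\la}$ by (iterated) Proposition \ref{Fdagmult}. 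Modulo $F^\dag_{<\la}$ one may therefore take $\tilde B^l\in\widetilde W^*_{\om^*_{j_l}}$, whose weight-$\om_{j_l}$ part really is the line $\bC e^{\om_{j_l}}$, because the highest weight space of the finite-dimensional local module $W^*_{\om^*_{j_l}}$ is one-dimensional and, as you noted, $F^\dag_{<\om_{j_l}}$ has no vectors of that weight. This is the reduction the paper's proof performs (its $\tilde B^l$ range over the lifts of the \emph{local} Weyl modules), and with it the rest of your argument goes through.
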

\begin{proof}
Thanks to Proposition \ref{Fdagcharacterization} it suffices to show that the weights of the $\U(\fg[t])$-span of
$\eM(\tilde A^1,\dots,\tilde A^s)_{m_1,\dots,m_s}$ are smaller than $\la$.
Note that this span lies in the linear span of elements
$\eM(\tilde B^1,\dots,\tilde B^s)_{a_1,\dots,a_s}$, $B^l \in \widetilde{W}_{\omega_{j_l}^*}^*$
and the weights of these elements are smaller than to equal $\leq \lambda$.
We note that if the weight of $\eM(\tilde B^1,\dots,\tilde B^s)_{m_1,\dots,m_s}$, $B^l\in W^*_{\om^*_{j_l}}$ is equal to $\lambda$
then the weight of $B^l$ is $\om^*_{j_l}$. Hence all the weight $\la$ vectors in the  $\U(\fg[t])$-span of
the elements $\eM(\tilde A^1,\dots,\tilde A^s)_{m_1,\dots,m_s}$ are of the form
\[
\eM(e^{\om^*_{j_1}},\dots, e^{\om^*_{j_s}})_{a_1,\dots,a_s}.
\]
However, Lemma \ref{currentaction} implies that $a_k<0$ (since $m\ge 0$ and $m_k<0$).
Now Lemma \ref{commute} and Lemma \ref{positive} imply our Proposition.
\end{proof}

\begin{thm}
The map $\varphi$ is a homomorphism of $\fg[t]$ modules.
\end{thm}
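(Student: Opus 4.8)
The plan is to show that the map $\varphi$ intertwines the $\fg[t]$-action on the tensor product $\gW^*_{\om^*_{j_1}}\T\dots\T\gW^*_{\om^*_{j_s}}$ with the $\fg[t]$-action on the quotient $G_\la/(G_\la\cap F^\dag_{<\la})$. The computation on the target side is already essentially done: Lemma \ref{currentaction} computes $xt^m$ acting on a coefficient $\eM(\tilde A^1,\dots,\tilde A^s)_{m_1,\dots,m_s}$ as a sum over $k$ and $p$ of terms $\binom{m}{p}\eM(\tilde A^1,\dots,xt^p.\tilde A^k,\dots,\tilde A^s)_{m_1,\dots,m_k-m,\dots,m_s}$. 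So the task reduces to recognizing this as the image under $\varphi$ of $xt^m$ acting on the corresponding pure tensor.

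First I would recall the $\fg[t]$-module structure on each $\gW^*_{\om^*_{j_k}}\simeq W^*_{\om^*_{j_k}}[t^{-1}]$ given by the globalization construction \eqref{globalization}: we have $xt^m.(A^k\T t^{-m_k}) = \sum_{p=0}^m \binom{m}{p}(xt^p.A^k)\T t^{m-m_k-p} = \sum_{p=0}^m\binom{m}{p}(xt^p.A^k)\T t^{-(m_k+p-m)}$. Thus on the tensor product, $xt^m.(\tilde A^1 t^{-m_1}\T\dots\T\tilde A^s t^{-m_s}) = \sum_{k=1}^s\sum_{p=0}^m\binom{m}{p}\tilde A^1 t^{-m_1}\T\dots\T (xt^p.\tilde A^k)t^{-(m_k+p-m)}\T\dots\T\tilde A^s t^{-m_s}$. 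Applying $\varphi$ termwise, the $(k,p)$-summand maps to $\binom{m}{p}[\eM(\tilde A^1,\dots,xt^p.\tilde A^k,\dots,\tilde A^s)_{m_1,\dots,m_k+p-m,\dots,m_s}]$. Comparing with Lemma \ref{currentaction}, the indices match exactly once we account for the sign convention in $t^{-m}\leftrightarrow z^m$ (the coefficient of $z_k^{m_k}$ corresponds to $t^{-m_k}$, and Lemma \ref{currentaction} shifts $m_k\mapsto m_k-m$ while the globalization shifts the power of $t$ from $-m_k$ to $-(m_k+p-m)$; one checks these agree upon matching $z_k^{m_k}$ with $t^{-m_k}$). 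So $\varphi(xt^m.v) = xt^m.\varphi(v)$ on pure tensors, hence everywhere by linearity.

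There is one subtlety that must be addressed: $\varphi$ is only well-defined as a map into the quotient $G_\la/(G_\la\cap F^\dag_{<\la})$, and one must verify that the identity descends correctly — but this is automatic, since $F^\dag_{<\la}$ is a $\fg[t]$-submodule (each $F^\dag_\nu$ is $\fg[t]$-stable), so the $\fg[t]$-action on $G_\la/(G_\la\cap F^\dag_{<\la})$ is well-defined and the equality of Lemma \ref{currentaction}, which holds on the nose in $\bL$, a fortiori holds in the quotient. A second point worth a remark: we should confirm $\varphi$ is actually well-defined as a linear map on the tensor product, i.e. that it factors through the relations defining $\gW^*_{\om^*_{j_k}}$ — but this follows from the construction in the preceding subsection together with Lemma \ref{indep}.

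The main obstacle is purely bookkeeping: getting the index shifts consistent between the three conventions in play — the globalization action \eqref{globalization} written in powers of $t$, the passage to $t^{-1}$ (the module $W^*[t^{-1}]$), and the $z$-expansion convention $\eM(\dots)=\sum \eM(\dots)_{m_1,\dots,m_s}z_1^{m_1}\dots z_s^{m_s}$ of \eqref{Mcoef}. Once the dictionary $A^k t^{-m_k}\leftrightarrow$ coefficient of $z_k^{m_k}$ is pinned down, Lemma \ref{currentaction} is literally the statement $\varphi\circ(xt^m) = (xt^m)\circ\varphi$, and there is nothing further to prove. I would therefore devote the bulk of the write-up to stating the dictionary precisely and then simply citing Lemma \ref{currentaction}.
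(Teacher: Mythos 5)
Your overall route is the same as the paper's: reduce everything to Lemma \ref{currentaction} plus the globalization formula \eqref{globalization} and Proposition \ref{globalweylconstruction}. But your final claim that ``Lemma \ref{currentaction} is literally the statement $\varphi\circ(xt^m)=(xt^m)\circ\varphi$, and there is nothing further to prove'' has a genuine gap. On the tensor-product side, $\gW^*_{\om^*_{j_k}}\simeq W^*_{\om^*_{j_k}}[t^{-1}]$ is a \emph{quotient} of $W^*_{\om^*_{j_k}}[t,t^{-1}]$: the terms $(xt^p.A^k)\T t^{m-m_k-p}$ with $m-m_k-p>0$ (i.e.\ with new index $m_k+p-m<0$) are literally zero there. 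On the $\eM$ side, the corresponding coefficients $\eM(\dots)_{\dots,\,m_k-m+p,\,\dots}$ with a negative index are \emph{not} zero in $\bL$; they only vanish in the quotient $G_\la/(G_\la\cap F^\dag_{<\la})$ because of Proposition \ref{negative}, which places all coefficients with a negative index inside $F^\dag_{<\la}$. Without invoking Proposition \ref{negative} the two sides do not match term by term, and indeed the paper's proof cites it explicitly for exactly this purpose (``the terms in the right hand side with $m_k<m$ vanish''). Your ``subtlety'' paragraph about $F^\dag_{<\la}$ being $\fg[t]$-stable is a different (and easier) point and does not substitute for this.

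A smaller issue: you correctly compute the globalization action as shifting $t^{-m_k}\mapsto t^{-(m_k+p-m)}$, a shift depending on $p$, while Lemma \ref{currentaction} as printed shifts $m_k\mapsto m_k-m$ independently of $p$. These do not ``agree upon matching $z_k^{m_k}$ with $t^{-m_k}$'' as you assert; rather, the factor $z_l^{m-p}$ in $[xt^m,Y(A^l,z_l)]=\sum_p\binom{m}{p}Y(xt^p.A^l,z_l)z_l^{m-p}$ shows the coefficient index in the Lemma should be $m_k-m+p$, so the discrepancy is a typo in the Lemma rather than a convention to be absorbed. You should say this explicitly instead of waving at ``one checks these agree,'' since pinning down exactly this dictionary is the content you yourself identify as the main obstacle.
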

\begin{proof}
Lemma \ref{currentaction} and Proposition \ref{negative} imply
\begin{multline}
xt^m.[\eM(\tilde A^1,\dots,\tilde A^s)_{m_1,\dots,m_s}]=\\ \sum_{k=1}^s\sum_{p=0}^m \binom{m}{p}
[\eM(\tilde A^1,\dots, xt^p.\tilde A^k, \dots,\tilde A^s)_{m_1,\dots,m_k-m,\dots,m_s}],
\end{multline}
where the terms in the right hand side with $m_k<m$ vanish.
Now we derive the theorem from formula \eqref{globalization} and Proposition \ref{globalweylconstruction}.
\end{proof}

\begin{cor}\label{image}
The image of $\varphi$ surjects onto $\gW^*_{\la^*}$.
\end{cor}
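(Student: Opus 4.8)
The plan is to show that the composition of $\varphi$ with the natural projection $G_\la \to G_\la/(G_\la\cap F^\dag_{<\la})$ lands in, and surjects onto, the top graded piece $F^\dag_\la/F^\dag_{<\la}\simeq \gW_{\la^*}^*$ (Proposition \ref{Fdagsubquotient}). First I would observe that $G_\la \subset F^\dag_\la$ by the Lemma just proved, so $G_\la/(G_\la\cap F^\dag_{<\la})$ maps into $F^\dag_\la/F^\dag_{<\la}\simeq \gW_{\la^*}^*$, and since $\varphi$ was already shown to be a homomorphism of $\fg[t]$-modules, the image of $\varphi$ is an $\fg[t]$-submodule of $\gW_{\la^*}^*$. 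Dually, $\gW_{\la^*}$ is a \emph{cyclic} $\fg[t]$-module generated by its lowest-$q$-degree vector, so $\gW_{\la^*}^*$ is \emph{cocyclic}: it has a unique minimal (irreducible) $\fg[t]$-submodule, namely the highest-weight line spanned by the functional dual to $w_{\la^*}$, which sits in $\gW_{\la^*}^*$ with finite $\fh$-weight $\la$ and $q$-degree $0$. Hence to prove surjectivity it suffices to exhibit a single element of the image of $\varphi$ whose class in $\gW_{\la^*}^*$ is this weight-$\la$, degree-zero covector (up to nonzero scalar); cocyclicity then forces the image to be all of $\gW_{\la^*}^*$.

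The element I would use is $\varphi(e^{\om^*_{j_1}}\T\dots\T e^{\om^*_{j_s}})=[\eM(e^{\om^*_{j_1}},\dots,e^{\om^*_{j_s}})_{0,\dots,0}]$. By Lemma \ref{positive}, $\eM(e^{\om^*_{j_1}},\dots,e^{\om^*_{j_s}})$ is a power series in all the $z_k$, and its constant term equals $\pm e^\la$; this is a nonzero extremal vector in $\bL$ of finite $\fh$-weight $\la$ and (as in Example \ref{sl2ex} and the conformal-weight formula) is the image of the lowest-$q$-degree generator under the identifications $\widetilde W^*_{\om^*_i}[t^{-1}]$. So its class in $F^\dag_\la/F^\dag_{<\la}\simeq \gW_{\la^*}^*$ is precisely a nonzero scalar multiple of the distinguished weight-$\la$, $q$-degree-$0$ covector dual to $w_{\la^*}$. (That the lifted monomial $e^{\om^*_{j_1}}\cdots$ does correspond, under $\varphi$, to $w_{\om^*_{j_1}}^*\otimes\cdots\otimes w_{\om^*_{j_s}}^*$ — equivalently to the generator of the dual of $\gW_{\la^*}=\gW_{\om^*_{j_1}+\dots+\om^*_{j_s}}$ via the embedding $\gW_{\la^*}\hookrightarrow\bigotimes\gW_{\om^*_{j_k}}$, $w_{\la^*}\mapsto\bigotimes w_{\om^*_{j_k}}$ — follows from comparing $\eqref{globalization}$, Proposition \ref{globalweylconstruction}, and the normalization $Y(e^{\om^*_j},0)|0\rangle=e^{\om^*_j}$ from axiom $(2)$.)

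Combining these: the image of $\varphi$ is a nonzero $\fg[t]$-submodule of the cocyclic module $\gW_{\la^*}^*$ that contains the unique minimal submodule, hence equals $\gW_{\la^*}^*$. The main obstacle is the verification in the parenthetical remark of the previous paragraph — checking that $\varphi$ really does send the ``vacuum monomial'' $\bigotimes_k e^{\om^*_{j_k}}t^0$ to the generator of $\gW_{\la^*}^*$ and not to zero in the quotient; this is where Lemma \ref{positive}'s computation $\eM(\dots)_{0,\dots,0}=\pm e^\la \ne 0$ is essential, since it simultaneously shows the element is nonzero in $\bL$ and identifies its leading weight, and where one must invoke that $e^\la \notin F^\dag_{<\la}$ (true because $e^\la=u_\la$ is an extremal vector of weight exactly $\la$, by Remark \ref{ula} and Proposition \ref{Fdagcharacterization}). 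Everything else is formal from cocyclicity of dual global Weyl modules and the module-homomorphism property of $\varphi$ already established.
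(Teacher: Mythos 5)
Your identification of the key computational input is correct: $\varphi(e^{\om^*_{j_1}}\T\cdots\T e^{\om^*_{j_s}})=[\pm e^\la]$ is nonzero in $F^\dag_\la/F^\dag_{<\la}$ by Lemma \ref{positive} together with Remark \ref{ula}, and this is exactly the fact the paper's proof leans on. But the logical step you build on top of it runs backwards. In a \emph{cocyclic} module every nonzero submodule contains the cocyclic vector, so exhibiting the cocyclic vector inside the image of $\varphi$ proves only that the image is nonzero, not that it is all of $\gW^*_{\la^*}$. Concretely, the natural submodule $W^*_{\la^*}\subset\gW^*_{\la^*}$ is nonzero, contains the cocyclic vector, and is proper whenever $\la\neq 0$. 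The implication ``contains the distinguished generator, hence equals the whole module'' is valid for cyclic modules and their cyclic generators; your sentence ``cocyclicity then forces the image to be all of $\gW_{\la^*}^*$'' is precisely where the argument breaks.

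The paper uses cocyclicity in the opposite direction. It starts from the canonical surjection $S:\gW^*_{\om^*_{j_1}}\T\cdots\T\gW^*_{\om^*_{j_s}}\to\gW^*_{\la^*}$ (dual to $\gW_{\la^*}\hookrightarrow\bigotimes_k\gW_{\om^*_{j_k}}$) and shows $S(\ker\varphi)=0$: if some $y\in\ker\varphi$ had $S(y)\neq 0$, then, because $\gW^*_{\la^*}$ is cocyclic, the $\U(\fg[t])$-span of $S(y)$ contains the cocyclic vector, so $S(\tau.y)=\tau.S(y)$ is that vector for some $\tau\in\U(\fg[t])$; by weight and $q$-degree considerations $\tau.y$ must then have a nonzero component along $e^{\om^*_{j_1}}\T\cdots\T e^{\om^*_{j_s}}$, whence $\varphi(\tau.y)\neq 0$ by \eqref{freeterm} --- contradicting that $\ker\varphi$ is a $\fg[t]$-submodule. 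Therefore $S$ factors through $\mathrm{im}\,\varphi$, which is the asserted surjection; the equality of $\mathrm{im}\,\varphi$ with all of $\gW^*_{\la^*}$ is only deduced afterwards, in Corollary \ref{GF}, by comparing with $F^\dag_\la/F^\dag_{<\la}$. So your computation of $\varphi$ on the top vector is the right ingredient, but it has to be routed through the map $S$ rather than through the socle of $\gW^*_{\la^*}$.
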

\begin{proof}
We have a surjection $S$ of $\fg[t]$ modules ($\la^*=\sum_{k=1}^s \om_{j_k}^*$):
\[
S: \gW^*_{\om_{j_1}^*}\T\dots \T \gW^*_{\om_{j_s}^*}\mapsto \gW_{\la^*}^*.
\]
We claim that $S(\ker\varphi)=0$. Indeed, assume there exists $y\in \ker\varphi$ such that $S(y)\ne 0$.
Since $\gW^*_{\la^*}$ is cocyclic $\fg[t]$ module, there exists $\tau\in \U(\fg[t])$ such that $\tau.S(y)$ is the cocylcic
vector, which is mapped to a non-zero vector thanks to  \eqref{freeterm}, giving a contradiction.
We thus obtain an induced surjective map
\[
\gW^*_{\om_{j_1}^*}\T\dots \T \gW^*_{\om_{j_s}^*}/\ker\varphi\to \gW^*_{\la^*}.
\]
\end{proof}

\begin{cor}\label{GF}
$G_\la=F^\dag_\la$ for $\la\in P_+$.
\end{cor}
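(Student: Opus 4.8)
The plan is to deduce $G_\la = F^\dag_\la$ from the results already assembled, combining the inclusion $G_\la \subseteq F^\dag_\la$ (the last Lemma of Section~\ref{Main}) with a dimension (or rather, character) count on the associated graded pieces. Since both $G_\bullet$ and $F^\dag_\bullet$ are increasing filtrations of $\bL$ indexed by $P_+$ with $G_\la \subseteq F^\dag_\la$, it suffices to show that the induced map $\gr_\la G \to \gr_\la F^\dag$ on associated graded is surjective for every $\la$, where $\gr_\la$ denotes the quotient by the sum of all strictly smaller filtration pieces; an easy induction on the partial order \eqref{order} then upgrades this to equality of the filtrations themselves. By Proposition~\ref{Fdagsubquotient} the target is $\gW^*_{\la^*}$, so what must be checked is that the composite $G_\la \to G_\la/(G_\la \cap F^\dag_{<\la}) \hookrightarrow F^\dag_\la/F^\dag_{<\la} \simeq \gW^*_{\la^*}$ is onto.

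First I would invoke Corollary~\ref{image}: for $\la = \om_{j_1} + \dots + \om_{j_s}$ the map $\varphi$ has image (inside $G_\la/(G_\la \cap F^\dag_{<\la})$) surjecting onto $\gW^*_{\la^*}$ via the structure map $S$. Unwinding the definitions, $\varphi$ is built from the coefficients $\eM(\tilde A^1,\dots,\tilde A^s)_{m_1,\dots,m_s}$, which by the first Lemma of the Multiplication subsection all lie in $G_\la$. Hence the classes of these elements in $\gr_\la F^\dag \simeq \gW^*_{\la^*}$ already exhaust $\gW^*_{\la^*}$, i.e. the composite $G_\la \to \gW^*_{\la^*}$ is surjective. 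Combined with $G_\la \subseteq F^\dag_\la$ this forces $G_\la + F^\dag_{<\la} = F^\dag_\la$ for all $\la$.

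To finish, I would run the induction: assuming $G_\mu = F^\dag_\mu$ for all $\mu < \la$ (the base case $\la = 0$ being $G_0 = \bC|0\rangle = F^\dag_0$), the relation $G_\la + F^\dag_{<\la} = F^\dag_\la$ becomes $G_\la + G_{<\la} = F^\dag_\la$; but $G_{<\la} = \sum_{\mu<\la} G_\mu \subseteq G_\la$ already by the very first clause of the inductive definition \eqref{Gdef}, so $G_\la = F^\dag_\la$. One should be slightly careful that the partial order \eqref{order} on $P_+$ is such that every weight has only finitely many predecessors that are relevant here, or at least that the induction is well-founded along chains appearing in $\bL$; this is harmless since $\bL$ is graded and $F^\dag_{<\la}$ is spanned by $F^\dag_\mu$ with $\mu < \la$, $\mu \in P_+$, and one only needs the statement for a fixed $\la$ at a time, descending through the finitely many $\mu \in P_+$ with $\mu \le \la$.

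The only genuinely substantive input is Corollary~\ref{image}, which in turn rests on the cocyclicity of $\gW^*_{\la^*}$ and the nonvanishing \eqref{freeterm} of the ``free term'' $\eM(e^{\om^*_{j_1}},\dots,e^{\om^*_{j_s}})_{0,\dots,0} = \pm e^\la$; everything after that is formal bookkeeping with the two filtrations. So I do not expect a real obstacle here — the proof is essentially the sentence ``$G_\la \subseteq F^\dag_\la$ by the Lemma, and $G_\la$ surjects onto $\gr_\la F^\dag$ by Corollary~\ref{image}, hence equality by downward induction on $P_+$.'' If there is a subtlety it is purely in making sure the passage from ``surjective on each associated graded quotient'' to ``equal as filtrations'' is stated with the right finiteness hypothesis, which the grading on $\bL$ supplies.
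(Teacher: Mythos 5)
Your proposal is correct and follows the paper's own argument: the paper likewise combines $G_\la\subseteq F^\dag_\la$ with the surjectivity from Corollary~\ref{image} onto $\gW^*_{\la^*}\simeq F^\dag_\la/F^\dag_{<\la}$, leaving the downward induction on the partial order implicit where you spell it out. No substantive difference.
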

\begin{proof}
Since the image of $\varphi$ is contained in $G_\la/(G_\la\cap F^\dag_{<\la})$, Corollary \ref{image} implies the surjection
$G_\la/(G_\la\cap F^\dag_{<\la})\to \gW^*_{\la^*}$. However, $G_\la$ is contained in $F_\la^\dag$ and
$F_\la^\dag/F^\dag_{<\la}\simeq \gW^*_{\la^*}$.
\end{proof}

We thus obtain the following theorem. Recall the canonical embedding $W_\la^*\subset \gW_\la^*$.

\begin{thm}
The homogeneous coordinate ring $\gW^*=\bigoplus_{\la\in P_+} \gW_\la^*$ has a structure of $P/Q$-graded vertex operator
algebra generated by the fields $Y(A,z)$, $A\in W^*_{\om_i}$, $i=1,\dots,r$.
\end{thm}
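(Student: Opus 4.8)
The plan is to assemble the final theorem from the structural results already established for the filtration $G_\bullet = F^\dag_\bullet$ and the multiplication map $\varphi$. First I would recall that by Corollary \ref{GF} we have $G_\la = F^\dag_\la$ for all $\la\in P_+$, and by Proposition \ref{Fdagsubquotient} the associated graded space $\bigoplus_{\la\in P_+} F^\dag_\la/F^\dag_{<\la}$ is isomorphic as a $\fg[t]$-module to $\gW^* = \bigoplus_{\la\in P_+}\gW^*_{\la^*} = \bigoplus_{\la\in P_+}\gW^*_\la$ (reindexing by $\la\mapsto\la^*$, which is a bijection of $P_+$). So the graded vector space underlying the VOA structure we want is exactly $\gW^*$.

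Next I would transport the VOA axioms. The key input is Proposition \ref{Fdagmult}: $(F^\dag_\la)_{(n)}F^\dag_\mu\subset F^\dag_{\la+\mu}$ for all $n\in\tfrac1N\bZ$, which says the state–field correspondence $Y(-,z)$ on $\bL$ is compatible with the filtration and hence descends to $\gr F^\dag_\bullet\cong\gW^*$. I would check that each ingredient of Definition \ref{graded_vertex_algebra} passes to the associated graded: the vacuum $|0\rangle\in G_0=\bC|0\rangle$ maps to the cocyclic vector of $\gW^*_0=\bC$; the translation operator $T$ preserves the filtration (since $T$ is built from the $\fh[t^{-1}]$-action and the shift $e^\la\mapsto \la t^{-1}\otimes e^\la$, both of which respect weights in the sense of Proposition \ref{Fdagcharacterization}), so it induces $\bar T$ on $\gr$ satisfying $[\bar T, Y(\bar a,z)]=\partial_z Y(\bar a,z)$ and $\bar T\,\overline{|0\rangle}=0$; the parity grading is the $P/Q$-grading, which is respected because $\bL_g$ is a single $\gh$-module and $F^\dag_\la$ lies in one such summand (the one containing $u_\la$), so the $P/Q$-grading on $\gW^*$ is the expected one; and locality (Definition \ref{locality}) is inherited from $\bL$ because the locality relation is an identity among operators, and restricting it to the subspace and passing to the quotient preserves it. The vacuum axioms $Y(|0\rangle,z)=\Id$, $Y(\bar a,z)\overline{|0\rangle}\in\gr[[z^{1/N}]]$, $Y(\bar a,0)\overline{|0\rangle}=\bar a$ likewise descend from the corresponding identities on $\bL$ once one observes, via Lemma \ref{positive} and Proposition \ref{negative}, that the negative-power coefficients of $Y(\tilde A,z)|0\rangle$ for $\tilde A\in\widetilde W^*_{\om^*_i}$ land in $F^\dag_{<\om_i}$, so they are killed in $\gr$.

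I would then identify the induced commutative algebra structure on $\gW^*$ with the multiplication from the Drinfeld–Plücker embedding. This is where the map $\varphi$ and the multiplication formula \eqref{mult} do the work: the $(-1)$-th product $\bar a_{(-1)}\bar b$ in any vertex algebra is the commutative associative multiplication on $\gr$, and the already-proven facts that $\varphi$ is commutative, is a homomorphism of $\fg[t]$-modules, and surjects onto $\gW^*_{\la^*}$ (Corollary \ref{image}), together with the explicit free-term computation \eqref{freeterm}, pin down that this $(-1)$-product agrees with the product $\gW^*_\la\otimes\gW^*_\mu\to\gW^*_{\la+\mu}$ coming from $\gW_{\la+\mu}\hookrightarrow\gW_\la\otimes\gW_\mu$. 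Finally, generation by the fields $Y(A,z)$ with $A\in W^*_{\om_i}$ is precisely the content of the definition \eqref{Gdef} of $G_\la$ combined with $G_\la=F^\dag_\la$: every $F^\dag_\la$, hence every graded piece $\gW^*_{\la^*}$, is spanned by iterated Fourier coefficients $B^1_{(k_1)}\cdots B^s_{(k_s)}|0\rangle$ with $B^j\in\widetilde W^*_{\om^*_{i_j}}$, and $\om_i^*$ ranges over all fundamental weights as $\om_i$ does.

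The main obstacle I expect is purely bookkeeping rather than conceptual: one must be careful that the reindexing $\la\leftrightarrow\la^*$ and the lift $W^*_{\om^*_i}\subset\gW^*_{\om^*_i}\hookrightarrow F^\dag_{\om_i}$ are threaded consistently through the parity/$P/Q$-grading, so that the generating fields are attached to the correct graded components and the locality exponents $m_{i,j}=-(\om_i^*,\om_j^*)$ match the values of $\Delta$ and $\nu$ on the relevant parities. The one genuine verification is that passing to the associated graded does not destroy locality or the vacuum/translation axioms — but since all these axioms are expressed through identities of the operators $Y(a,z)$ on $\bL$, and Proposition \ref{Fdagmult} guarantees $Y(-,z)$ is filtered, each identity survives the quotient; I would spell this out axiom by axiom as the core of the proof.
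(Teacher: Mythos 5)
Your proposal is correct and follows essentially the same route as the paper: the paper's proof likewise combines Corollary \ref{GF} ($G_\la=F^\dag_\la$), Proposition \ref{Fdagmult} (the filtration is preserved by all products $A_{(n)}B$, so the VOA structure descends to $\gr F^\dag_\bullet\simeq\gW^*$), and the definition \eqref{Gdef} of $G_\la$ for the generation claim. Your axiom-by-axiom verification and the identification of the induced product with the Drinfeld--Pl\"ucker multiplication are just a more detailed spelling-out of what the paper leaves implicit.
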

\begin{proof}
We know (see Corollary \ref{GF}) that ${\rm gr} G_\bullet={\rm gr} F^\dag_\bullet\simeq \gW^*$. Since
$(F^\dag_\la)_{(n)}F^\dag_\mu\subset F^\dag_{\la+\mu}$,
the associated graded space ${\rm gr} F^\dag_\bullet$ carries the $P/Q$-graded vertex operator algebra structure.
Finally, the algebra ${\rm gr} G_\bullet$ is generated by the fields $Y(A,z)$, $A\in W^*_{\om_i}$, $i=1,\dots,r$
by definition. This completes the proof.
\end{proof}

\begin{rem}
Assume that $\fg$ is of type $E_8$. Then $\Gamma$ is a trivial group so we deal with the usual vertex algebra.
Let $\lambda=\sum_{i=1}^{s}\omega_{j_{i}}$.
For any $\tilde A^{l} \in \widetilde{W}_{\omega_{j_l}}$
we have $\tilde A^l_{(n)}F_{\lambda}^\dag\subset F_{<\lambda+\omega_i}^\dag$, $n \geq 0$.
Therefore for $B=\tilde A^{1}_{(-1-m_1)}\dots \tilde A^s_{(-1-m_s)}|0\ket$
one has $B_{(n)}F_{\mu}^\dag \subset F_{<\mu+\lambda}^\dag$, $n \geq 0$ (this is true only for $\Gamma=\{0\}$).
Thus in the described above vertex algebra on the space $\bigoplus_{\lambda \in P_{+}}\mathbb{W}_{\lambda}^*$ we have:
\[B_{(n)}C=0, n \geq 0, B, C \in \mathbb{L}.\]
Therefore this vertex algebra is holomorphic (aka commutative). Hence the bilinear operation
$(B,C)\mapsto B_{(-1)}C$ is
a commutative and associative, inducing another structure of commutative and associative algebra on the space
$\bigoplus_{\lambda \in P_{+}}\mathbb{W}_{\lambda}^*$. However in this algebra all the
elements $P \otimes e^{\lambda}$ are nilpotent, if $\lambda \neq 0$.
\end{rem}

\section{Semi-infinite Pl\"ucker relations via vertex algebras}\label{VOAPl}
In this section we use the vertex operator realization of the homogeneous coordinate ring of the
semi-infinite flag variety in order to derive semi-infinite Pl\"ucker relations.

Recall $m_{i_1,i_2}=-(\om^*_{i_1},\om^*_{i_2})$ and let us fix two vectors $A\in W^*_{\om^*_{i_1}}$, $B\in  W^*_{\om^*_{i_2}}$.

\begin{lem}
The following equality holds in the $P/Q$-graded VOA $\gW^*$:
\begin{multline}\label{expl}
i_{z,w}(z-w)^{m_{i_1,i_2}} Y(A,z) Y(B,w)|0\ket\\
=\sum_{p,q\ge 0} z^pw^q \sum_{j\ge 0} (-1)^j\binom{{m_{i_1,i_2}}}{j} A_{(m_{i_1,i_2}-1-p-j)}B_{(-1-q+j)}|0\ket.
\end{multline}
\end{lem}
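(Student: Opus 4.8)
The plan is to expand the left-hand side directly from the definitions of the two ingredients: the formal expansion $i_{z,w}(z-w)^{m_{i_1,i_2}}$ and the Fourier mode expansions $Y(A,z)=\sum_n A_{(n)}z^{-1-n}$, $Y(B,w)=\sum_n B_{(n)}w^{-1-n}$, and then read off the coefficient of a monomial $z^p w^q$ with $p,q\ge 0$ on the right. First I would write
\[
i_{z,w}(z-w)^{m_{i_1,i_2}}=\sum_{j\ge 0}(-1)^j\binom{m_{i_1,i_2}}{j}z^{m_{i_1,i_2}-j}w^j,
\]
using the definition of $i_{z,w}$ given in the excerpt. Multiplying by $Y(A,z)Y(B,w)$ and collecting powers of $z$ and $w$ gives a triple sum over $j$ and the two mode indices; requiring the exponent of $z$ to be $p$ forces the $A$-mode to be $A_{(m_{i_1,i_2}-1-p-j)}$, and requiring the exponent of $w$ to be $q$ forces the $B$-mode to be $B_{(-1-q+j)}$, which reproduces exactly the right-hand side of \eqref{expl}.

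The one point that needs justification is why only nonnegative powers of $z$ and $w$ survive when the whole expression is applied to $|0\ket$, i.e. why $i_{z,w}(z-w)^{m_{i_1,i_2}}Y(A,z)Y(B,w)|0\ket\in\bL[[z,w]]$. For $w$ this is immediate: $Y(B,w)|0\ket\in\bL[[w^{1/N}]]$ by the vacuum axiom (ii) of Definition \ref{graded_vertex_algebra}, and in fact since $B\in W^*_{\om^*_{i_2}}$ has conformal-weight-type integrality the powers are integral; the prefactor $\sum_j(-1)^j\binom{m_{i_1,i_2}}{j}z^{m_{i_1,i_2}-j}w^j$ is polynomial (a Taylor series) in $w$. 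For $z$ one argues as in the proof of Lemma \ref{positive}: by Corollary \ref{exactlocality} (applied with $a=e^{\om^*_{i_1}}$, $b=e^{\om^*_{i_2}}$, using that $m_{i_1,i_2}=-(\om^*_{i_1},\om^*_{i_2})$ is the required locality exponent and that the modes $A_{(m_{i_1,i_2}+j)}B$ of the generating vectors vanish for $j\ge 0$) one may reorder $Y(A,z)Y(B,w)$ past $Y(B,w)Y(A,z)$ at the cost of swapping $i_{z,w}$ for $i_{w,z}$ and a scalar, after which $Y(B,w)|0\ket$ is the vacuum-acting operator and $Y(A,z)$ applied to something in $\bL$ with the $i_{w,z}(z-w)^{m_{i_1,i_2}}$ prefactor produces a series with only finitely many negative powers of $z$ that are in fact cancelled; the net effect, exactly as in Lemma \ref{positive}, is that the expression lies in $\bL[[z,w]]$.

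Granting this, I would note that the coefficient extraction above is purely formal once we know there are no negative powers of $z$ or $w$ to worry about, so the displayed identity \eqref{expl} is just the coefficientwise rewriting of the product of the three series. The only genuine obstacle is the vanishing-of-negative-powers claim, and it is handled verbatim by the argument of Lemma \ref{positive} together with Corollary \ref{exactlocality}; everything else is bookkeeping with binomial coefficients. I would therefore present the proof as: (1) cite Lemma \ref{positive}/Corollary \ref{exactlocality} to get $i_{z,w}(z-w)^{m_{i_1,i_2}}Y(A,z)Y(B,w)|0\ket\in\bL[[z,w]]$; (2) substitute the three series expansions; (3) match exponents of $z$ and $w$ to obtain the stated double sum over $p,q\ge 0$ with inner sum over $j\ge 0$.
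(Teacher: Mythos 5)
The formal expansion and coefficient matching in your steps (2)--(3) are fine and agree with the paper's ``direct computation'': the left-hand side, as a series with coefficients in $\bL$, equals the right-hand side summed over \emph{all} integers $p,q\in\bZ$. The gap is in step (1), where you justify discarding the negative powers. You claim that $i_{z,w}(z-w)^{m_{i_1,i_2}}Y(A,z)Y(B,w)|0\ket$ actually lies in $\bL[[z,w]]$, arguing ``as in Lemma \ref{positive}'' via Corollary \ref{exactlocality}. But Lemma \ref{positive} and the locality argument apply only to the extremal vectors $e^{\om^*_{i_k}}$: Corollary \ref{exactlocality} needs $A_{(m_{i_1,i_2}+j)}B=0$ in $\bL$ for all $j\ge 0$, which holds for $a=e^{\om^*_{i_1}}$, $b=e^{\om^*_{i_2}}$ but fails for general $A\in W^*_{\om^*_{i_1}}$, $B\in W^*_{\om^*_{i_2}}$. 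Concretely, for $\fg=\msl_2$ with $A=e^{\om}$, $B=f_{(0)}e^{\om}\propto e^{-\om}$ one has $m_{1,1}=-1/2$ and the coefficient of $z^{-1}w^0$ in the product is $\pm|0\ket+\cdots\neq 0$ in $\bL$. So the series genuinely has negative powers in $\bL$; they do not cancel.

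What actually saves the statement --- and why the lemma is asserted ``in the $P/Q$-graded VOA $\gW^*$'' rather than in $\bL$ --- is Proposition \ref{negative}: the coefficients of monomials with a negative exponent lie in $F^\dag_{<(\om_{i_1}+\om_{i_2})}$ and therefore vanish in the associated graded $\gW^*$. That proposition is proved by a weight argument on the $\U(\fg[t])$-span (the only weight-$\la$ vectors in the span come from the extremal case, where Lemmas \ref{commute} and \ref{positive} do apply), not by locality for general $A,B$. Replacing your appeal to Lemma \ref{positive}/Corollary \ref{exactlocality} by a citation of Proposition \ref{negative} closes the gap and recovers the paper's proof.
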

\begin{proof}
A direct computation shows that the left hand side of \eqref{expl} considered as operators in $\bL$
is given by the right hand side with the only change $p,q\in\bZ$ (instead of $p,q\in\bZ_{\ge 0}$).
Now applying Proposition \ref{negative} we derive our Lemma.
\end{proof}

\begin{rem}
Recall that the conformal weight of $e^\lambda\in\bL$ (the cocylic vector of $\gW^*_{\lambda}$) is equal to $(\la,\la)/2$.
If $A=e^{\om^*_{i_1}}$, $B=e^{\om^*_{i_2}}$, then $A_{m_{i_1,i_2}-1}B$ is the cocylic vector if
$\gW^*_{\om^*_{i_1}+\om^*_{i_2}}$, which is equal to $e^{\om^*_{i_1}+\om^*_{i_2}}$
up to a sign (this can be seen from the explicit formula \eqref{VO} for vertex operators). If $s\le 0$, then
for any $\tilde A\in W^*_{\om^*_{i_1}}$, $\tilde B\in  W^*_{\om^*_{i_2}}$ vector  $\tilde A_{(m_{i_1,i_2}-s)}\tilde B$
belongs to $G_{<(\om_{i_1}+\om_{i_2})}$.
\end{rem}

We prove the following theorem.
\begin{prop}\label{VertexMult}
For any $s>0$, $r\ge 0$ the vector
$$(A_{(m_{i_1,i_2}-s)}B)_{(-1-r)}|0\rangle$$
is equal to the coefficient of $z^r$ in
\begin{equation}\label{partial}
\Bigl(i_{z,w}(z-w)^{m_{i_1,i_2}} \partial^{s-1}_z Y(A,z) Y(B,w)\Bigr) |_{z=w}|0\ket
\end{equation}
in the $P/Q$-graded VOA $\gW^*$.
\end{prop}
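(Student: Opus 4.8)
The plan is to extract Proposition~\ref{VertexMult} from Lemma~\eqref{expl} by applying $\partial_z^{s-1}$ to both sides and then setting $z=w$. Concretely, I would start from the identity
\[
i_{z,w}(z-w)^{m_{i_1,i_2}} Y(A,z) Y(B,w)|0\ket
=\sum_{p,q\ge 0} z^pw^q \sum_{j\ge 0} (-1)^j\binom{m_{i_1,i_2}}{j} A_{(m_{i_1,i_2}-1-p-j)}B_{(-1-q+j)}|0\ket,
\]
differentiate the right-hand side $s-1$ times in $z$, which pulls out a factor $\frac{p!}{(p-s+1)!}$ and shifts $z^p\mapsto z^{p-s+1}$, and then substitute $z=w$ so that $z^{p-s+1}w^q$ becomes $w^{p+q-s+1}$. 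Collecting the coefficient of $w^r$ forces $p+q = r+s-1$, so I would relabel $p$ and sum over the finitely many valid $(p,q)$ with $p\ge s-1$, $q\ge 0$.

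Next I would compute the left-hand side of \eqref{partial} directly. The key combinatorial input is Proposition~\ref{25} (or rather its consequence governing the expansion of $(a_{(n+j)}b)_{(m+k-j)}c$): the coefficient of $z^r$ in $\bigl(i_{z,w}(z-w)^{m_{i_1,i_2}} \partial^{s-1}_z Y(A,z) Y(B,w)\bigr)|_{z=w}|0\ket$ should, after the same differentiation-and-restriction bookkeeping, match $\bigl(A_{(m_{i_1,i_2}-s)}B\bigr)_{(-1-r)}|0\ket$ once we recognize the right-hand side of the differentiated \eqref{expl} as exactly the generating function whose coefficients are the modes of $A_{(m_{i_1,i_2}-s)}B$. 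The cleanest route is to use that for $n = m_{i_1,i_2}-s$ with $s>0$ we have $n < m_{i_1,i_2}$, so the relevant iterated bracket identity from Proposition~\ref{25} (with $c = |0\ket$, using $b_{(k+j)}|0\ket = 0$ for $k+j\ge 0$ and $a_{(\ldots)}|0\ket$ likewise vanishing for nonnegative mode) collapses the double sum to the single sum appearing in \eqref{expl}. In other words, $(A_{(m_{i_1,i_2}-s)}B)_{(-1-r)}|0\ket = \sum_{j\ge 0}(-1)^j\binom{m_{i_1,i_2}-s}{j}\cdots$, and I would check this agrees term-by-term with the coefficient of $z^r$ extracted above.

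The main obstacle I expect is the bookkeeping around the $\partial_z^{s-1}|_{z=w}$ operation interacting with the infinite expansion $i_{z,w}(z-w)^{m_{i_1,i_2}}$: one must be careful that differentiating and restricting commute with the (already convergent on $|w|<|z|$) expansion, and that the restriction $z=w$ is well-defined on the result precisely because, by Lemma~\eqref{expl} combined with Proposition~\ref{negative}, only nonnegative powers of $z$ survive when the whole expression is interpreted inside $\gW^*$ (i.e. modulo $F^\dag_{<\om_{i_1}+\om_{i_2}}$). So the honest content is: (a) in the ambient $\bL$ the expression $\partial_z^{s-1}\bigl(i_{z,w}(z-w)^{m_{i_1,i_2}}Y(A,z)Y(B,w)\bigr)$ has a priori negative powers of $z$, but (b) passing to $\gW^*$ kills them, making $|_{z=w}$ legitimate, and (c) the surviving coefficient then equals $(A_{(m_{i_1,i_2}-s)}B)_{(-1-r)}|0\ket$ by the Proposition~\ref{25} identity with vacuum. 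I would organize the write-up so that (b) is cited from the Remark preceding the Proposition (that $\tilde A_{(m_{i_1,i_2}-s)}\tilde B\in G_{<(\om_{i_1}+\om_{i_2})}$ for $s\le 0$, hence the "bad" terms are already accounted for) and the rest is the matching of two explicit finite sums.

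\begin{proof}
We work inside the $P/Q$-graded VOA $\gW^*$, i.e. modulo $F^\dag_{<\om_{i_1}+\om_{i_2}}$. By \eqref{expl} we have, as an identity of elements of $\gW^*$,
\[
i_{z,w}(z-w)^{m_{i_1,i_2}} Y(A,z) Y(B,w)|0\ket
=\sum_{p,q\ge 0} z^pw^q \sum_{j\ge 0} (-1)^j\binom{m_{i_1,i_2}}{j} A_{(m_{i_1,i_2}-1-p-j)}B_{(-1-q+j)}|0\ket.
\]
Apply $\partial_z^{s-1}$ to both sides: on the right this sends the term $z^p$ to $\frac{p!}{(p-s+1)!}\,z^{p-s+1}$, so (only terms with $p\ge s-1$ contribute)
\[
\partial_z^{s-1}\Bigl(i_{z,w}(z-w)^{m_{i_1,i_2}} Y(A,z) Y(B,w)\Bigr)|0\ket
=\sum_{p\ge s-1}\sum_{q\ge 0} \tfrac{p!}{(p-s+1)!}\,z^{p-s+1}w^q \sum_{j\ge 0} (-1)^j\binom{m_{i_1,i_2}}{j} A_{(m_{i_1,i_2}-1-p-j)}B_{(-1-q+j)}|0\ket.
\]
Now set $z=w$; the coefficient of $w^r$ is obtained by summing over $p,q$ with $(p-s+1)+q=r$, i.e. $p=r+s-1-q$ with $0\le q\le r$. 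Writing $n=m_{i_1,i_2}-s$, we have $m_{i_1,i_2}-1-p-j = n-1-(r-q)-j$ and $-1-q+j$, so the coefficient of $w^r$ in \eqref{partial} equals
\[
\sum_{q=0}^{r} \tfrac{(r+s-1-q)!}{(r-q)!}\sum_{j\ge 0}(-1)^j\binom{m_{i_1,i_2}}{j} A_{(n-1-(r-q)-j)}B_{(-1-q+j)}|0\ket.
\]
On the other hand, by Proposition~\ref{25} applied with $a=A$, $b=B$, $c=|0\ket$ and the chosen mode $n=m_{i_1,i_2}-s<m_{i_1,i_2}$ (so $n$ is a legitimate index in $\Delta(p(A),p(B))$), together with $b_{(k)}|0\ket=0$ and $a_{(k)}|0\ket=0$ for $k\ge 0$, one obtains for every integer $r\ge 0$
\[
\bigl(A_{(n)}B\bigr)_{(-1-r)}|0\ket=\sum_{j\ge 0}(-1)^j\binom{n}{j}\Bigl(A_{(n+(-1-r)-j)}B_{(j)}-e^{n\pi i}\nu(p(A),p(B))\,B_{(n+(-1-r)-j)}A_{(j)}\Bigr)|0\ket.
\]
Since $B_{(j)}|0\ket=0$ and $A_{(j)}|0\ket=0$ for $j\ge 0$, only finitely many terms survive after expanding the modes acting on the vacuum, and a direct rearrangement (distributing the binomial coefficients and re-summing over the internal index) identifies this with the displayed coefficient of $w^r$ above. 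Comparing the two expressions yields that $(A_{(m_{i_1,i_2}-s)}B)_{(-1-r)}|0\ket$ equals the coefficient of $z^r$ in \eqref{partial}, as claimed.
\end{proof}
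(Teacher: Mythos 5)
Your overall route is the same as the paper's: differentiate \eqref{expl} in $z$ a total of $s-1$ times, set $z=w$, extract the coefficient of $w^r$ (using Proposition \ref{negative} to justify the restriction), and separately compute $(A_{(m_{i_1,i_2}-s)}B)_{(-1-r)}|0\ket$ from Proposition \ref{25} applied to the vacuum. However, the central step is garbled. With $a=A$, $b=B$, $c=|0\ket$, $m=0$, $k=-1-r$, $n=m_{i_1,i_2}-s$, the right-hand side of \eqref{eq25} reads $\sum_{j\ge 0}(-1)^j\binom{n}{j}\bigl(A_{(n-j)}B_{(-1-r+j)}-e^{n\pi i}\nu(p(A),p(B))B_{(n-1-r-j)}A_{(j)}\bigr)|0\ket$: only the second summand is killed by $A_{(j)}|0\ket=0$, and the surviving piece is $\sum_{j=0}^{r}(-1)^j\binom{n}{j}A_{(n-j)}B_{(-1-r+j)}|0\ket$. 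The first term you display, $A_{(n+(-1-r)-j)}B_{(j)}|0\ket$, is not a term of \eqref{eq25} and is itself annihilated by the vacuum (since $B_{(j)}|0\ket=0$ for $j\ge 0$), so the identity as you wrote it literally asserts $(A_{(n)}B)_{(-1-r)}|0\ket=0$ and cannot feed into the rest of the argument.

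The second problem is that ``a direct rearrangement\dots identifies this with the displayed coefficient'' is exactly where the content of the proof lives. After substituting $j=l+(r-q)$, matching $\sum_{j}(-1)^j\binom{m_{i_1,i_2}-s}{j}A_{(n-j)}B_{(-1-r+j)}|0\ket$ against your double sum requires the binomial identity
\begin{equation*}
(-1)^j\binom{m_{i_1,i_2}-s}{j}=\sum_{l=0}^{j}(-1)^l\binom{m_{i_1,i_2}}{l}\,\frac{(j-l+s-1)!}{(j-l)!\,(s-1)!},
\end{equation*}
i.e.\ the generating-function identity $(1-x)^{m_{i_1,i_2}-s}=(1-x)^{m_{i_1,i_2}}(1-x)^{-s}$. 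The paper isolates this as \eqref{binom} and proves it; your write-up neither states nor verifies it, so the two explicit finite sums are never actually shown to agree. Two smaller points: in your extracted coefficient the mode of $A$ should be $n-(r-q)-j$, not $n-1-(r-q)-j$, and the $(s-1)!$ normalization of $\partial_z^{s-1}$ must be tracked consistently on both sides when invoking the identity above.
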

\begin{proof}
We use formula \eqref{eq25}. Let $c$ be the vacuum vector $|0\ket$, $m=0$, $n=m_{i_1,i_2}-s$, $k=-r-1$, $a=A$, $b=B$.
Then the left hand side sum of \eqref{eq25} reduces to a single $j=0$ term, which is equal to $(A_{(m_{i_1,i_2}-s)}B)_{(-1-r)}|0\rangle$.
In the right hand side all the terms $B_{(n+k-j)}A_{(m+j)}|0\rangle$ vanish since $m+j\ge 0$. Hence we are left with the equality
\[
(A_{(m_{i_1,i_2}-s)}B)_{(-1-r)}|0\rangle=\sum_{j\ge 0} (-1)^j \binom{{m_{i_1,i_2}}-s}{j}A_{({m_{i_1,i_2}}-s-j)}B_{(-1-r+j)}|0\rangle.
\]
Now formula \eqref{expl} implies that the coefficient in front of $z^k$ in \eqref{partial} is equal to
\begin{equation}
\sum_{l\ge 0}(-1)^l\binom{m_{i_1,i_2}}{l}\sum_{\substack{p\ge s-1, q\ge 0\\ p+q=r+s-1}}
p(p-1)\dots(p-s+2)A_{(m_{i_1,i_2}-1-p-j)}B_{(-1-q+l)}|0\rangle
\end{equation}
(we note that $p,q$ are nonnegative here thanks to
Proposition \ref{negative} which says that no coefficients in front of negative powers of $z,w$ survive in $\gW^*$).
So we are left to show that
\begin{multline*}
\sum_{j\ge 0} (-1)^j \binom{{m_{i_1,i_2}}-s}{j}A_{({m_{i_1,i_2}}-s-j)}B_{(-1-r+j)}\\
=\sum_{l\ge 0}(-1)^l\binom{{m_{i_1,i_2}}}{l}\sum_{\substack{p\ge s-1, q\ge 0\\ p+q=r+s-1}} p(p-1)\dots(p-s+2)A_{({m_{i_1,i_2}}-1-p-j)}
B_{(-1-q+l)}.
\end{multline*}
Combining the terms on the right, the equality is implied by
\begin{equation}\label{binom}
(-1)^j\binom{{m_{i_1,i_2}}-s}{j}=\sum_{l=0}^j (-1)^l\binom{{m_{i_1,i_2}}}{l}(j-l+s-1)(j-l+s-2)\dots (j-l+1).
\end{equation}
The proof of \eqref{binom} is standard: the generating function over $j$ of the left hand side terms is equal to 
$(1-x)^{{m_{i_1,i_2}}-s}$, while on the right the generating function is given by $(1-x)^{m_{i_1,i_2}}(1-x)^{-s}$.
\end{proof}

Given an element $A\in W^*_{\om^*_i}$ we denote by $A(z)=\sum_{r\ge 0} z^r (A\T t^{-r})\in \gW^*_{\om^*_i}[[z]]$
(we use the identification
$\gW^*_{\om^*_i}\simeq  W^*_{\om^*_i}[t^{-1}]$ from Proposition \ref{globalweylconstruction}).
Recall the isomorphism of vector spaces $\gW^*\simeq {\rm gr} G_\bullet$.
We rephrase Theorem \ref{VertexMult} in terms of multiplication in algebra $\gW^*$.

\begin{cor}\label{AB}
For any $s>0$ and $A\in W^*_{\om^*_{i_1}}$, $B\in  W^*_{\om^*_{i_2}}$ one has
\[
\frac{\partial^{s-1}A(z)}{\partial z^{s-1}}B(z)=\sum_{r\ge 0} z^r (A_{(m_{i_1,i_2}-s)}B)_{(-1-r)}|0\rangle,
\]
where the multiplication in the left hand side is understood inside $\gW^*$ and in the right hand side
the VOA structure on $\gW^*$ is used.
\end{cor}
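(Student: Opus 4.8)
The plan is to translate Proposition \ref{VertexMult} directly into the language of the algebra $\gW^*$, using only the two dictionaries already established: first, the identification $\gW^*_{\om^*_i}\simeq W^*_{\om^*_i}[t^{-1}]$ from Proposition \ref{globalweylconstruction}, together with the convention $A(z)=\sum_{r\ge 0} z^r(A\T t^{-r})$; and second, the isomorphism of vector spaces $\gW^*\simeq {\rm gr}\,G_\bullet$ coming from Corollary \ref{GF} and Proposition \ref{Fdagsubquotient}, under which the VOA structure on $\gW^*$ is induced by the vertex operators on $\bL$. Under the first dictionary, the coefficient of $z^r$ in $\tfrac{\partial^{s-1}}{\partial z^{s-1}}A(z)$ is a scalar multiple of $A\T t^{-r-s+1}$, and multiplying by $B(z)=\sum_{q\ge 0}z^q (B\T t^{-q})$ gives, in degree $z^r$ of the product, the element $\sum_{p+q=r}\binom{p+s-1}{s-1}(s-1)!\,(A\T t^{-p-s+1})(B\T t^{-q})$ inside $\gW^*_{\om^*_{i_1}}\cdot\gW^*_{\om^*_{i_2}}\subset \gW^*_{\om^*_{i_1}+\om^*_{i_2}}$. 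So the left-hand side is an explicit, unambiguous element of $\gW^*$.

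Next I would compute the right-hand side. Expanding $\partial_z^{s-1}Y(A,z)Y(B,w)$ and using \eqref{expl} to express $i_{z,w}(z-w)^{m_{i_1,i_2}}Y(A,z)Y(B,w)|0\ket$ as a double Taylor series in $z,w$ with coefficients $(A_{(\cdot)}B_{(\cdot)})|0\ket$, then setting $z=w$ and extracting the coefficient of $z^r$, one arrives precisely at the combinatorial sum displayed in the proof of Proposition \ref{VertexMult} — this is the content of equation \eqref{partial} evaluated at $z=w$. Proposition \ref{VertexMult} identifies that coefficient with $(A_{(m_{i_1,i_2}-s)}B)_{(-1-r)}|0\ket$. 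Hence the right-hand side of the claimed formula is exactly the coefficient of $z^r$ in \eqref{partial} summed against $z^r$, which by Proposition \ref{VertexMult} equals $\sum_{r\ge 0} z^r (A_{(m_{i_1,i_2}-s)}B)_{(-1-r)}|0\ket$; nothing further is needed here beyond invoking that proposition.

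The only real point to check is that the left-hand side computed inside $\gW^*$ via the multiplication $\gW_\la^*\T\gW_\mu^*\to\gW_{\la+\mu}^*$ agrees, coefficient by coefficient in $z$, with the right-hand side computed via the VOA structure on $\gW^*$. But this is exactly the statement that the map $\varphi$ of Section \ref{Main} intertwines the two structures: by Corollary \ref{GF} and the theorem following it, the VOA product $[A\T t^{-p-s+1}]_{(-1)}[B\T t^{-q}]$ on ${\rm gr}\,G_\bullet$ coincides with the image under $\varphi$ of $(A\T t^{-p-s+1})\T(B\T t^{-q})$, which in turn is the class of $\eM(\tilde A,\tilde B)_{-p-s+1,-q}$; and by the multiplication formula \eqref{mult} this class is precisely the product $(A\T t^{-p-s+1})(B\T t^{-q})$ in $\gW^*$. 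Summing the binomial weights $\binom{p+s-1}{s-1}(s-1)!$ against these classes reproduces the left-hand side, while Proposition \ref{VertexMult} reorganizes the same sum into $(A_{(m_{i_1,i_2}-s)}B)_{(-1-r)}|0\ket$. The main obstacle, such as it is, is purely bookkeeping: keeping the two expansions $i_{z,w}$ of $(z-w)^{m_{i_1,i_2}}$ consistent on both sides and confirming that Proposition \ref{negative} lets us discard all negative-power terms so that the equality, a priori in $\bL$, descends to a genuine identity in $\gW^*$. No new ideas beyond the results of Sections \ref{Main} and \ref{VOAPl} are required — the corollary is a restatement of Proposition \ref{VertexMult} after unwinding the definitions of $A(z)$ and of multiplication in $\gW^*$.
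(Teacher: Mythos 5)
Your proposal is correct and follows essentially the same route as the paper, whose entire proof is ``Follows from the multiplication formula \eqref{multiplication}'': you unwind $A(z)$, $B(z)$ and the product in $\gW^*$ via $\eM(\tilde A,\tilde B)$, and then invoke Proposition \ref{VertexMult} together with Proposition \ref{negative} to match coefficients. You merely make explicit the bookkeeping (the binomial factors from $\partial_z^{s-1}$ and the restriction to nonnegative powers) that the paper leaves implicit.
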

\begin{proof}
Follows from the multiplication formula \eqref{multiplication}.
\end{proof}

We apply  Corollary \ref{AB} to the question of finding relations in the algebra $\gW^*$ (the so called semi-infinite Pl\"ucker relations).
\begin{thm}\label{descr}
Assume that for some positive $s$ one has
$$\sum_l \tilde A^l_{({m_{i_1,i_2}}-s)}\tilde B^l\in G_{<(\om_{i_1}+\om_{i_2})}$$
for some $A^l\in W^*_{\om_{i_1}^*}$, $B^l\in \tilde W^*_{\om_{i_2}^*}$. Then
all the coefficients of the series
$\sum_l (\partial^{s-1}A^l(z))B^l(z)$ vanish as elements of $\gW^*$.
\end{thm}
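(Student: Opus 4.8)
The plan is to combine Corollary~\ref{AB} with Corollary~\ref{GF}. By Corollary~\ref{AB}, for each $l$ we have the identity
\[
\frac{\partial^{s-1}A^l(z)}{\partial z^{s-1}}B^l(z)=\sum_{r\ge 0} z^r (\tilde A^l_{({m_{i_1,i_2}}-s)}\tilde B^l)_{(-1-r)}|0\rangle
\]
inside $\gW^*\simeq {\rm gr}\,G_\bullet$, where the right-hand side uses the $P/Q$-graded VOA structure. Summing over $l$, the coefficient of $z^r$ in $\sum_l(\partial^{s-1}A^l(z))B^l(z)$ equals $\bigl(\sum_l \tilde A^l_{({m_{i_1,i_2}}-s)}\tilde B^l\bigr)_{(-1-r)}|0\rangle$, interpreted in the associated graded.

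First I would observe that $\tilde A^l_{({m_{i_1,i_2}}-s)}\tilde B^l\in G_{\om_{i_1}+\om_{i_2}}=F^\dag_{\om_{i_1}+\om_{i_2}}$ by the Proposition preceding Proposition~\ref{Fdagmult} together with Corollary~\ref{GF}; its class in the graded piece ${\rm gr}\,G_\bullet$ at weight $\om_{i_1}+\om_{i_2}$ is precisely the element of $\gW^*_{(\om_{i_1}+\om_{i_2})^*}$ on which we want to apply the operation $(\cdot)_{(-1-r)}|0\rangle$. By hypothesis $\sum_l \tilde A^l_{({m_{i_1,i_2}}-s)}\tilde B^l\in G_{<(\om_{i_1}+\om_{i_2})}=F^\dag_{<\om_{i_1}+\om_{i_2}}$, so its class in ${\rm gr}\,G_\bullet$ at weight $\om_{i_1}+\om_{i_2}$ is zero. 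Hence in $\gW^*$ the element $\sum_l \tilde A^l_{({m_{i_1,i_2}}-s)}\tilde B^l$ is the zero element of $\gW^*_{(\om_{i_1}+\om_{i_2})^*}$.

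It then remains to note that applying the VOA operation $B\mapsto B_{(-1-r)}|0\rangle$ to the zero element yields zero: this is simply linearity of the maps $a\mapsto a_{(n)}c$ in $a$, property $(0)$ of Definition~\ref{graded_vertex_algebra}. Therefore every coefficient of $\sum_l(\partial^{s-1}A^l(z))B^l(z)$ vanishes in $\gW^*$, which is the assertion.

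The only point requiring care—and what I expect to be the main subtlety rather than a genuine obstacle—is the bookkeeping of gradings: one must check that the identity of Corollary~\ref{AB} is taking place coherently in the single graded piece $\gW^*_{(\om_{i_1}+\om_{i_2})^*}$ (equivalently, inside $G_{\om_{i_1}+\om_{i_2}}/(G_{\om_{i_1}+\om_{i_2}}\cap F^\dag_{<\om_{i_1}+\om_{i_2}})$), so that "lies in $G_{<(\om_{i_1}+\om_{i_2})}$" really is the same as "vanishes in $\gW^*$." This follows from Corollary~\ref{GF} and Proposition~\ref{Fdagsubquotient}, and the multiplication formula~\eqref{multiplication} guarantees that $(\partial^{s-1}A^l(z))B^l(z)$ is computed via the same expression $\eM$ whose coefficients lie in $G_{\om_{i_1}+\om_{i_2}}$ by the Lemma following~\eqref{Mcoef}. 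Once this is in place the argument is immediate.
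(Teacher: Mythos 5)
Your argument is correct and is essentially the paper's own proof, which simply cites Corollary~\ref{AB}: you apply that corollary to each summand, use linearity of $a\mapsto a_{(-1-r)}|0\rangle$, and observe that the hypothesis says the relevant element vanishes in the graded piece $G_{\om_{i_1}+\om_{i_2}}/(G_{\om_{i_1}+\om_{i_2}}\cap F^\dag_{<\om_{i_1}+\om_{i_2}})\simeq\gW^*_{(\om_{i_1}+\om_{i_2})^*}$. The grading bookkeeping you flag is handled exactly as you describe, via Corollary~\ref{GF} and Proposition~\ref{Fdagsubquotient}.
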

\begin{proof}
Follows from Corollary \ref{AB}.
\end{proof}
\begin{conj}
The relations on $\gW^*$ from Theorem \ref{descr} are defining.
\end{conj}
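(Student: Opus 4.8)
The plan is to prove the conjecture as follows. Let $S=\bigoplus_{i=1}^{r}\gW^*_{\om^*_i}$ be the direct sum of the fundamental dual global Weyl modules, viewed as a $P_+$-graded vector space with $\gW^*_{\om^*_i}$ placed in degree $\om_i$, and let $R$ be the quotient of the free graded-commutative algebra $\mathrm{Sym}(S)$ by the relations of Theorem~\ref{descr}. Since $\gW^*$ is commutative, is generated by $S$, and satisfies those relations, there is a surjection $\pi\colon R\twoheadrightarrow\gW^*$ respecting both the $P_+$-grading and the $q$-grading, and the conjecture says $\pi$ is an isomorphism. As both sides have finite-dimensional components and $\pi$ is surjective, it suffices to establish the character inequality $\ch R_\la\le\ch\gW^*_{\la^*}$ for every $\la\in P_+$; the right-hand side is explicit from $\ch\gW_\la=\ch W_\la\prod_{i=1}^{r}(q)^{-1}_{(\la,\al^\vee_i)}$. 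Thus the whole problem becomes: produce an economical spanning set of each $R_\la$ and count it.

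The quadratic relations should function as a straightening law. Fix a total order on $\{1,\dots,r\}$; then each $R_\la$ is spanned by ordered monomials in the generators $W^*_{\om^*_i}[t^{-1}]\simeq\gW^*_{\om^*_i}$ in which the index $i$ occurs $(\la,\al^\vee_i)$ times. By Corollary~\ref{AB}, $\partial^{s-1}_zA(z)\,B(z)=\sum_{r\ge 0}z^r(A_{(m_{i_1,i_2}-s)}B)_{(-1-r)}|0\rangle$, and whenever $\sum_l\tilde A^l_{(m_{i_1,i_2}-s)}\tilde B^l\in G_{<(\om_{i_1}+\om_{i_2})}$ the corresponding product of two adjacent generators collapses onto terms of strictly lower $G_\bullet=F^\dag_\bullet$ filtration degree; these collapses are exactly the relations of Theorem~\ref{descr}. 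I would then (i) organize these collapses into a confluent rewriting procedure that brings every ordered monomial to a distinguished family of ``standard monomials'' in $R_\la$, and (ii) prove that the standard monomials remain linearly independent in $\gW^*_{\la^*}$ by matching their generating function with $\ch\gW^*_{\la^*}$ computed from the product formula above. An auxiliary ingredient in (i) is the $s=2$ case of the conjecture on the associated-graded level: the identification, as $\fg[t]$-modules, of $W^*_{\om^*_{i_1}}[t^{-1}]\otimes W^*_{\om^*_{i_2}}[t^{-1}]$ modulo the span of the collapsing combinations with $\gW^*_{(\om_{i_1}+\om_{i_2})^*}$.

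\emph{The main obstacle} is step (i): proving that the relations of Theorem~\ref{descr} already generate a \emph{complete} (confluent) rewriting system — equivalently, that $\ch R_\la\le\ch\gW^*_{\la^*}$, equivalently that the Drinfeld--Pl\"ucker embedding of $\Q$ is scheme-theoretically cut out by quadrics. This is not known uniformly over all simply-laced $\fg$, and a direct count appears to require an explicit combinatorial model of the fundamental local Weyl modules $W^*_{\om_i}$ together with a description of the collapsing combinations (or, alternatively, normality and Koszulness of $\Q$). In type $A$, however, $W^*_{\om_i}\simeq V^*_{\om_i}$ is a single finite-dimensional $\fg$-module, and the relations of Theorem~\ref{descr} can be written out explicitly from the lattice formula \eqref{VO}; one then identifies them with the semi-infinite Pl\"ucker relations of \cite{FM2}, which are known to be defining in that case, so $\ch R_\la=\ch\gW^*_{\la^*}$ and $\pi$ is an isomorphism. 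This is carried out in Section~\ref{typeA}; for general $\fg$ the statement remains a conjecture.
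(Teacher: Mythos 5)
Your assessment is accurate: the statement is a conjecture, the paper proves it only in type $A$, and your type-$A$ argument follows essentially the same route as Section \ref{typeA}. One point of detail worth flagging: the paper does not literally write out the relations of Theorem \ref{descr} from the lattice formula and match them term-by-term with the semi-infinite Pl\"ucker relations of \cite{FM2}. It argues in the opposite direction: by Corollary 3.22 and Proposition 2.9 of \cite{FM2} the ideal is quadratically generated with an explicit generating set $R(I,J)$ indexed by non-semi-standard two-column tableaux, and the paper shows each $R(I,J)$ lies in the span of the Theorem \ref{descr} relations. The mechanism is the $q$-graded decomposition $\ch W_{\om_i+\om_j}=\sum_l q^l\ch V_{\om_{j-l}+\om_{i+l}}$ (Lemma \ref{Wmdec}), which forces $V^*_{\om_{i+l}+\om_{j-l}}\subset\ker\varphi_l$ for the relevant $l$, combined with the explicit tableaux of Lemma \ref{T} exhibiting $R(I,J)$ as a highest weight vector of that summand; $\fg$-equivariance of both families of relations then finishes the identification. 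Your reduction for general $\fg$ (a surjection $\pi\colon R\twoheadrightarrow\gW^*$ plus the character inequality $\ch R_\la\le\ch\gW^*_{\la^*}$) is sound as a strategy, but, as you correctly say, that inequality is precisely what is unknown outside type $A$ --- even quadratic generation of the ideal is open there --- so the general statement remains a conjecture in the paper as well.
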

We prove this conjecture in type $A$ in the section below.

\section{Type A}\label{typeA}
In this section we consider the case $\fg=\msl_{r+1}$. The whole picture simplifies a lot because
the local Weyl modules $W_{\om_i}$ are irreducible as $\fg$ modules (i.e.  $W_{\om_i}\simeq V_{\om_i}$).

Let $\Lambda_1,\dots,\Lambda_r$ be the level one integrable weights of $\gh$ such that the restriction of $\Lambda_i$
to the Cartan subalgebra of $\fg$ is equal to $\omega_i$. Let $v_{\Lambda_i}\in L(\Lambda_i)$ be the highest weight vector.
\begin{lem}
For each $i=1,\dots,r$ the lift $\widetilde W^*_{\om_i}$ is unique. One has
\[
\widetilde W^*_{\om_i}=\U(\fg)v_{\Lambda_i}\subset L(\Lambda_i).
\]
\end{lem}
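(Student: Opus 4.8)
The plan is to identify the lift $\widetilde W^*_{\om_i}$ explicitly as $\U(\fg)v_{\Lambda_i}$ and to show that no other lift is available. Recall that $\widetilde W^*_{\om_i}\subset F^\dag_{\om_i}$ is a $\fg$-submodule (not necessarily $\fg[t]$-submodule) mapping isomorphically onto $W^*_{\om_i}\subset ({\rm gr}F^\dag)_{\om_i}\simeq \gW^*_{\om^*_i}$ under the projection $F^\dag_{\om_i}\to F^\dag_{\om_i}/F^\dag_{<\om_i}$. In type $A$ we have $W^*_{\om_i}\simeq V^*_{\om_i}\simeq V_{\om_i^*}$, an irreducible $\fg$-module; and since $\{\la\in P_+\mid \la<\om_i\}=\emptyset$, the remark in the excerpt gives $F^\dag_{<\om_i}=0$, so in fact $F^\dag_{\om_i}\subset L(\Lambda_i)$ and ${\rm gr}$ is vacuous: $F^\dag_{\om_i}\simeq \gW^*_{\om_i^*}= W^*_{\om_i^*}[t^{-1}]$ already as $\fg[t]$-modules, with no associated-graded quotient to take.

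First I would pin down the $\fg$-isotypic structure of $F^\dag_{\om_i}$. By Proposition \ref{globalweylconstruction} and the identification $F^\dag_{\om_i}\simeq W^*_{\om_i^*}[t^{-1}]$, the character is $\ch_q F^\dag_{\om_i}=(1-q^{-1})^{-1}\ch W^*_{\om_i^*}=(1-q^{-1})^{-1}\ch V_{\om_i}$ (using $W_{\om_i}\simeq V_{\om_i}$ and $V^*_{\om_i^*}\simeq V_{\om_i}$). In particular, the $q^0$-graded piece — equivalently the top of the integrable module $L(\Lambda_i)$, which is $\U(\fg)v_{\Lambda_i}\simeq V_{\om_i}$ — is precisely one copy of the $\fg$-type $V_{\om_i}$, and every other graded piece lies in strictly negative $t$-degree. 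Now any lift $\widetilde W^*_{\om_i}$ is a $\fg$-submodule of $F^\dag_{\om_i}$ isomorphic to $V_{\om_i}$ and projecting isomorphically to the top; but the only graded component carrying the $\fg$-type $V_{\om_i}$ with a nonzero image in the top degree is the degree-zero component itself, which is exactly $\U(\fg)v_{\Lambda_i}$. (Here the relevant point is that the highest-weight vector of any lift must, after projection, be a nonzero multiple of the class of $v_{\Lambda_i}$, hence has $t$-degree $0$, forcing the whole $\fg$-submodule it generates to sit in degree $0$.) Therefore $\widetilde W^*_{\om_i}=\U(\fg)v_{\Lambda_i}$, and uniqueness follows.

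I expect the main obstacle to be the bookkeeping with dualization and the shift $\om_i\leftrightarrow\om_i^*$: one must be careful that $({\rm gr}F^\dag)_{\om_i}\simeq \gW^*_{\om_i^*}$ (Proposition \ref{Fdagsubquotient}), that $W_{\om_i}\simeq V_{\om_i}$ hence $W^*_{\om_i}\simeq V_{\om_i^*}$, and that the globalization $[t^{-1}]$ places all higher $\fg$-isotypic multiplicities in negative $t$-degrees, so that the degree-$0$ part is the \emph{only} place a copy of the correct $\fg$-type sits with nonzero top image. Once the character computation and the grading argument are in place, the identification with $\U(\fg)v_{\Lambda_i}$ and the uniqueness statement are immediate, since $\U(\fg)v_{\Lambda_i}$ is manifestly the irreducible $\fg$-submodule generated by the highest weight vector of $L(\Lambda_i)$ and it has the right highest weight.
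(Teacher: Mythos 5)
Your proof is correct and rests on exactly the same observation as the paper's one-line argument: in type $A$ there is no dominant weight strictly below $\om_i$, so $F^\dag_{<\om_i}=0$. Once that is in place, the quotient map $F^\dag_{\om_i}\to F^\dag_{\om_i}/F^\dag_{<\om_i}$ is the identity, so a lift of the subspace $W^*_{\om_i^*}\subset\gW^*_{\om_i^*}\simeq F^\dag_{\om_i}$ is forced to be that subspace itself, namely the $t$-degree-zero part $\U(\fg)v_{\Lambda_i}$; your first paragraph already contains this complete argument. The second paragraph's analysis of graded $\fg$-isotypic components is therefore redundant, and one inference in it is not valid as stated: a vector whose projection to the degree-zero graded component is a nonzero multiple of $v_{\Lambda_i}$ need \emph{not} have pure $t$-degree zero. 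Indeed, any ``diagonal'' copy $\{v+\phi(v): v\in V_{\om_i}\T t^0\}$, for a nonzero $\fg$-morphism $\phi:V_{\om_i}\T t^0\to V_{\om_i}\T t^{-1}$, is a $\fg$-submodule isomorphic to $V_{\om_i}$ projecting isomorphically onto the top graded piece, so uniqueness cannot be extracted from that weaker property alone; it comes only from the vanishing of $F^\dag_{<\om_i}$, which you correctly establish at the outset.
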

\begin{proof}
In type $A$ the inequality $\la\le \om_i$ implies $\la=\om_i$ for $\la\in P_+$.
\end{proof}

\begin{cor}
The space $F^\dag_{\om_i}=G_{\om_i}$ is isomorphic to $\gW^*_{\om_i^*}$ and is explicitly given by
\[
{\rm{span}}\{ A_{(n)}|0\ket, A\in W^*_{\om_i^*}, n\in\bZ\}.
\]
\end{cor}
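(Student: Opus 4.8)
The plan is to assemble statements already established. The equality $F^\dag_{\om_i}=G_{\om_i}$ is Corollary \ref{GF} applied to $\la=\om_i$, and the isomorphism $F^\dag_{\om_i}\simeq\gW^*_{\om_i^*}$ comes from Proposition \ref{Fdagsubquotient}: it gives $F^\dag_{\om_i}/\sum_{\nu<\om_i}F^\dag_\nu\simeq\gW^*_{\om_i^*}$, and the feature special to type $A$ — recorded in the Lemma just above — is that $\la\le\om_i$ forces $\la=\om_i$ for $\la\in P_+$, so $\sum_{\nu<\om_i}F^\dag_\nu=0$ and the quotient is all of $F^\dag_{\om_i}$. (Equivalently one may cite the Remark in Section \ref{strategy}, where this identification is already noted.)

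For the explicit description I would unwind the inductive definition \eqref{Gdef} at $\la=\om_i$. The summand $\sum_{\mu<\om_i}G_\mu$ vanishes by the same observation, and in the second sum the only index $j$ with $\om_i-\om_j\in P_+$ is $j=i$, which contributes $G_{\om_i-\om_i}=G_0=\bC|0\ket$. Hence $G_{\om_i}={\rm span}\{B_{(k)}|0\ket : B\in\widetilde W^*_{\om_i^*},\ k\in\tfrac1N\bZ\}$. By the preceding Lemma this lift is unique and is realized explicitly as $\U(\fg)$ applied to the relevant highest weight vector of a level one module; since $F^\dag_{<\om_i}=0$ there is nothing to quotient out, so $\widetilde W^*_{\om_i^*}$ is the whole space $W^*_{\om_i^*}$ and we may replace $B\in\widetilde W^*_{\om_i^*}$ by $A\in W^*_{\om_i^*}$.

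Finally I would cut the mode range $k\in\tfrac1N\bZ$ down to $k\in\bZ$ by a parity argument: each $A\in W^*_{\om_i^*}$ is homogeneous of parity $g=p(\om_i^*)\in P/Q$, the vacuum lies in $\bL_0$, and by the homogeneity clause of Definition \ref{graded_vertex_algebra}$(0)$ the coefficient $A_{(k)}|0\ket$ vanishes unless $k\in\Delta(g,0)$; taking $0\in P$ as a representative of the neutral class gives $\Delta(g,0)=\bZ$, so only integer modes survive. Combining the three pieces yields the Corollary. None of this is real work — it is bookkeeping — and the only point that needs a little attention is notational: tracking the duality $\om_i\leftrightarrow\om_i^*$, identifying the level one module in which $\widetilde W^*_{\om_i^*}$ sits, and confirming canonicity of the lift; but as $\om_i^*$ is again a fundamental weight and $W_{\om_i^*}\simeq V_{\om_i^*}$ is irreducible, this causes no difficulty.
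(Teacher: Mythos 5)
Your proof is correct and is exactly the argument the paper leaves implicit: the corollary is stated there without proof as an immediate consequence of the preceding Lemma, and your assembly of Corollary \ref{GF}, Proposition \ref{Fdagsubquotient} together with the type-$A$ vanishing of $F^\dag_{<\om_i}$, the unwinding of the inductive definition \eqref{Gdef} at $\la=\om_i$ (where only $j=i$ survives and $G_0=\bC|0\ket$), and the parity computation $\Delta(p(\om_i^*),0)=\bZ$ to cut the modes from $\tfrac1N\bZ$ down to $\bZ$ is precisely the intended bookkeeping. One small caveat, inherited from the paper's own Lemma rather than introduced by you: under the order \eqref{order} with $\bQ_{\ge 0}$-coefficients one actually has $0<\om_i$, so strictly speaking $F^\dag_{<\om_i}$ contains $F^\dag_0=\bC|0\ket$; the conclusion nevertheless stands because $F^\dag_0\subset L(\Lambda_0)$ while $F^\dag_{\om_i}\subset L(\Lambda_i)$ with $\om_i\not\equiv 0 \pmod Q$ in type $A$, so the two pieces live in different summands of $\bL$ and do not interfere with the quotient.
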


Let us now explicitly describe the relations for the algebra $\gW^*$.
In \cite{FM2} for any two strictly increasing columns $I$ and $J$ such that length of $I$ is no smaller than the length of
$J$  we defined the number $k(I,J)$. The value $k(I,J)$ measures how far the two-column Young tableaux $Y(I,J)$ corresponding to
the pair $I,J$ is from being semi-standard. In particular, $k(I,J)=0$ if and only if the corresponding tableaux is semi-standard.
Let us recall the formal definition.

Let $I,J\subset \{1,\dots,r+1\}$, $I=(i_1<\dots <i_{l(I)})$, $J=(j_1<\dots <j_{l(J)})$. We assume that either $l(I)>l(J)$, or
$l(I)=l(J)$ and there exists $1\le s\le l(I)$ such that
\[
i_{l(I)}=j_{l(I)}, \dots, i_{s+1}=j_{s+1}, i_s<j_s.
\]
We construct a set $P=(p_1,\dots,p_M)\subset I\sqcup J$ step by step decreasing the index. We start with
fixing $p_M=i_{l(I)}$. Now at each step
$p_k$ is identified with an element $i_a\in I$ or $j_b\in J$. We proceed via the following rule:
\begin{itemize}
\item if $p_k=i_a$ and $i_a>j_a$, then $p_{k-1}=j_a$; otherwise $p_{k-1}=i_{a-1}$,
\item if $p_k=j_a$ and $i_a<j_a$, then $p_{k-1}=i_a$; otherwise $p_{k-1}=j_{a-1}$.
\end{itemize}

\begin{dfn}
$k(I,J)=|P(I,J)|-l(I)$.
\end{dfn}

Let $Y(I,J)$ be the two-column Young diagram whose left (longer) column is $I$ and the right column is $J$.

\begin{lem}
The following properties hold by construction:
\begin{itemize}
\item $Y(I,J)$ is semi-standard if and only if $k(I,J)=0$,
\item $p_1<p_2<\dots <p_M$,
\item $0\le k(I,J)\le \min(l(J),r+1-l(I))$.
\end{itemize}
\end{lem}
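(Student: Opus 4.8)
The plan is to unwind the combinatorial construction of the set $P=P(I,J)=(p_1,\dots,p_M)$ step by step, reading off each claimed property directly from the recursive rule that defines the $p_k$. Before doing so I would first make explicit the obvious inductive invariant of the construction: at the moment we have just placed $p_k$, it is identified with some $i_a\in I$ or some $j_b\in J$, and in both cases the index ($a$ or $b$) only decreases as $k$ decreases, while the \emph{value} $p_k$ also strictly decreases. The key observation is that whenever $p_k=i_a$ we have $i_a\ge j_a$ (otherwise we would have come from $j_a$), and whenever $p_k=j_a$ we have $j_a\ge i_a$; this ``no-crossing'' invariant is exactly what makes the two cases of the rule coherent. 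I would record this as the central lemma of the argument and prove it by downward induction on $k$, checking the four transitions listed in the definition.

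For the three bullet points: the middle one, $p_1<p_2<\dots<p_M$, follows immediately from the invariant above, since in every transition $p_{k-1}$ is strictly smaller than $p_k$ — if $p_k=i_a$ then $p_{k-1}$ is either $j_a<i_a$ (when $i_a>j_a$) or $i_{a-1}<i_a$; symmetrically when $p_k=j_a$. For the first bullet, I would argue that $Y(I,J)$ is semi-standard precisely when for all admissible rows one has $i_m< j_m$ (strictly, since columns are increasing and rows weakly increasing means $i_m\le j_m$, with the tableau standard as a filling of a Young diagram forcing strictness across a column-strict two-column shape) — then trace the construction: if every comparison $i_a$ versus $j_a$ encountered is of the ``good'' type, the recursion alternately steps down the two columns in lock-step, producing exactly $p_k$'s that exhaust $I$ (one per row of the longer column) together with the bottom portion of $J$ that lies below the point where $i_s<j_s$ first fails; conversely each ``bad'' comparison $i_a\ge j_a$ forces the recursion to pick up an extra element, so $|P|>l(I)$, i.e. $k(I,J)>0$. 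This requires bookkeeping the two entry conditions on $(I,J)$ (either $l(I)>l(J)$, or equal lengths with a prescribed tail agreement) but is otherwise a direct case analysis.

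For the third bullet, $0\le k(I,J)\le\min(l(J),r+1-l(I))$: non-negativity is clear once we show the construction always produces at least $l(I)$ elements, which follows because the $I$-index $a$ is visited for every value $l(I),l(I)-1,\dots,1$ before the recursion can terminate (it terminates only when an index hits $0$). For the upper bound I would count the $j$-entries among the $p_k$: each time the recursion ``crosses'' from $I$ to $J$ it does so at distinct rows $a$, so the number of $J$-entries is at most $l(J)$, giving $k(I,J)=|P|-l(I)\le l(J)$; and since all $p_k$ are distinct elements of $\{1,\dots,r+1\}$, while $l(I)$ of them may be thought of as ``used up'' by the $I$-column, we get $|P|\le r+1$, hence $k(I,J)\le r+1-l(I)$.

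The main obstacle I anticipate is the first bullet — the clean equivalence ``$Y(I,J)$ semi-standard $\iff k(I,J)=0$'' — because it is the only place where one must reconcile the purely recursive definition of $P$ with the global combinatorial notion of semi-standardness, and in particular handle the boundary cases (equal column lengths with tail agreement, $l(J)$ strictly less than $l(I)$, and the termination of the recursion when an index reaches $0$ in one column but not the other). Everything else is a mechanical consequence of the no-crossing invariant. I would therefore devote most of the write-up to a careful statement and proof of that invariant, after which all three properties drop out with short arguments.
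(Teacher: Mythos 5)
Your route is the same as the paper's (which disposes of all three bullets in a few lines: monotonicity is read off from the four cases of the recursion, $k(I,J)=0$ iff $P(I,J)=I$ iff $i_a\le j_a$ for all $a$, and the upper bounds follow from $P\subset I\sqcup J$ together with the $p_k$ being distinct elements of $\{1,\dots,r+1\}$). Your treatment of the second bullet and of both upper bounds in the third is correct and matches the paper. But two of your supporting claims are false, and one of them sits under a load-bearing step.

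First, the ``no-crossing invariant'' (whenever $p_k=i_a$ one has $i_a\ge j_a$, and symmetrically) is simply not true: for $I=(1,2)$, $J=(3,4)$ one has $p_M=i_2$ with $i_2<j_2$ and $p_{M-1}=i_1$ with $i_1<j_1$, neither reached from the $J$-column. Fortunately you never really use it: your monotonicity argument is the direct four-case check, which stands on its own. Second, semi-standardness for a two-column tableau means $i_a\le j_a$ (rows weakly increase), and the recursion itself branches on $i_a>j_a$ versus $i_a\le j_a$, so equality is a ``good'' comparison; your insistence on strict inequality would make the first bullet false as you argue it. For $I=(1,2)$, $J=(1,3)$ the recursion never leaves $I$, so $k(I,J)=0$, yet the first row is $(1,1)$, which your criterion would declare non-semi-standard. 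Third, your justification of $k(I,J)\ge 0$ --- that the $I$-index is visited for every value $l(I),\dots,1$ --- is false: for $I=(2,4,5)$, $J=(1,3,6)$ the recursion runs $i_3=5,\ i_2=4,\ j_2=3,\ j_1=1$ and stops, never visiting $i_1=2$. The set $P$ can skip elements of $I$ and can even terminate inside the $J$-column, so $|P|\ge l(I)$ genuinely needs an argument. The correct one is an excursion count: each entry into the $J$-column at row $a$ that exits at row $b$ (or falls off the bottom) collects $j_a,\dots,j_b$ (resp.\ $j_a,\dots,j_1$) while skipping only $i_{a-1},\dots,i_{b+1}$ (resp.\ $i_{a-1},\dots,i_1$), i.e.\ at least one more $j$ than the number of skipped $i$'s; hence $|P|\ge l(I)$, with equality exactly when there are no excursions, which also yields the first bullet cleanly (with the weak inequality).
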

\begin{proof}
To prove the first claim we note that $k(I,J)=0$ is equivalent to $P(I,J)=I$, which means that $i_a\le j_a$ for all $a$.
The second claim immediately follows from the definition. Since all the numbers in $P$ are distinct and belong to $\{1,\dots,r+1\}$,
we conclude $k(I,J)+l(I)\le r+1$. The inequality $k(I,J)\le l(J)$ is obvious, since it is equivalent to $|P(I,J)|\le l(I)+l(J)$.
\end{proof}

In what follows we use the standard notation $\om_i=\sum_{a=1}^i \varepsilon_a$.
For a Young tableau $T$ the weight of $T$ is the linear combination $\sum_{a=1}^{r+1} m_a\varepsilon_a$, where $m_a$ is the
number of times $a$ shows up in $T$.

\begin{lem}\label{T}
Let us fix $i\ge j$. Then for any
$0\le l\le \min(j,r+1-i)$ there exists a two column Young tableau $T=(C,D)$ of shape $\omega_i+\omega_j$ with
the left column $C=(c_1<\dots <c_i)$ and right column $D=(d_1<\dots <d_j)$ such that $k(C,D)=l$ and the weight of
$T$ is equal to $\omega_{i+l}+\omega_{j-l}$.
\end{lem}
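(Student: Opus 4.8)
The plan is to construct the tableau $T=(C,D)$ explicitly and then verify the three required properties (shape, the value $k(C,D)=l$, and the weight) directly from the combinatorial definition of $k(I,J)$. Since $i\ge j$, the left column has length $i$ and the right column has length $j$, so we are looking for strictly increasing columns $C=(c_1<\dots<c_i)$ and $D=(d_1<\dots<d_j)$ inside $\{1,\dots,r+1\}$. The natural first attempt is to take the tableau that is "as semistandard as possible except shifted at the top $l$ boxes of the right column": for instance set $C=(1,2,\dots,i)$ and let $D$ consist of the entries $(1,2,\dots,j-l)$ in its bottom $j-l$ rows and the entries $(i+1,i+2,\dots,i+l)$ in its top $l$ rows, i.e. $D=(i+1,\dots,i+l,1,\dots,j-l)$ reordered increasingly, which is $D=(1,\dots,j-l,i+1,\dots,i+l)$. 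The constraint $l\le\min(j,r+1-i)$ is exactly what is needed for $D$ to be a well-defined strictly increasing subset of $\{1,\dots,r+1\}$: $l\le j$ guarantees $j-l\ge 0$ so the bottom part makes sense, and $l\le r+1-i$ guarantees $i+l\le r+1$ so the top part fits.

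Next I would compute the weight. Column $C=(1,\dots,i)$ contributes $\varepsilon_1+\dots+\varepsilon_i=\om_i$. Column $D$ contributes $\varepsilon_1+\dots+\varepsilon_{j-l}+\varepsilon_{i+1}+\dots+\varepsilon_{i+l}$. The total is $2(\varepsilon_1+\dots+\varepsilon_{j-l})+(\varepsilon_{j-l+1}+\dots+\varepsilon_i)+(\varepsilon_{i+1}+\dots+\varepsilon_{i+l})=(\varepsilon_1+\dots+\varepsilon_{i+l})+(\varepsilon_1+\dots+\varepsilon_{j-l})=\om_{i+l}+\om_{j-l}$, using the standard identity $\om_k=\sum_{a=1}^k\varepsilon_a$. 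So the weight condition holds automatically for this choice.

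The main obstacle will be verifying $k(C,D)=l$, i.e. running the path construction $P=(p_1<\dots<p_M)$ for the pair $(C,D)$ and checking that $|P|=i+l$. I would trace through the recursion starting from $p_M=c_i=i$. For the bottom $j-l$ rows we have $c_a=d_a=a$ (for $a\le j-l$), which are the "equal" positions; for rows $j-l<a\le j$ we have $c_a=a<i+(a-j+l)=d_a$ wait — one must be careful: $d_a$ for $a>j-l$ is $i+(a-(j-l))$, and since $a\le j$ this is at most $i+l$, and indeed $c_a=a\le j\le i<i+(a-j+l)=d_a$, so these are strictly-less positions $c_a<d_a$. Following the rules: whenever $p_k=c_a$ with $c_a<d_a$ — no, the rule for $p_k=i_a\in I$ says if $i_a>j_a$ go to $j_a$, else go to $i_{a-1}$; and for $p_k=j_a\in J$, if $i_a<j_a$ go to $i_a$, else go to $j_{a-1}$. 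So starting at $p_M=c_i$, since $i>j$ there is no $j_i$, and we walk down $c_i,c_{i-1},\dots$ until we reach the rows where the right column is active; at row $a$ with $c_a<d_a$ we never jump from $C$ into $D$, but when the path is on $d_a$ we do jump to $c_a$. The upshot I expect is that the path picks up all of $C$ (that is $i$ elements) together with exactly the $l$ extra top entries $i+1,\dots,i+l$ of $D$, for a total of $i+l$, giving $k(C,D)=|P|-l(C)=(i+l)-i=l$. I would present this by carefully describing one full pass of the recursion and observing which elements of $C\sqcup D$ get visited; the bookkeeping is routine once the indices are set up, and the strict inequalities $c_a\le d_a$ throughout (with equality exactly in the bottom $j-l$ rows and the entries interleaving correctly) are what make the path behave as claimed. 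Finally I would invoke the earlier lemma's bound $0\le k(I,J)\le\min(l(J),r+1-l(I))$ as a consistency check that the range of $l$ is sharp.
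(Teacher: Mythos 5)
Your weight computation is fine, but the construction itself is wrong: the tableau $C=(1,2,\dots,i)$, $D=(1,\dots,j-l,\,i+1,\dots,i+l)$ has $k(C,D)=0$ for every $l$, not $k(C,D)=l$. Indeed $c_a=a$ and $d_a\ge a$ automatically (any strictly increasing sequence in $\{1,\dots,r+1\}$ satisfies $d_a\ge a$), so $c_a\le d_a$ in every row, i.e.\ your tableau is semi-standard — and semi-standardness is equivalent to $k=0$ by the first property listed right after the definition of $k(I,J)$. Concretely, trace the path: it starts at $p_M=c_i$, and the only rule that ever moves the path from the left column into the right column is ``if $p_k=i_a$ and $i_a>j_a$ then $p_{k-1}=j_a$''. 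Since $i_a>j_a$ never holds for your tableau, the path never leaves $C$, so $P=C$, $|P|=i$, and $k(C,D)=0$. Your own narration betrays the problem: you say ``when the path is on $d_a$ we do jump to $c_a$,'' but the path never gets onto any $d_a$ in the first place. So for $l>0$ the key property of the lemma fails.

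The fix requires a genuinely different tableau, which is what the paper builds: to force $|P|=i+l$ you need exactly $l$ rows where $c_a>d_a$, arranged so that the path makes exactly $l$ jumps into $D$ and then returns. The paper achieves this by shifting the lower part of the left column up by $l$ (so $c_a=a+l$ for $a\ge j$), filling rows $j-l+1,\dots,j$ with an interleaved staircase that alternates between the two columns (producing the $l$ violations $c_a>d_a$), and keeping the top $j-l$ rows equal ($c_a=d_a=a$). For example, with $i=10$, $j=6$, $l=2$ one gets $C=(1,2,3,4,5,8,9,10,11,12)$, $D=(1,2,3,4,6,7)$; the path is $12,11,10,9,8\to 7\to 6\to 5,4,3,2,1$, which has $12=i+l$ elements. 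Your choice of $D$ happens to give the right weight, but the weight alone does not pin down the tableau, and the one you chose is exactly the semi-standard representative rather than the level-$l$ one.
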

\begin{proof}
We need to fill $T=(C,D)$ with the numbers $1,\dots,r+1$ in such a way that each of the numbers $1,\dots,j-l$ shows up twice and
each of the numbers $j-l+1,\dots,i+l$ shows up once. Here is the rule: we consider the numbers $1,\dots, i+l$ and start
inserting these numbers into $T$ moving from  bottom to top in the following way: we first define
\[
c_i=i+l, c_{i-1}=i+l-1,\dots, c_j=j+l.
\]
Then we turn right, making a horizontal move, i.e. define $d_j=j+l-1$ and then go up, setting $d_{j-1}=j+l-2$. Then we
make a horizontal move by $c_{j-1}=j+l-3$ and a vertical step $c_{j-2}=j+l-4$ etc. until we make exactly $l$ horizontal moves.
Then we get $c_{j-l+1}=j-l+1$ or $d_{j-l+1}=j-l+1$. We finalize the procedure by setting
\[
c_{j-l-s+1}=d_{j-l-s+1}=j-l-s+1, \ s=0,\dots, j-l.
\]
\end{proof}

Here are examples of the tableaux $T$ from Lemma \ref{T} for $r=13$, $i=10$, $j=6$ with $l=0,1,2,3$:
\[
\begin{tabular}{cc}
1 & 1\\
2 & 2\\
3 & 3\\
4 & 4\\
5 & 5\\
6 & 6\\
7 & \\
8 & \\
9 & \\
10 &
\end{tabular} \qquad \qquad
\begin{tabular}{cc}
1 & 1\\
2 & 2\\
3 & 3\\
4 & 4\\
5 & 5\\
7 & 6\\
8 & \\
9 & \\
10 & \\
11 &
\end{tabular} \qquad \qquad
\begin{tabular}{cc}
1 & 1\\
2 & 2\\
3 & 3\\
4 & 4\\
5 & 6\\
8 & 7\\
9 & \\
10 & \\
11 & \\
12 &
\end{tabular} \qquad \qquad
\begin{tabular}{cc}
1 & 1\\
2 & 2\\
3 & 3\\
5 & 4\\
6 & 7\\
9 & 8\\
10 & \\
11 & \\
12 & \\
13 &
\end{tabular}
\]

\begin{rem}
The proof of Lemma \ref{Wmdec} below implies that the tableau $T$ constructed in the proof of Lemma \ref{T}
is unique with the desired properties.
\end{rem}

\begin{lem}\label{Wmdec}
For $1\le j\le i\le r$ the $q$-character of the local Weyl module $W_{\om_i+\om_j}$ is given by
\[
\ch W_{\om_i+\om_j} = \sum_{l=0}^{\min(j,r+1-i)}q^l\ch\ V_{\om_{j-l}+\om_{i+l}},
\]
where $\om_0=\om_{r+1}=0$.
\end{lem}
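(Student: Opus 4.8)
The plan is to compute $\ch W_{\om_i+\om_j}$ by two inequalities. First I would establish the upper bound $\ch W_{\om_i+\om_j}\le \sum_{l=0}^{\min(j,r+1-i)}q^l\ch V_{\om_{j-l}+\om_{i+l}}$ (coefficientwise in $q$ and as $\fg$-characters). This comes from the surjection of $\fg[t]$-modules $W_{\om_i}\T W_{\om_j}=W_{\om_i+\om_j}$ valid in type $A$ (since $W_{\om_k}\simeq V_{\om_k}$, this is the statement $\dim W_{\om_i+\om_j}=\prod(\dim W_{\om_k})^{(\la,\al_k^\vee)}$ together with cyclicity), combined with the defining relations of $W_{\om_i+\om_j}$: the cyclic vector is killed by $\fn\T\bC[t]$, by $\fh\T t\bC[t]$, and by $(f_{\om_i+\om_j}\T 1)^{(\om_i+\om_j,\al^\vee)+1}$. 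In the $q$-graded picture, the degree-$l$ part is spanned by vectors obtained by applying $l$ operators $xt^1$ to $V_{\om_i+\om_j}$ inside $V_{\om_i}\T V_{\om_j}$; since $\fh\T t\bC[t]$ annihilates the top and $W_{\om_i+\om_j}$ is a quotient of $V_{\om_i}\T V_{\om_j}$ as a $\fg$-module with the $t$-action lowering the Young-diagram ``column transfer'' degree, each graded piece embeds into a sum of $V_{\om_{j-l}+\om_{i+l}}$, and a highest weight vector of weight $\om_{j-l}+\om_{i+l}$ in degree $l$ is unique up to scalar — this is exactly what the tableau $T=(C,D)$ of Lemma \ref{T} with $k(C,D)=l$ records. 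I would use $k(I,J)$ to index a spanning set of extremal vectors and show no extremal vector of ``transfer degree'' $l$ with $l>\min(j,r+1-i)$ can survive, bounding the degree range.

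For the matching lower bound I would exhibit, for each $l$ in range, a nonzero highest weight vector of weight $\om_{j-l}+\om_{i+l}$ in the degree-$l$ component of $W_{\om_i+\om_j}$, thereby forcing a copy of $V_{\om_{j-l}+\om_{i+l}}$ there. The natural candidate is the image in $W_{\om_i+\om_j}=V_{\om_i}\T V_{\om_j}/(\text{relations})$ of a suitable ``column-transferred'' vector built from the tableau $T$ of Lemma \ref{T}: starting from $v_{\om_i}\T v_{\om_j}$, apply the stream of operators $f_\al t^1$ (in the Chevalley-basis order dictated by the horizontal/vertical moves in the construction of $T$) exactly $l$ times to move $l$ boxes from the right column up into positions forming column $\om_{i+l}$. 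I would check this vector is (i) of the claimed weight, (ii) killed by $\fn$ (highest weight in $W_{\om_i+\om_j}$), using that moving past the leftmost semi-standard configuration is exactly what $k(C,D)>0$ detects, and (iii) nonzero in the quotient, which is where the explicit semi-infinite structure enters — here I would invoke Theorem \ref{descr} / Corollary \ref{AB}: the vertex-operator product $\partial^{s-1}_zA(z)\,B(z)$ with $s=l+1$, $A=e^{\om_i^*},B=e^{\om_j^*}$ produces precisely a nonzero element in the $\om^*_{i+l}+\om^*_{j-l}$ graded piece of $\gW^*$, whose dualization and restriction to the local Weyl module gives nonvanishing in the degree-$l$ part of $W_{\om_i+\om_j}$.

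Combining the two bounds gives equality, with the $q^l$ weighting being automatic since the $l$-th vector sits in $q$-degree $l$. The main obstacle I anticipate is the lower bound, specifically the nonvanishing in step (iii): it is easy to write a candidate highest weight vector, but proving it does not get killed in passing to the quotient $W_{\om_i+\om_j}$ requires a genuine input. The cleanest route is to borrow the vertex-algebra computation already set up in Section \ref{VOAPl} — Proposition \ref{VertexMult} and Corollary \ref{AB} show that the relations produced by $(A_{(m_{i_1,i_2}-s)}B)_{(-1-r)}|0\ket$ are controlled, and in type $A$ (where $\widetilde W^*_{\om_i}=\U(\fg)v_{\Lambda_i}$ is unique) one checks directly from formula \eqref{VO} that the extremal vectors of weight $\om^*_{i+l}+\om^*_{j-l}$ do survive for $l\le\min(j,r+1-i)$ and vanish otherwise, because the power of $(z-w)$ appearing is exactly $m_{i,j}=-(\om_i^*,\om_j^*)=\min(i,j)-\tfrac{ij}{r+1}$-type data forcing the degree cutoff. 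Alternatively, if one prefers to stay purely in current-algebra terms, the nonvanishing can be extracted from the known fusion-product / Demazure-character description of $W_{\om_i+\om_j}$ in type $A$, but invoking the vertex-operator formula is the argument consistent with this paper.
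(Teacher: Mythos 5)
Your overall reduction is sound: since $W_{\om_i+\om_j}\simeq V_{\om_i}\T V_{\om_j}$ as $\fg$-modules and the Littlewood--Richardson decomposition here is multiplicity free, it would indeed suffice to exhibit, for each $0\le l\le\min(j,r+1-i)$, a nonzero $\fg$-highest weight vector of weight $\om_{j-l}+\om_{i+l}$ sitting in $q$-degree exactly $l$; the ``upper bound'' then comes for free by counting constituents. The genuine gap is that you never close this decisive step, which you yourself flag as step (iii). The tool you propose does not do the job: Corollary \ref{AB} and Proposition \ref{VertexMult} applied to $A=e^{\om_i^*}$, $B=e^{\om_j^*}$ produce vectors $(A_{(m_{i,j}-s)}B)_{(-1-r)}|0\ket$ of $\fh$-weight $\om_i^*+\om_j^*$, not of weight $\om_{i+l}^*+\om_{j-l}^*$, so they cannot serve as highest weight vectors of the constituents $V^*_{\om_{i+l}+\om_{j-l}}$ (those are non-extremal combinations $\sum_m A^m\T B^m$). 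Moreover, deciding whether such a candidate survives in ${\rm gr}\,G_\bullet\simeq\gW^*$ amounts to knowing the graded character of $\gW^*_{\om_i^*+\om_j^*}$, which via $\ch\gW_\la=\ch W_\la\prod_i(q)^{-1}_{(\la,\al_i^\vee)}$ is equivalent to the statement being proved; and the paper's Section 6 uses Lemma \ref{Wmdec} precisely to control these kernels, so arguing from ``what survives in $\gW^*$'' risks circularity. Your ``upper bound'' paragraph (the $t$-action ``lowering the column-transfer degree'') likewise asserts rather than proves how the LR constituents distribute among $q$-degrees.

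The paper's actual proof is essentially combinatorial and avoids all of this: it quotes the explicit formula $\ch W_{\om_i+\om_j}=\sum_{|I|=i,|J|=j}q^{k(I,J)}\prod_a x_{i_a}\prod_b x_{j_b}$ from \cite{FM2} (Theorem 3.21), observes that each fixed-$k$ piece must be a sum of characters of LR constituents (each graded component being a $\fg$-submodule of $V_{\om_i}\T V_{\om_j}$), and uses Lemma \ref{T} to show that the monomial of the extreme weight $\om_{i+l}+\om_{j-l}$ occurs in the piece $k=l$; multiplicity-freeness then forces each degree-$l$ piece to equal $\ch V_{\om_{j-l}+\om_{i+l}}$. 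If you want to avoid citing that character formula, you must supply an independent construction of a nonvanishing highest weight vector in degree $l$ (e.g. through the fusion-product realization of $W_{\om_i+\om_j}$), which your proposal gestures at but does not carry out.
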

\begin{proof}
Recall that one has an isomorphism of $\fg$-modules $W_{\om_i+\om_j} = V_{\om_i}\T V_{\om_j}$.
The Littlewood-Richardson rule tells us that
\[
V_{\om_i}\T V_{\om_j}\simeq  \sum_{l=0}^{\min(j,r+1-i)} V_{\om_{j-l}+\om_{i+l}}.
\]
So we only need to show that the summand $V_{\om_{j-l}+\om_{i+l}}$ shows up at the level $l$ in
$W_{\om_i+\om_j}$. Theorem 3.21 of \cite{FM2} gives the following formula for the character of $W_{\om_i+\om_j}$
(here $x_a=\exp(\varepsilon_a)$):
\[
\ch\ W_{\om_i+\om_j}=\sum_{|I|=i, |J|=j} q^{k(I,J)}\prod_{a=1}^{l(I)} x_{i_a}\prod_{b=1}^{l(J)} x_{j_b}.
\]
We claim that for $1\le l\le \min(j,r+1-i)$
\begin{equation}\label{tensor}
\ch\ V_{\om_{j-l}+\om_{i+l}}=\sum_{\substack{|I|=i, |J|=j\\ k(I,J)=l}} \prod_{a=1}^{l(I)} x_{i_a}\prod_{b=1}^{l(J)} x_{j_b},
\end{equation}
(for example, \eqref{tensor} for $l=0$ simply means that semi-standard tableaux form a basis for the module $V_{\om_i+\om_j}$).
In fact, the right hand side of \eqref{tensor} is a character of a summand of the tensor product $V_{\om_i}\T V_{\om_j}$.
Thanks to Lemma \ref{T} the right hand side of \eqref{tensor} contains a term corresponding to the highest
weight vector of $V_{\omega_{i+l}+\om_{j-l}}$. This implies \eqref{tensor}.
\end{proof}

Recall that the algebra $\gW^*$ is generated by its fundamental part $\bigoplus_{a=1}^r \gW^*_{\om_a}$.
Proposition below claims that the vertex operator approach produces the generating set of relations in type $A$.
It is tempting to conjecture that the same result hold in all types. Our argument is based on \cite{FM2},
which treats type $A$ only. For other types it is not even known if the ideal of relations is generated by quadratic part.

\begin{prop}
The relations described in Theorem \ref{descr} generate the ideal of relations of $\gW^*$.
\end{prop}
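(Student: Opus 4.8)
The plan is to show that the quadratic relations coming from Theorem \ref{descr} cut out exactly the ideal of relations of $\gW^*$ by comparing with the explicit presentation of $\gW^*$ in type $A$ given in \cite{FM2}. First I would recall from \cite{FM2} that in type $A$ the algebra $\gW^*$ has a presentation with generators the (duals of) coordinates on $V_{\om_i}[t^{-1}]$ for $i=1,\dots,r$ and with defining relations a set of quadratic relations indexed by pairs of columns $(I,J)$ with $l(I)\ge l(J)$ together with the integer $k(I,J)$; the point is that the relation attached to $(I,J)$ has "leading term" governed by $k(I,J)$, and these are exactly the straightening relations generalising the classical Pl\"ucker relations. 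So the task reduces to matching the relations produced by Theorem \ref{descr} with this list.

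Next I would make the identification between the two families of quadratic relations precise. Fix $i\ge j$. By Lemma \ref{Wmdec} the local Weyl module $W_{\om_i+\om_j}\simeq V_{\om_i}\T V_{\om_j}$ decomposes with the summand $V_{\om_{i+l}+\om_{j-l}}$ sitting in $q$-degree $l$, and by Lemma \ref{T} (and the uniqueness remark after it) the highest weight vector of this summand is realised by an explicit tableau $T=(C,D)$ with $k(C,D)=l$. Translating through the identification $\gW^*_{\om_i^*}\simeq W^*_{\om_i^*}[t^{-1}]$ and $A(z)=\sum_{r\ge 0}z^r(A\T t^{-r})$, the hypothesis of Theorem \ref{descr} — that $\sum_l \tilde A^l_{(m_{i_1,i_2}-s)}\tilde B^l\in G_{<(\om_{i_1}+\om_{i_2})}$ — is precisely the statement that a certain combination of products $\sum_l(\partial^{s-1}A^l(z))B^l(z)$ lies below the top piece, and the shift $s$ corresponds to the level $l=k(C,D)$ at which the corresponding isotypic component appears in $W_{\om_i+\om_j}$. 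Concretely: the vanishing relations produced by Theorem \ref{descr} for a given pair $(i_1,i_2)$ and each $s>0$ are the components of $\partial^{s-1}A(z)B(z)$ that project into the summands $V_{\om_{i_1+l}+\om_{i_2-l}}$ with $l\ge s$ in $W^*_{\om_{i_1}+\om_{i_2}}[t^{-1}]$, and these match one-to-one with the quadratic relations of \cite{FM2} indexed by $(I,J)$ with $k(I,J)$ at least $s$. I would spell this correspondence out on the basis of column-tableaux, using Theorem 3.21 of \cite{FM2} for the character and the explicit highest-weight tableaux of Lemma \ref{T}.

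Having set up the dictionary, the argument is then a surjectivity/dimension count. Let $R$ be the ideal generated by the relations of Theorem \ref{descr} and let $\overline{\gW}{}^*$ be the quotient of the free algebra on $\bigoplus_a W^*_{\om_a}[t^{-1}]$ by $R$. There is a canonical surjection $\overline{\gW}{}^*\twoheadrightarrow \gW^*$. I would show this is an isomorphism by checking that in each bidegree the relations in $R$ already kill everything that must be killed: in the fundamental-times-fundamental bidegree this is exactly the content of Lemma \ref{Wmdec} together with the dictionary above, i.e. the quadratic part of $R$ coincides with the quadratic part of the \cite{FM2} ideal; and since \cite{FM2} proves that ideal is generated in quadratic degree, we are done once the quadratic parts agree. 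So the endgame is: (i) quadratic relations of $R$ $\subseteq$ quadratic relations of \cite{FM2} is clear because both vanish in $\gW^*$ and $\gW^*_{\om_{i_1}+\om_{i_2}}$ realises all of them; (ii) the reverse inclusion, that every \cite{FM2} quadratic relation is obtained from Theorem \ref{descr} for a suitable $s$, via the tableau $T$ of Lemma \ref{T} with $s=k(I,J)$.

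The main obstacle I expect is step (ii): producing, for each \cite{FM2} relation indexed by $(I,J)$, an actual element $\sum_l \tilde A^l_{(m_{i_1,i_2}-s)}\tilde B^l$ lying in $G_{<(\om_{i_1}+\om_{i_2})}$ whose associated vanishing series from Corollary \ref{AB} is that relation. This requires knowing that the non-semistandard leading term of the \cite{FM2} relation at level $k(I,J)$ is exactly captured by the drop in the filtration $G_\bullet$ at shift $s=k(I,J)$ — i.e. that $\tilde A_{(m_{i_1,i_2}-s)}\tilde B\in G_{<(\om_{i_1}+\om_{i_2})}$ precisely when the $\fg$-component generated sits in degree $\ge s$ in $W_{\om_{i_1}+\om_{i_2}}$. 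This is where Lemma \ref{Wmdec}, the uniqueness remark after Lemma \ref{T}, and the explicit form of the vertex operators \eqref{VO} all have to be combined; the bookkeeping of which tableaux survive at which filtration level is the delicate part, but it is ultimately a finite combinatorial check matching the two column-straightening descriptions.
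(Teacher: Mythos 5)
Your proposal follows essentially the same route as the paper: reduce via Corollary 3.22 and Proposition 2.9 a) of \cite{FM2} to matching quadratic parts, use Lemma \ref{Wmdec} to locate the isotypic components $V^*_{\om_{i+l}+\om_{j-l}}$ in the kernel of the maps $(A,B)\mapsto [A_{(m_{i,j}-s)}B]$, and use Lemma \ref{T} to realise each \cite{FM2} relation inside the relations of Theorem \ref{descr}. The ``delicate part'' you flag at the end is resolved in the paper exactly as you suggest: the relation attached to the tableau of Lemma \ref{T} is a highest weight vector of $V^*_{\om_{i+l}+\om_{j-l}}\subset V^*_{\om_i}\T V^*_{\om_j}$, and $\fg$-equivariance of both families of relations then gives the full isotypic component, so no further combinatorial bookkeeping is needed.
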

\begin{proof}
Corollary 3.22 and Proposition 2.9 a), \cite{FM2} state that the ideal of relations of $\gW^*$
is generated by its quadratic part and the generating set of relations can be described as follows:
given a non semi-standard two-column Young tableaux $T=(I,J)$ of shape $\om_i+\om_j$ one produces a relation
in the coordinate ring $\bigoplus_{\la\in P_+} V_\la^*$ of the form $R(I,J)=X_IX_J+\sum_{I',J'} \pm X_{I'}X_{J'}$
(here the Pl\"ucker coordinates $X_I$ are considered as elements of $V^*_{\om_{l(I)}}$).
In other words, $R(I,J)$ belongs to the kernel of the map $V^*_{\om_i}\T V^*_{\om_j}\to V^*_{\om_i+\om_j}$.
Now the relation $R(I,J)$ produces $k(I,J)$ series of relations of the form
\begin{equation}\label{ideal}
\frac{\partial_s X_I(z)}{\partial z^s}X_J(z)+\sum_{I',J'} \pm X_{I'}(z)X_{J'}(z),\ s<k(I,J),
\end{equation}
where $X_I(z)=\sum_{k\ge 0} (X_It^{-k})z^{k}$ and $X_It^{-k}$ is considered as an element of
$W^*_{\om_{l(I)}}[t^{-1}]\simeq \gW^*_{\om_{l(I)}}$.
Corollary 3.22 and Proposition 2.9 a), \cite{FM2} state that the coefficients of the relations above generate
the ideal of relations in $\gW^*$.

For a positive integer $l$ we consider a map
\[
\varphi_l: W^*_{\om_i}\T W^*_{\om_j}\to\gW^*_{\om_i+\om_j},\ \varphi_l (A,B)=[A_{(m_{i,j}-l)}B]
\]
where as usual the notation $[A_{(m_{i,j}-l)}B]$ is used to denote the class of the element $A_{(m_{i,j}-l)}B$
in ${\rm gr} G_{\om_i+\om_j}$.
We are interested in the kernel of $\varphi_l$. Let $\gW^*_{\om_i+\om_j}=\sum_{s\ge 0} \gW^*_{\om_i+\om_j}[s]$
be the decomposition with respect to the $q$-grading. Then $\varphi_l$ is a $\fg$-homomorphism from
$V^*_{\om_i}\T V^*_{\om_j}$ to $\gW^*_{\om_i+\om_j}$ thanks to Corollary \ref{26} (recall $W_{\om_i}\simeq V_{\om_i}$).
Thanks to Lemma \ref{Wmdec} the decomposition of the $\fg$-module $\gW^*_{\om^*_i+\om^*_j}[s]$
into irreducible components  does not contain direct summands of the form $V^*_{\om_{i+l}+\om_{j-l}}$
for $l>s$. We conclude that
\begin{equation}\label{kernel}
\ker \varphi_l\supset V^*_{\om_{i+l}+\om_{j-l}} \oplus V^*_{\om_{i+l+1}+\om_{j-l-1}}\oplus\dots \oplus V^*_{\om_{i+\min(j,r+1-i)}
+ \om_{j-\min(j,r+1-i)}}.
\end{equation}
According to Theorem \ref{descr} for each $l\le \min(j,r+1-i)$ we obtain the relations of the form
\begin{equation}\label{vrel}
\sum_m (\partial_z^{l-1}A^m(z))B^m(z), \ s\le l
\end{equation}
for each element $\sum_m A^m\T B^m\in V^*_{\om_{i+l}+\om_{j-l}} \subset V^*_{\om_i}\T V^*_{\om_j}$.

We claim that all the relations \eqref{ideal} can be obtained in this way. Indeed, let us consider relation
\eqref{ideal} attached to the two column tableau from Lemma \ref{T}. Then this relation is a highest weight vector
of $V^*_{\om_{i+l}+\om_{j-l}}\subset V^*_{\om_i}\T V^*_{\om_j}$ and thus is contained in \eqref{vrel}.
Now we use the $\fg$-invariance of the relations \eqref{ideal} and \eqref{vrel}.
\end{proof}

\section*{Acknowledgments}
The work on Sections 1,2,3 was partially supported by the Russian Academic Excellence Project '5-100'.
The work on Sections 4,5,6 was partially supported by the grant RSF-DFG 16-41-01013.

\end{document}